\documentclass[11pt]{article}

\usepackage[utf8]{inputenc}
\usepackage[T1]{fontenc}
\usepackage[english]{babel}
\usepackage{amsmath,amssymb,amsthm}
\usepackage{hyperref}
\usepackage{url}
\usepackage{tikz}
\usepackage{float}
\usepackage{array}
\usepackage{booktabs}
\usepackage{tabularx}
\usepackage{microtype}
\hypersetup{colorlinks=true,linkcolor=blue,citecolor=blue,urlcolor=blue,hypertexnames=false}

\providecommand{\keywords}[1]{\noindent\textbf{\textit{Keywords---}} #1}
\newcommand{\res}{\operatorname{res}}

\newcommand{\ceil}[1]{\left\lceil #1\right\rceil}
\newcommand{\floor}[1]{\left\lfloor #1\right\rfloor}

\newtheorem{theorem}{Theorem}[section]
\newtheorem{conjecture}[theorem]{Conjecture}
\newtheorem{corollary}[theorem]{Corollary}
\newtheorem{definition}[theorem]{Definition}
\newtheorem{lemma}[theorem]{Lemma}

\newtheorem{proposition}[theorem]{Proposition}
\newtheorem{remark}[theorem]{Remark}

\title{Annihilation, Independence, and Residue:\\
\large Sharp Matching Bounds for the Annihilation Gap\\ and a TxGraffiti Application}
\author{
        Ohr Kadrawi \\
        Department of Mathematics\\
        Ariel University\\
        Ariel 4070000, Israel\\
        orka@ariel.ac.il
        \and
        Vadim E. Levit \\
        Department of Mathematics\\
        Ariel University\\
        Ariel 4070000, Israel\\
        levitv@ariel.ac.il
}
\date{}

\begin{document}

\maketitle

\begin{abstract}
Let $G$ be a finite simple graph.  The annihilation number $a(G)$ is an efficiently computable upper bound on the independence number $\alpha(G)$.  We develop a sharp matching-number theory for the gap $a(G)-\alpha(G)$.  The strongest general theorem is the exact closed form
\[
        a(G)-\alpha(G)\leq 2\mu(G)+1-\ceil{\sqrt{6\mu(G)}}\qquad(\mu(G)\geq 1),
\]
and the bound is attained for every prescribed matching number.  We also prove sharp matching-dependent bounds for forests, bipartite graphs, and K\"onig--Egerv\'ary graphs, with equality constructions, equality certificates, and equality criteria.

Finally, we treat a TxGraffiti output as a machine-conjecture case study.  Using annihilating decompositions together with the classical Havel--Hakimi residue inequality $\res(G)\leq \alpha(G)$, we give an independent proof of the TxGraffiti annihilation-residue inequality
\[
        \alpha(G)\geq \frac{a(G)+\res(G)}{\Delta(G)}
\]
for every connected graph $G$ of order at least three, show that both hypotheses are necessary, and compare this proof with a recent Caro--Wei approach.  We also refine the Caro--Wei annihilation estimate by an explicit nonnegative slack term, identify its equality cases in degree-sequence form, and combine the refinement with our exact matching-number bound to obtain a combined computable bracket for the independence number and a Gupta--residue bound for the annihilation gap.
\end{abstract}

\keywords{annihilation number, independence number, matching number, K\"onig--Egerv\'ary graph, Havel--Hakimi residue, TxGraffiti}

\medskip
\noindent\textbf{Mathematics Subject Classification (2020)---} Primary 05C69; Secondary 05C70, 05C35, 05C85, 68T01.

\section{Introduction}

Throughout the paper all graphs are finite, simple, and undirected.  Standard graph terminology follows West \cite{West2001}.  For a graph $G$, let $V(G)$ and $E(G)$ denote its vertex and edge sets, and put $n(G)=|V(G)|$ and $m(G)=|E(G)|$.  We write $d_G(v)$, or simply $d(v)$, for the degree of a vertex $v$, and $\Delta(G)$ for the maximum degree of $G$.

An independent set is a set of pairwise non-adjacent vertices.  The independence number $\alpha(G)$ is the maximum cardinality of an independent set.  A matching is a set of pairwise disjoint edges, and the matching number $\mu(G)$ is the maximum cardinality of a matching.  We write $\tau(G)=n(G)-\alpha(G)$ for the vertex cover number; this identity follows because the complement of an independent set is a vertex cover, and conversely.  A graph $G$ is called a K\"onig--Egerv\'ary graph if
\[
        \alpha(G)+\mu(G)=n(G).
\]
This terminology and its matching-cover characterizations go back to the work of Deming, Gavril, and Sterboul \cite{Deming1979,Gavril1977,Sterboul1979}.  The classical K\"onig--Egerv\'ary theorem for bipartite graphs states that the maximum matching size equals the minimum vertex-cover size; we cite both original papers, by K\H{o}nig and by Egerv\'ary, published in the same 1931 volume of Matematikai \'es Fizikai Lapok \cite{Egervary1931,Koenig1931}.  Consequently, every bipartite graph is K\"onig--Egerv\'ary.

Let
\[
        d_1\leq d_2\leq \cdots \leq d_n
\]
be the degree sequence of $G$.  The annihilation number of $G$, introduced by Pepper \cite{Pepper2004,Pepper2009}, is
\[
        a(G)=\max\left\{k: \sum_{i=1}^k d_i\leq m(G)\right\}.
\]
Equivalently, an annihilating set is a set $A\subseteq V(G)$ such that $\sum_{v\in A}d(v)\leq m(G)$, and $a(G)$ is the maximum size of an annihilating set.  This equivalence follows because, among all $k$-vertex subsets, the sum of the $k$ smallest degrees is the minimum possible degree sum.  Thus, an annihilating set need not literally consist of the first $k$ vertices in a degree ordering; the ordered degree sequence merely computes the largest possible cardinality.  Every independent set $I$ is annihilating, because each edge of $G$ has at most one endpoint in $I$, and hence
\[
        \sum_{v\in I}d(v)=e(I,V(G)\setminus I)\leq m(G).
\]
Consequently,
\[
        \alpha(G)\leq a(G).
\]
The present paper studies how large the difference $a(G)-\alpha(G)$ can be under natural structural hypotheses.
The work grew out of our earlier arXiv preprint \cite{KadrawiLevit2023}, which initiated the annihilation-gap program by proving matching-number bounds for trees, bipartite graphs, and K\"onig--Egerv\'ary graphs using annihilating decompositions.  Here we revisit that program, sharpen the statements to their correct integral and exact forms, add equality certificates and new extremal families, prove an exact arbitrary-graph matching-number bound, and apply the resulting theory to a TxGraffiti annihilation-residue conjecture.

Automated conjecturing supplies an additional motivation.  Fajtlowicz's Graffiti program generated graph-theoretic inequalities by filtering table-true relations among invariants \cite{Fajtlowicz1988}; see also DeLaVi\~na's historical account and the modern Dalmatian-heuristic treatment of Larson and Van Cleemput \cite{DeLaVina2005,LarsonVanCleemput2016}.  TxGraffiti continues this line with finite snapshot tables, optimization-based fitting, and Dalmatian-style redundancy filtering \cite{Davila2026}.  In this terminology, the residue inequality used below is part of the older Graffiti tradition, while the final theorem gives an annihilating-decomposition proof of a TxGraffiti-generated inequality involving $a(G)$, $\res(G)$, and $\Delta(G)$.

The annihilation number has been studied as a computable upper bound for independence and as a companion parameter for domination-type invariants; see, for example, \cite{Amjadi2015,AramKhoeilarSheikholeslamiVolkmann2018,BujtasJakovac,DehgardiNorouzianSheikholeslami2013,DehgardiSheikholeslamiKhodkar2013,DehgardiSheikholeslamiKhodkar2014,DesormeauxHaynesHenning2013,HuaXuHua2023,Jakovac2019,NingLuWang2019}.  Related degree-sequence viewpoints for independence and nullity appear in \cite{GentnerHenningRautenbach,JaumeaMolina2018}.  The equality problem $a(G)=\alpha(G)$ has also received sustained attention, including work of Larson and Pepper \cite{LarsonPepper2011}, Levit and Mandrescu \cite{LevitMandrescu2020,LevitMandrescu2022}, Hiller \cite{Hiller2023}, and Rauch and Rautenbach \cite{RauchRautenbach2023}.

Our final theorem connects the annihilation number with a second Graffiti-inspired invariant, the Havel--Hakimi residue.  Favaron, Mah\'eo and Sacl\'e \cite{FavaronMaheoSacle1991} introduced and studied the residue and proved that it is bounded above by $\alpha(G)$; Griggs and Kleitman \cite{GriggsKleitman1994} later gave a short proof.  Recently, TxGraffiti generated the conjecture
\[
        \alpha(G)\geq \frac{a(G)+\res(G)}{\Delta(G)}
\]
for connected graphs of order at least three \cite{Davila2026}.  During the preparation of this manuscript, Gupta \cite{Gupta2026} proved the same conjecture independently by a different and stronger route, using a Caro--Wei bound for the annihilation number.  We retain the result here because our proof is independent and follows from the annihilating-decomposition and matching-number methods developed in this paper.  We also use Gupta's estimate together with our exact matching-number theorem to obtain a combined computable two-sided bracket for the independence number.  A comparison appears in Section~\ref{section:txgraffiti}.

\subsection*{Contribution and novelty calibration}
The first three structural classes below were already central in the earlier Kadrawi--Levit preprint \cite{KadrawiLevit2023}.  The present version refines that starting point and adds the general matching theorem, equality-structure information, and the TxGraffiti application.  The paper is organized around five main contributions.  First, the arbitrary-graph extremal problem with fixed matching number is solved in a closed arithmetic form.  Second, K\"onig--Egerv\'ary graphs receive a sharp matching-number bound with equality for every prescribed matching number.  Third, the forest estimate is sharpened to its correct integral form and is shown to be best possible for every matching number.  Fourth, the bipartite non-tree estimate is stated in sharp integer form and is accompanied by equality examples, including a connected six-vertex extremal graph and an infinite connected equality family.  Fifth, the TxGraffiti annihilation-residue inequality is proved by an independent annihilating-decomposition argument and presented as a machine-conjecture-to-theorem case study.  In the same section, Gupta's Caro--Wei estimate is sharpened to an exact slack identity, its equality cases are identified in degree-sequence form, and the refinement is combined with our exact matching theorem to give a combined polynomial-time computable bracket for $\alpha(G)$ and a Gupta--residue upper bound on the same annihilation gap studied throughout the paper.  A separate sequel studies the fixed-matching $K_4$-free problem by finite matched cores and bounded blow-ups of type graphs.

\subsection*{Proof architecture}
The proofs are organized around one recurring device.  Given an annihilating decomposition $V(G)=A\dot\cup B$, the annihilation condition gives
\[
        \sum_{v\in A}d(v)\leq m(G),
\]
Consequently, Lemma~\ref{lem:EALEB} gives the edge-budget inequality
\[
        e(A)\leq e(B).
\]
The low-degree side $A$ controls the gap because
\[
        a(G)-\alpha(G)=|A|-\alpha(G)
        \leq |A|-\alpha(G[A])=\tau(G[A]),
\]
while the complement $B$ limits how dense $G[A]$ can be through $e(A)\leq e(B)$.  Thus, $A$ supplies the vertex-cover term in the gap, and $B$ supplies the edge budget that permits $A$ to be annihilating.  The paper uses this device in five increasingly global ways:
\begin{enumerate}
\item for forests, the acyclic structure bounds $e(A)$ and $e(B)$ sharply;
\item for bipartite graphs, the K\"onig--Egerv\'ary theorem converts $|A|-\alpha(G[A])$ into a matching quantity;
\item for K\"onig--Egerv\'ary graphs, the identity $n=\alpha+\mu$ turns bounds on $a$ into matching-number estimates;
\item for arbitrary graphs, the same inequalities reduce the problem to a one-variable optimization in $c=|B|$, giving the closed form displayed in Theorem A;
\item for the TxGraffiti inequality, the decomposition gives the auxiliary estimate
\[
        a(G)\leq (\Delta(G)-1)\alpha(G),
\]
and the residue inequality $\res(G)\leq\alpha(G)$ completes the proof.
\end{enumerate}
Thus, the TxGraffiti application is not isolated; it is the endpoint of the same annihilating-decomposition method used throughout the paper.

\subsection*{Main theorem overview}
For quick reference, we record the strongest results of the paper.

\medskip
\noindent\textbf{Theorem A: exact arbitrary-graph matching bound.}  If $G$ is any finite simple graph with $\mu(G)\geq 1$, then
\[
        a(G)-\alpha(G)\leq 2\mu(G)+1-\ceil{\sqrt{6\mu(G)}}.
\]
This bound is exact for every prescribed positive matching number.

\medskip
\noindent\textbf{Theorem B: exact K\"onig--Egerv\'ary bound.}  If $G$ is K\"onig--Egerv\'ary, then
\[
        a(G)-\alpha(G)\leq \mu(G)-\ceil{\frac{\sqrt{8\mu(G)+1}-1}{2}}.
\]
This bound is exact for every prescribed value of $\mu(G)$; for $\mu(G)\geq 2$ it is attained by connected non-bipartite K\"onig--Egerv\'ary graphs.

\medskip
\noindent\textbf{Theorem C: sharp forest bound.}  If $F$ is a forest with at least one edge, then
\[
        a(F)-\alpha(F)\leq \floor{\frac{\mu(F)-1}{2}},
\]
and equality is attained by a tree for every prescribed positive matching number.

\medskip
\noindent\textbf{Theorem D: sharp bipartite bound, sharp on non-trees.}  If $G$ is bipartite, then
\[
        a(G)-\alpha(G)\leq \floor{2+\mu(G)-2\sqrt{1+\mu(G)}}.
\]
Equivalently,
\[
        a(G)-\alpha(G)\leq 2+\mu(G)-\ceil{2\sqrt{1+\mu(G)}}.
\]
These two forms are the same because $2+\mu(G)$ is an integer.  The equality mechanism for the non-forest case is certified structurally; equality occurs already on six vertices among connected bipartite non-trees, and an infinite connected equality family is given.

\medskip
\noindent\textbf{Theorem E: TxGraffiti annihilation-residue inequality.}  If $G$ is connected and $n(G)\geq 3$, then
\[
        \alpha(G)\geq \frac{a(G)+\res(G)}{\Delta(G)}.
\]
Both hypotheses are necessary.

\subsection*{Dependency map and boundary checks}
The main proofs use the following dependency chain.  This table is included to make the logical structure easy to audit.
\begin{center}
\small
\begin{tabularx}{\textwidth}{@{}p{3.4cm}X@{}}
\toprule
Input & Used for \\
\midrule
Sorted-degree definition of $a(G)$ and annihilating sets & The basic decomposition $V(G)=A\dot\cup B$ and the edge-budget inequality $e(A)\le e(B)$. \\
Lemma~\ref{lem:EALEB} & Forest, bipartite, K\"onig--Egerv\'ary and general matching bounds. \\
K\"onig--Egerv\'ary theorem for bipartite graphs & Forest and bipartite estimates. \\
Lemmas~\ref{lem:tau_edges_matching} and~\ref{lem:e_minus_matching_complete} & Exact arbitrary-graph fixed-matching bound. \\
Brooks' theorem and $\operatorname{res}(G)\le\alpha(G)$ & TxGraffiti application. \\
\bottomrule
\end{tabularx}
\end{center}

The following small graphs are the boundary tests that the statements must pass.
\begin{center}
\small
\begin{tabularx}{\textwidth}{@{}p{2.3cm}X@{}}
\toprule
Graph & Role in the paper \\
\midrule
$K_1$ & Shows why the real-valued forest theorem must exclude edgeless forests. \\
$K_2$ & Shows the hypothesis $n(G)\ge3$ is necessary in the TxGraffiti theorem. \\
$P_3$ & Equality witness for the TxGraffiti theorem. \\
$C_3$ & Small non-bipartite equality witness in the TxGraffiti theorem and the base odd-cycle case in Lemma~\ref{lem:a_delta_alpha}. \\
$C_3\cup K_2$ & Shows connectedness is necessary in the TxGraffiti theorem. \\
$K_4$ & Small sharp witness for both the general matching bound at $\mu=2$ and the TxGraffiti theorem. \\
\bottomrule
\end{tabularx}
\end{center}

\subsection*{Sharpness guide}
The following table records the main sharpness and equality witnesses used later.  It is intended as a quick map for the reader; the constructions and proofs appear in the relevant sections.  In the Bound column, $\Gamma(\mu)$, $s(\mu)$ and $f_{\mathrm{bip}}(\mu)$ denote the expressions displayed in the theorem overview above.
\begin{table}[H]
\centering
\caption{Sharpness and equality witnesses for the main theorems.}
\label{tab:sharpness-guide}
\scriptsize
\renewcommand{\arraystretch}{1.15}
\begin{tabularx}{\linewidth}{p{0.20\linewidth}p{0.34\linewidth}X}
\toprule
Result & Bound & Sharpness or equality witnesses \\ \midrule
General graphs & $a-\alpha\leq \Gamma(\mu)$ & Exact for every $\mu$ by the clique-budget construction in Theorem~\ref{thm:general_exact}; Proposition~\ref{prop:general_location} gives necessary equality structure. \\
K\"onig--Egerv\'ary graphs & $a-\alpha\leq \mu-s(\mu)$ & Exact for every $\mu$; for $\mu\geq2$ equality occurs in connected non-bipartite K\"onig--Egerv\'ary graphs. \\
Forests & $a-\alpha\leq \lfloor(\mu-1)/2\rfloor$ & The trees $T_s$ attain equality for every positive $s=\mu$. \\
Bipartite graphs & $a-\alpha\leq f_{\mathrm{bip}}(\mu)$ & The bound holds for all bipartite graphs and is sharp on connected bipartite non-trees; equality occurs on the six-vertex graph with edges $02,03,04,12,13,35$ and on the family $H_r$. \\
TxGraffiti application & $\alpha\geq(a+\res)/\Delta$ & Equality occurs for $P_3,C_3,C_4,K_4$; $K_2$ and $C_3\cup K_2$ show that the hypotheses are necessary. \\ \bottomrule
\end{tabularx}
\end{table}

\section{Preliminaries}

For $X\subseteq V(G)$, write $G[X]$ for the subgraph induced by $X$, and write $e(X)=m(G[X])$.  If $X,Y\subseteq V(G)$ are disjoint, write $e(X,Y)$ for the number of edges with one endpoint in $X$ and one endpoint in $Y$.

\begin{definition}
An \emph{annihilating decomposition} of a graph $G$ is a partition $\langle A,B\rangle$ of $V(G)$ such that $A$ is a maximum annihilating set and $B=V(G)\setminus A$.  Thus, $|A|=a(G)$ and $|B|=n(G)-a(G)$.
\end{definition}

\begin{lemma}\label{lem:EALEB}
If $\langle A,B\rangle$ is an annihilating decomposition of $G$, then
\[
        e(A)\leq e(B).
\]
\end{lemma}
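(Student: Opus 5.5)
The plan is a double count of the degree sum over $A$, compared with a count of all edges of $G$ split by location. Because $\langle A,B\rangle$ partitions $V(G)$, every edge lies in exactly one of three classes: inside $A$, inside $B$, or crossing between them. This gives
\[
        m(G)=e(A)+e(A,B)+e(B).
\]
Summing degrees over $A$ counts each edge inside $A$ twice and each crossing edge once, so the handshake identity restricted to $A$ yields
\[
        \sum_{v\in A}d(v)=2e(A)+e(A,B).
\]

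Next I will use that $A$ is annihilating, which holds since it is a maximum annihilating set. This means $\sum_{v\in A}d(v)\leq m(G)$. Substituting the two identities gives
\[
        2e(A)+e(A,B)\leq e(A)+e(A,B)+e(B).
\]
Cancelling $e(A)+e(A,B)$ from both sides leaves $e(A)\leq e(B)$, which is the claim.

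There is no real obstacle here; the only point needing care is the correct count of crossing edges. They contribute exactly once to the degree sum over $A$ and exactly once to $m(G)$, so they cancel. The argument also shows that maximality of $A$ is not needed: the inequality holds for every annihilating set $A$ with complement $B$.
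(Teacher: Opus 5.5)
Your proof is correct and is essentially the paper's argument. The paper first rewrites the annihilating condition as $\sum_{v\in A}d(v)\leq\sum_{v\in B}d(v)$ and then cancels $e(A,B)$ between $2e(A)+e(A,B)$ and $2e(B)+e(A,B)$, whereas you compare $2e(A)+e(A,B)$ directly with $m(G)=e(A)+e(A,B)+e(B)$; this is the same double count. Your remark that only the annihilating property of $A$, and not its maximality, is needed holds for the paper's proof as well.
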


\begin{proof}
Since $A$ is annihilating, $\sum_{v\in A}d(v)\leq m(G)$.  Hence
\[
        \sum_{v\in B}d(v)=2m(G)-\sum_{v\in A}d(v)\geq m(G),
\]
so $\sum_{v\in A}d(v)\leq \sum_{v\in B}d(v)$.  But
\[
        \sum_{v\in A}d(v)=2e(A)+e(A,B),\qquad
        \sum_{v\in B}d(v)=2e(B)+e(A,B).
\]
Cancelling $e(A,B)$ gives $e(A)\leq e(B)$.
\end{proof}

\begin{lemma}\label{lem:bipartiteA}
Let $G$ be bipartite, and let $\langle A,B\rangle$ be an annihilating decomposition of $G$.  Then
\[
        a(G)-\alpha(G)\leq e(A).
\]
\end{lemma}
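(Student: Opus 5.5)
The plan is to compare $\alpha(G)$ with $\alpha(G[A])$ and then bound the vertex cover number of the bipartite induced subgraph $G[A]$ by its number of edges.

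First, every independent set of $G[A]$ is also an independent set of $G$. Hence $\alpha(G)\geq \alpha(G[A])$. Since $|A|=a(G)$ by the definition of an annihilating decomposition, we get
\[
        a(G)-\alpha(G)\leq |A|-\alpha(G[A])=\tau(G[A]),
\]
where we use the identity $\tau(H)=n(H)-\alpha(H)$ from the introduction, applied to $H=G[A]$.

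Second, $G[A]$ is an induced subgraph of a bipartite graph, so it is bipartite. The K\"onig--Egerv\'ary theorem then gives $\tau(G[A])=\mu(G[A])$. A matching is a set of edges, so $\mu(G[A])\leq m(G[A])=e(A)$. Chaining these relations gives $a(G)-\alpha(G)\leq e(A)$.

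There is no serious obstacle here; the only point to state carefully is the first inequality, $\alpha(G)\geq\alpha(G[A])$. The K\"onig--Egerv\'ary step is not strictly needed. One can instead observe that choosing one endpoint of each edge of $G[A]$ gives a vertex cover of $G[A]$ of size at most $e(A)$. So $\tau(G[A])\leq e(A)$ holds for any graph, and bipartiteness only sharpens this to $\mu(G[A])$. I would present the K\"onig--Egerv\'ary version, since the stronger intermediate bound $a(G)-\alpha(G)\leq\mu(G[A])$ is what the later bipartite optimization will use.
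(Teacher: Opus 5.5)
Your proof is correct and is essentially the paper's argument. The paper also uses $\alpha(G)\geq\alpha(G[A])$, applies the K\"onig--Egerv\'ary theorem to the bipartite graph $G[A]$ to get $|A|-\alpha(G[A])=\mu(G[A])$, and then bounds $\mu(G[A])\leq e(A)$. Your side remark is also right: $\tau(G[A])\leq e(A)$ holds in every graph, so the displayed inequality does not actually need bipartiteness.
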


\begin{proof}
The graph $G[A]$ is bipartite.  Hence, by the K\"onig--Egerv\'ary theorem for bipartite graphs \cite{Egervary1931,Koenig1931,West2001},
\[
        |A|=\alpha(G[A])+\mu(G[A]).
\]
Since $\alpha(G)\geq \alpha(G[A])$ and $\mu(G[A])\leq e(A)$, we obtain
\[
        a(G)-\alpha(G)=|A|-\alpha(G)
        \leq |A|-\alpha(G[A])
        =\mu(G[A])
        \leq e(A).
\]
\end{proof}

\begin{lemma}\label{lem:KE_mu_le_alpha}
If $G$ is K\"onig--Egerv\'ary, then $\mu(G)\leq \alpha(G)$.
\end{lemma}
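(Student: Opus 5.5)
The inequality $\mu(G)\leq\alpha(G)$ for a K\"onig--Egerv\'ary graph will follow from comparing two expressions for $n(G)$: the defining identity $\alpha(G)+\mu(G)=n(G)$, and a general upper bound on $\mu(G)$ in terms of $\alpha(G)$ and $n(G)$.

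The main step is to show that in any graph the vertices covered by a matching can contribute at most one endpoint per edge to an independent set. Let $M$ be a maximum matching, so $|M|=\mu(G)$. The $2\mu(G)$ endpoints of its edges are distinct vertices, hence
\[
2\mu(G)\leq n(G).
\]
Substituting $n(G)=\alpha(G)+\mu(G)$ gives $2\mu(G)\leq \alpha(G)+\mu(G)$, that is, $\mu(G)\leq\alpha(G)$.

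An equivalent route uses the vertex cover number. We have $\tau(G)=n(G)-\alpha(G)$, and the K\"onig--Egerv\'ary identity says exactly that $\tau(G)=\mu(G)$. Any vertex cover must meet each of the $\mu(G)$ disjoint edges of $M$ in a distinct vertex, so $\tau(G)\geq\mu(G)$ always. That fact alone does not give the claim, so the vertex-count argument above is the cleaner one. There is no real obstacle: the only point to state explicitly is that the edges of a matching are pairwise disjoint, which is what justifies $2\mu(G)\leq n(G)$.
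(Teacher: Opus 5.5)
Your argument is correct and is essentially the paper's proof: both combine $2\mu(G)\leq n(G)$ with the K\"onig--Egerv\'ary identity $n(G)=\alpha(G)+\mu(G)$ to get $\mu(G)\leq n(G)-\mu(G)=\alpha(G)$. Your opening sentence about independent sets and the vertex-cover paragraph are not needed and can be dropped.
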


\begin{proof}
Since every matching has at most $n(G)/2$ edges and $\alpha(G)=n(G)-\mu(G)$, we have
\[
        \mu(G)\leq \frac{n(G)}2\leq n(G)-\mu(G)=\alpha(G).
\]
\end{proof}

\begin{lemma}\label{lem:a_nminus1}
Let $G$ be a graph with at least one edge.  Then $a(G)=n(G)-1$ if and only if all edges of $G$ are incident with one vertex.  Equivalently, $G$ is a star together with possibly some isolated vertices.
\end{lemma}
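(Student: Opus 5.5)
The plan is to prove both directions of Lemma~\ref{lem:a_nminus1} from the sorted-degree definition of $a(G)$, using Lemma~\ref{lem:EALEB} for the harder direction. Throughout, $G$ has at least one edge, so $m=m(G)\geq 1$ and $d_n\geq 1$.

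For the ``if'' direction, suppose every edge is incident with a vertex $v$. Then $d(v)=m$, and every other vertex has degree at most $1$. The vertices other than $v$ have total degree $2m-d(v)=m$. So the set $V(G)\setminus\{v\}$ is annihilating, which gives $a(G)\geq n-1$. On the other hand, the whole vertex set has degree sum $2m>m$, so $a(G)\leq n-1$. Hence $a(G)=n-1$.

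For the ``only if'' direction, suppose $a(G)=n-1$, and take an annihilating decomposition $\langle A,B\rangle$ with $|B|=1$, say $B=\{v\}$. Since $G[B]$ has no edges, $e(B)=0$. Lemma~\ref{lem:EALEB} then gives $e(A)\leq e(B)=0$, so $A$ is independent. Therefore every edge of $G$ has an endpoint outside $A$, that is, every edge is incident with $v$. Hence $G$ is a star centred at $v$ together with possibly some isolated vertices. The case where the star is a single edge $K_2$ is included, since its centre can be taken to be either endpoint.

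The equivalence of ``all edges are incident with one vertex'' and ``a star plus isolated vertices'' is immediate: the edges at $v$ form a star $K_{1,d(v)}$, and the remaining vertices are isolated. There is no real obstacle in this argument. The only care needed is to note that the maximum annihilating set of size $n-1$ leaves exactly one vertex in $B$, which makes $e(B)=0$ automatic.
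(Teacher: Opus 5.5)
Your proof is correct and is essentially the paper's argument. The paper writes the condition as $\sum_{i=1}^{n-1}d_i\le m$, i.e.\ $d_n\ge m$, and then uses $m=d(v)+m(G-v)$ to get $m(G-v)=0$. You reach the same degree-sum comparison through Lemma~\ref{lem:EALEB} applied with $B=\{v\}$ and $e(B)=0$, which yields $e(A)=m(G-v)=0$.
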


\begin{proof}
Let $d_1\leq \cdots \leq d_n$ be the degree sequence of $G$.  We have $a(G)=n(G)-1$ if and only if
\[
        \sum_{i=1}^{n-1}d_i\leq m(G).
\]
Since $\sum_{i=1}^{n}d_i=2m(G)$, this is equivalent to $d_n\geq m(G)$.  If $v$ has degree $d_n$, then $m(G)=d(v)+m(G-v)$, so $d_n\geq m(G)$ forces $m(G-v)=0$.  Thus, every edge is incident with $v$.  The converse is immediate from the degree sequence of a star plus isolated vertices.
\end{proof}

\section{Forests and trees}\label{section:trees}

The edgeless forest is an exceptional boundary case for a matching-number formulation: if $F$ is edgeless, then $a(F)=\alpha(F)=n(F)$ and $\mu(F)=0$.  In particular, the one-vertex tree would make the real-valued right-hand side equal to $-1/2$.  Therefore, the correct statement starts with forests having at least one edge.

\begin{theorem}\label{thm:tree}
If $F$ is a forest with at least one edge, then
\[
        a(F)-\alpha(F)\leq \frac{\mu(F)-1}{2}.
\]
Consequently,
\[
        a(F)-\alpha(F)\leq \floor{\frac{\mu(F)-1}{2}}.
\]
\end{theorem}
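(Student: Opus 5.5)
The plan is to work with an annihilating decomposition $\langle A,B\rangle$ of $F$ and to bound the gap in two different ways: once using $|B|$ through the K\"onig--Egerv\'ary identity for $F$, and once using the forest structure of $F[B]$. Averaging the two bounds will then give the result.

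\emph{Step 1 (gap in terms of $|B|$).} A forest is bipartite, so by the K\"onig--Egerv\'ary theorem $\alpha(F)=n(F)-\mu(F)$. Since $|A|=a(F)=n(F)-|B|$, this gives the exact identity
\[
a(F)-\alpha(F)=\mu(F)-|B|.
\]

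\emph{Step 2 (gap in terms of $e(B)$).} Lemma~\ref{lem:bipartiteA} gives $a(F)-\alpha(F)\le e(A)$, and Lemma~\ref{lem:EALEB} gives $e(A)\le e(B)$.

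\emph{Step 3 (bounding $e(B)$).} First I check that $B\neq\emptyset$. Since $F$ has an edge, the sum of all degrees is $2m(F)>m(F)$, so $V(F)$ is not annihilating. Hence $a(F)\le n(F)-1$ and $|B|\ge1$. The induced subgraph $F[B]$ is a forest on $|B|\ge1$ vertices, so $e(B)\le |B|-1$. Combining with Step~2 gives
\[
a(F)-\alpha(F)\le |B|-1.
\]

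\emph{Step 4 (averaging).} Adding the bound from Step~1 to the bound from Step~3 gives
\[
2\bigl(a(F)-\alpha(F)\bigr)\le \mu(F)-1,
\]
which is the real-valued statement. The floor version follows immediately, because the left-hand side $a(F)-\alpha(F)$ is an integer.

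The only point needing care is that $|B|\ge1$. The inequality $e(B)\le|B|-1$ fails for empty $B$, and this is exactly where the hypothesis that $F$ has an edge enters, matching the paper's $K_1$ boundary remark. I do not expect any further obstacle.
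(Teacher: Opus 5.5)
Your proposal is correct and is essentially the paper's proof. It uses the same annihilating decomposition and the same chain $a(F)-\alpha(F)\le e(A)\le e(B)\le |B|-1$ from Lemmas~\ref{lem:bipartiteA} and~\ref{lem:EALEB}, and combines it with the K\"onig--Egerv\'ary identity $|B|=\mu(F)-(a(F)-\alpha(F))$ to reach $2(a(F)-\alpha(F))\le\mu(F)-1$. Your justification of $B\neq\emptyset$ via $2m(F)>m(F)$ just spells out the paper's remark that $a(F)<n(F)$.
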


\begin{proof}
Let $\langle A,B\rangle$ be an annihilating decomposition of $F$.  Since $F$ has at least one edge, $a(F)<n(F)$, and hence, $B\neq\emptyset$.  By Lemmas \ref{lem:EALEB} and \ref{lem:bipartiteA},
\[
        a(F)-\alpha(F)\leq e(A)\leq e(B).
\]
The graph $F[B]$ is a forest on the nonempty vertex set $B$, so $e(B)\leq |B|-1$.  Therefore,
\[
        a(F)-\alpha(F)\leq |B|-1=n(F)-a(F)-1.
\]
Since every forest is bipartite, the K\"onig--Egerv\'ary theorem gives $n(F)=\alpha(F)+\mu(F)$.  Thus
\[
        a(F)-\alpha(F)
        \leq \alpha(F)+\mu(F)-a(F)-1,
\]
and hence
\[
        2(a(F)-\alpha(F))\leq \mu(F)-1.
\]
This proves the asserted inequality.  The floor version follows because the left-hand side is an integer.
\end{proof}

\begin{proposition}[Forest equality certificate]\label{prop:forest_equality_certificate}
Let $F$ be a forest with at least one edge, let $\langle A,B\rangle$ be an annihilating decomposition, and put $x=a(F)-\alpha(F)$.  If equality holds in the real-valued bound
\[
        x=\frac{\mu(F)-1}{2},
\]
then $\mu(F)$ is odd and all inequalities in the proof of Theorem \ref{thm:tree} are tight.  In particular,
\[
        x=e(A)=e(B)=|B|-1,
\]
$F[B]$ is a tree, and $F[A]$ is a matching of size $x$ together with isolated vertices.
\end{proposition}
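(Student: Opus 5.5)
The plan is to rerun the chain of inequalities from the proof of Theorem~\ref{thm:tree} and note that equality at the end forces equality at every step. That chain reads
\[
        x=|A|-\alpha(F)\leq |A|-\alpha(F[A])=\mu(F[A])\leq e(A)\leq e(B)\leq |B|-1,
\]
where the equalities come from Lemma~\ref{lem:bipartiteA}, Lemma~\ref{lem:EALEB} and the forest edge bound. After this it uses $|B|=n(F)-a(F)=\alpha(F)+\mu(F)-a(F)=\mu(F)-x$ to get $2x\leq \mu(F)-1$. If $x=(\mu(F)-1)/2$, the final inequality $x\leq |B|-1=\mu(F)-x-1$ holds with equality. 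Since the chain is a sequence of $\leq$ relations between its first and last terms, every intermediate inequality must then be an equality. This gives $x=\mu(F[A])=e(A)=e(B)=|B|-1$, and also $\alpha(F)=\alpha(F[A])$.

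Parity is immediate. Since $x$ is an integer, $\mu(F)=2x+1$ is odd.

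Next I read off the structure of the two sides. First, $F[B]$ is a forest on the nonempty set $B$ with $e(B)=|B|-1$ edges, so it has exactly one component and is therefore a tree. (Recall $B\neq\emptyset$ because $F$ has an edge.) Second, $e(A)=\mu(F[A])$ says that the forest $F[A]$ has a matching using all of its edges. So no two edges of $F[A]$ share a vertex, which means every vertex of $F[A]$ has degree at most $1$. Hence $F[A]$ is a matching of size $e(A)=x$ together with isolated vertices.

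No step looks genuinely hard. The only care needed is the bookkeeping that ties $|B|$ to $\mu(F)$ through the K\"onig--Egerv\'ary identity, which makes the last step of the chain an equality. I would also check the degenerate case $x=0$, where $\mu(F)=1$: then $|B|=1$, $F[B]$ is a single vertex, and $F[A]$ is edgeless. This case is consistent with the statement.
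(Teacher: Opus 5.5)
Your proposal is correct and follows essentially the same route as the paper. Like the paper, you rerun the chain $x\le \mu(F[A])\le e(A)\le e(B)\le |B|-1$ with $|B|=\mu(F)-x$, note that equality at the end forces equality throughout, and then read off that $F[B]$ is a tree and $F[A]$ is a matching plus isolated vertices; you additionally make explicit the parity claim $\mu(F)=2x+1$, which the paper leaves implicit.
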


\begin{proof}
The proof of Theorem \ref{thm:tree} gives
\[
        x\leq e(A)\leq e(B)\leq |B|-1
\]
and, since $F$ is K\"onig--Egerv\'ary,
\[
        |B|=n(F)-a(F)=\mu(F)-x.
\]
Thus, $x\leq \mu(F)-x-1$.  Equality in $x=(\mu(F)-1)/2$ forces equality throughout the displayed chain.  Hence, $e(B)=|B|-1$, so $F[B]$ is a tree.  Equality in Lemma \ref{lem:bipartiteA} gives $|A|-\alpha(F[A])=\mu(F[A])=e(A)$; hence, every edge of $F[A]$ is a component of a matching, and $F[A]$ is a matching plus isolated vertices.
\end{proof}

\begin{remark}\label{rem:forest_real_vs_integral}
Proposition~\ref{prop:forest_equality_certificate} characterizes equality in the real-valued estimate
\(a(F)-\alpha(F)\le (\mu(F)-1)/2\).  When \(\mu(F)\) is even, equality in the integral floor bound is a separate sharpness question; Proposition~\ref{prop:tree_sharp} supplies the extremal trees for every positive value of \(\mu(F)\).
\end{remark}

\begin{proposition}\label{prop:tree_sharp}
For every positive integer $s$, there is a tree $T_s$ with $\mu(T_s)=s$ such that
\[
        a(T_s)-\alpha(T_s)=\floor{\frac{s-1}{2}}.
\]
Thus, the integral forest/tree bound is sharp for every prescribed positive matching number.  The real-valued bound itself is attained exactly by this family when $s$ is odd.
\end{proposition}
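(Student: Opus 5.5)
We will build the trees $T_s$ explicitly. Theorem~\ref{thm:tree} already gives the upper bound $a(T_s)-\alpha(T_s)\le\floor{(s-1)/2}$, so it is enough to show $a(T_s)-\alpha(T_s)\ge\floor{(s-1)/2}$. For the lower bound on $a$ we exhibit one annihilating set of the right size; no sorting of degrees is needed. The shape of the construction follows Proposition~\ref{prop:forest_equality_certificate}: $B$ should be a tree on $x+1$ vertices, and $F[A]$ should be $x$ matching edges plus isolated vertices.

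\emph{Odd case.} Write $x=\floor{(s-1)/2}$ and fix an integer $L\ge 2$. Start from a star with center $c$ and leaves $b_1,\dots,b_x$. For each $i$, add a path $b_i p_i q_i$. Finally attach $L$ new pendant leaves to each of $c,b_1,\dots,b_x$. Call the result $T$; it is clearly a tree. We then verify three things.
\begin{enumerate}
\item $\mu(T)=2x+1$. Match each of the $x+1$ vertices $c,b_i$ to one of its own pendant leaves, and add the edges $p_iq_i$; this gives $2x+1$ edges. The set $\{c,b_1,\dots,b_x,p_1,\dots,p_x\}$ is a vertex cover of this size, so the matching is maximum.
\item $\alpha(T)=n-\mu=(x+1)L+x$. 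This follows from the K\"onig--Egerv\'ary theorem, and is witnessed by the pendant leaves together with the $q_i$.
\item The set $A$ of all pendant leaves, all $q_i$ and all $p_i$ is annihilating. Its degree sum is $(x+1)L+x+2x$, which equals $m(T)=n-1=(x+1)L+3x$.
\end{enumerate}
Hence $a(T)\ge (x+1)L+2x$, so $a(T)-\alpha(T)\ge x$, and Theorem~\ref{thm:tree} forces equality. For $s=2x+1$ we put $T_s=T$; in this case the real-valued bound is attained. When $x=0$ the tree $T$ is just a star.

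\emph{Even case.} For $s=2x+2$ we append to $c$ a new vertex $y$ carrying $L\ge2$ pendant leaves. The bookkeeping is as follows.
\begin{enumerate}
\item $\mu$ increases by exactly $1$, by matching $y$ to one of its leaves; the cover gains $y$.
\item $\alpha$ increases by $L$: the new leaves are added, and $y$ is excluded.
\item $m$ increases by $L+1$.
\item Adding the $L$ new leaves to $A$ raises its degree sum by $L$. So $A$ remains annihilating, with slack $1$, and $a$ increases by at least $L$.
\end{enumerate}
Thus the gap is still at least $x=\floor{(s-1)/2}$, and the upper bound again gives equality.

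The only delicate points are the exact values of $\mu$ and $\alpha$. Both are settled by the explicit matching and cover above together with the K\"onig--Egerv\'ary theorem for trees. The annihilation inequality for $A$ is an exact degree count, tight in the odd case and with slack one in the even case.
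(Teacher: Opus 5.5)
Your proof is correct, but it takes a different route from the paper's.

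The paper uses one uniform family: $T_1=K_2$, and for $s\ge 2$ the spider with one arm of length $1$ and $s-1$ arms of length $2$. This spider has a perfect matching, $\alpha=\mu=s$ and $m=2s-1$. The paper then computes $a(T_s)=s+\lfloor (s-1)/2\rfloor$ exactly from the sorted degree sequence: take all $s$ leaves, then as many degree-$2$ vertices as the leftover budget $s-1$ allows. That argument is self-contained and never invokes Theorem~\ref{thm:tree}.

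You instead build the tree from the equality certificate of Proposition~\ref{prop:forest_equality_certificate}. The star on $c,b_1,\dots,b_x$ plays the role of $B$, and $A$ induces the matching $\{p_iq_i\}$ plus isolated vertices. You exhibit a single annihilating set to get only the lower bound $a-\alpha\ge x$, and let Theorem~\ref{thm:tree} supply the matching upper bound. This spares you any exact determination of $a$. The cost is an odd/even case split and a dependence on the forest theorem.

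Your version also has a by-product. The free parameter $L$ gives infinitely many extremal trees of unbounded order for each fixed $s$. The restriction $L\ge 2$ is not needed; every computation goes through verbatim for $L=1$.

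For the last sentence of the statement, add one line: for even $s$ the real value $(s-1)/2$ is not an integer, so it cannot be attained.
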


\begin{proof}
For $s=1$, take $T_1=K_2$.  Now let $s\geq 2$.  Let $T_s$ be the spider with one arm of length $1$ and $s-1$ arms of length $2$: it has a root $r$, one leaf adjacent directly to $r$, and paths $r u_i w_i$ for $1\leq i\leq s-1$.

The $s-1$ vertices $w_i$ together with the direct leaf form an independent set of size $s$, and matching each $u_i$ to $w_i$ together with matching $r$ to the direct leaf gives a matching of size $s$.  Since $T_s$ is bipartite, it is K\"onig--Egerv\'ary, and hence, $\alpha(T_s)=\mu(T_s)=s$.

The degree sequence consists of $s$ vertices of degree $1$, $s-1$ vertices of degree $2$, and one root of degree $s$.  Also $m(T_s)=2s-1$.  After all $s$ leaves are selected, the remaining degree budget is $s-1$, so precisely $\floor{(s-1)/2}$ vertices of degree $2$ can be added to a maximum annihilating set.  Therefore,
\[
        a(T_s)=s+\floor{\frac{s-1}{2}},
\]
and the claimed equality follows.
\end{proof}

\begin{figure}[H]
\centering
\begin{tikzpicture}[scale=0.95, every node/.style={circle,draw,inner sep=1.3pt,minimum size=5pt}, every path/.style={line width=0.4pt}]
\node (r) at (0,0) {$r$};
\node (l) at (0,-1.25) {};
\draw (r)--(l);
\node (u1) at (-3,1.1) {};
\node (w1) at (-4.1,1.8) {};
\draw (r)--(u1)--(w1);
\node (u2) at (-1.5,1.25) {};
\node (w2) at (-2.2,2.25) {};
\draw (r)--(u2)--(w2);
\node[draw=none] (dots) at (0.3,1.55) {$\cdots$};
\node (u3) at (2.1,1.25) {};
\node (w3) at (3.0,2.15) {};
\draw (r)--(u3)--(w3);
\node[draw=none] at (0,-1.75) {one arm of length $1$};
\node[draw=none] at (0,2.75) {$s-1$ arms of length $2$};
\end{tikzpicture}
\caption{The tree $T_s$ used in Proposition~\ref{prop:tree_sharp}: one arm has length $1$, and the remaining $s-1$ arms have length $2$.}
\label{fig:Ts}
\end{figure}

\begin{corollary}\label{cor:tree_alpha}
If $F$ is a forest with at least one edge, then
\[
        a(F)\leq \frac{3\alpha(F)-1}{2}.
\]
\end{corollary}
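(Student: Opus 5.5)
The plan is to derive the corollary directly from Theorem~\ref{thm:tree} by eliminating $\mu(F)$ in favour of $\alpha(F)$. Theorem~\ref{thm:tree} gives
\[
        a(F)\leq \alpha(F)+\frac{\mu(F)-1}{2},
\]
so it is enough to prove $\mu(F)\leq \alpha(F)$ for every forest $F$.

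Every forest is bipartite, so by the K\"onig--Egerv\'ary theorem every forest is a K\"onig--Egerv\'ary graph. Lemma~\ref{lem:KE_mu_le_alpha} then gives $\mu(F)\leq\alpha(F)$ directly. Substituting this into the displayed inequality yields
\[
        a(F)\leq \alpha(F)+\frac{\alpha(F)-1}{2}=\frac{3\alpha(F)-1}{2}.
\]

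The hypothesis that $F$ has at least one edge is used only in invoking Theorem~\ref{thm:tree}. It is genuinely needed: for an edgeless forest we have $a=\alpha=n$, and the claimed inequality would fail.

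There is no real obstacle here, since the argument is a two-line substitution. The only point to check is that Lemma~\ref{lem:KE_mu_le_alpha} applies, which requires $F$ to be K\"onig--Egerv\'ary. That holds because $F$ is bipartite.
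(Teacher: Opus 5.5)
Your argument is exactly the paper's: Theorem~\ref{thm:tree} gives $a(F)-\alpha(F)\le(\mu(F)-1)/2$, Lemma~\ref{lem:KE_mu_le_alpha} (valid since forests are bipartite, hence K\"onig--Egerv\'ary) gives $\mu(F)\le\alpha(F)$, and rearranging finishes the proof. One correction to your side remark: for an edgeless forest on $n\ge1$ vertices, $a=\alpha=n$ and $n\le(3n-1)/2$ does hold, so the corollary does not fail there; the edge hypothesis is needed only to invoke Theorem~\ref{thm:tree}, where $\mu=0$ makes the right-hand side $-1/2$.
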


\begin{proof}
By Theorem \ref{thm:tree} and Lemma \ref{lem:KE_mu_le_alpha},
\[
        a(F)-\alpha(F)\leq \frac{\mu(F)-1}{2}\leq \frac{\alpha(F)-1}{2}.
\]
Rearranging gives the desired inequality.
\end{proof}

\section{Bipartite graphs}\label{section:bipartite}
The bound in this section holds for every bipartite graph.  The non-tree hypothesis enters only in the sharpness discussion, where equality is realized by connected bipartite non-trees.

\begin{theorem}\label{thm:bipartite}
If $G$ is bipartite, then
\[
        a(G)-\alpha(G)\leq 2+\mu(G)-2\sqrt{1+\mu(G)}.
\]
Equivalently, since $a(G)-\alpha(G)$ is an integer,
\[
        a(G)-\alpha(G)\leq \floor{2+\mu(G)-2\sqrt{1+\mu(G)}}.
\]
The same integer bound may also be written in the ceiling form
\[
        a(G)-\alpha(G)\leq 2+\mu(G)-\ceil{2\sqrt{1+\mu(G)}}.
\]
The equivalence of the two forms uses that $2+\mu(G)$ is an integer.
\end{theorem}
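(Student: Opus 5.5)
The plan is to rerun the proof of Theorem~\ref{thm:tree}, but to replace the forest edge bound $e(B)\le |B|-1$ with the bipartite (Mantel-type) bound $e(B)\le |B|^2/4$.

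Let $\langle A,B\rangle$ be an annihilating decomposition of $G$, and put $x=a(G)-\alpha(G)$ and $c=|B|=n(G)-a(G)$. Lemmas~\ref{lem:bipartiteA} and~\ref{lem:EALEB} give
\[
x\le e(A)\le e(B).
\]
Since $G[B]$ is bipartite on $c$ vertices, its two sides have sizes $p$ and $c-p$. Hence $e(B)\le p(c-p)\le c^2/4$.

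Next we express $c$ through $\mu(G)$. Because $G$ is bipartite, the K\"onig--Egerv\'ary theorem gives $n(G)=\alpha(G)+\mu(G)$. Therefore
\[
c=\alpha(G)+\mu(G)-a(G)=\mu(G)-x,
\]
exactly as in the proof of Proposition~\ref{prop:forest_equality_certificate}. In particular $\mu(G)-x=c\ge 0$.

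Combining the two facts gives
\[
4x\le (\mu(G)-x)^2.
\]
Since $x\ge 0$ and $\mu(G)-x\ge 0$, we may take square roots to get $2\sqrt{x}\le \mu(G)-x$. This says $(\sqrt{x}+1)^2\le \mu(G)+1$, so $\sqrt{x}\le \sqrt{1+\mu(G)}-1$. Squaring again yields
\[
x\le \bigl(\sqrt{1+\mu(G)}-1\bigr)^2=2+\mu(G)-2\sqrt{1+\mu(G)},
\]
which is the real-valued bound.

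The degenerate case $B=\emptyset$ needs no separate treatment. It forces $e(A)\le e(B)=0$, so $G$ is edgeless, and then $x=0$ and $\mu(G)=0$. The right-hand side is $2+0-2=0$, so the bound holds (and the argument above already covers it).

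The floor form follows because $x$ is an integer. For the ceiling form, we use the identity $\lfloor k-y\rfloor=k-\lceil y\rceil$ for an integer $k$ and a real $y$, applied with $k=2+\mu(G)$ and $y=2\sqrt{1+\mu(G)}$.

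The main step needing care is the algebraic inversion of the quadratic inequality $4x\le(\mu-x)^2$. Both sign conditions, $x\ge 0$ and $\mu-x\ge 0$, must be recorded before taking square roots; otherwise the other root branch $\sqrt{x}\ge \sqrt{1+\mu}+1$ would also be admissible. The condition $\mu-x=c\ge0$ rules that branch out. All other steps are direct citations of Lemmas~\ref{lem:EALEB} and~\ref{lem:bipartiteA}.
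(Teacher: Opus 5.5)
Your proposal is correct and follows the paper's proof: you use the same chain $x\le e(A)\le e(B)\le |B|^2/4$ with $|B|=\mu(G)-x$. Your square-root inversion, which uses $\mu(G)-x=|B|\ge 0$, is only a tidier version of the paper's step, where the larger root of $x^2-(2\mu+4)x+\mu^2\ge 0$ is discarded because $x\le\mu(G)$.
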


\begin{proof}
Let $\langle A,B\rangle$ be an annihilating decomposition of $G$, and put
\[
        x=a(G)-\alpha(G).
\]
By Lemmas \ref{lem:EALEB} and \ref{lem:bipartiteA},
\[
        x\leq e(A)\leq e(B).
\]
Since $G[B]$ is bipartite on $|B|$ vertices,
\[
        e(B)\leq \frac{|B|^2}{4}.
\]
Also $G$ is K\"onig--Egerv\'ary, so $n(G)=\alpha(G)+\mu(G)$, and hence
\[
        |B|=n(G)-a(G)=\mu(G)-x.
\]
Thus
\[
        x\leq \frac{(\mu(G)-x)^2}{4}.
\]
This is equivalent to
\[
        x^2-(2\mu(G)+4)x+\mu(G)^2\geq 0.
\]
The two roots of the corresponding quadratic are
\[
        2+\mu(G)\pm 2\sqrt{1+
        \mu(G)}.
\]
Since $x\leq \mu(G)$ for K\"onig--Egerv\'ary graphs, $x$ cannot lie above the larger root.  Therefore,
\[
        x\leq 2+
        \mu(G)-2\sqrt{1+
        \mu(G)},
\]
as required.
\end{proof}

\begin{proposition}[Bipartite equality certificate]\label{prop:bipartite_equality_certificate}
Let $G$ be a bipartite graph, let $\langle A,B\rangle$ be an annihilating decomposition, and put $\mu=\mu(G)$ and $x=a(G)-\alpha(G)$.  If equality holds in the real-valued inequality of Theorem \ref{thm:bipartite}, then $1+\mu$ is a square.  Writing $q=\sqrt{1+\mu}$, one has
\[
        \mu=q^2-1,
        \qquad
        x=(q-1)^2,
        \qquad
        |B|=2(q-1).
\]
Moreover,
\[
        e(A)=e(B)=x,
\]
$G[B]$ is the complete bipartite graph $K_{q-1,q-1}$, $G[A]$ is a matching of size $x$ together with isolated vertices, and
\[
        \alpha(G)=\alpha(G[A]).
\]
Conversely, if a bipartite graph has an annihilating decomposition satisfying these displayed conditions for some integer $q\geq 2$, then it attains equality in the real-valued inequality of Theorem \ref{thm:bipartite}.
\end{proposition}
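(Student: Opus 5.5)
The plan is to retrace the proof of Theorem~\ref{thm:bipartite} and record which inequalities are forced to be equalities. With $\mu=\mu(G)$ and $x=a(G)-\alpha(G)$, that proof gives the chain
\[
        x\leq |A|-\alpha(G[A])=\mu(G[A])\leq e(A)\leq e(B)\leq \frac{|B|^2}{4},\qquad |B|=\mu-x .
\]
Equality in the real-valued bound means $x=2+\mu-2\sqrt{1+\mu}$, which is exactly the smaller root of $x^2-(2\mu+4)x+\mu^2$. Substituting into $x\le (\mu-x)^2/4$ therefore gives equality there. Consequently every intermediate inequality in the chain is an equality.

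The first step is integrality. The quantity $x$ is an integer, so $2\sqrt{1+\mu}$ is an integer; hence $4(1+\mu)$ is a perfect square, and therefore $1+\mu$ is a perfect square. I expect this small arithmetic point to need the most care in the write-up. Write $q=\sqrt{1+\mu}$, which is a positive integer. Then $\mu=q^2-1$ and
\[
        x=2+q^2-1-2q=(q-1)^2,\qquad |B|=\mu-x=q^2-1-(q-1)^2=2(q-1).
\]

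The second step reads off the structure from the tight links of the chain.
\begin{itemize}
\item Equality $e(B)=|B|^2/4=(q-1)^2$ in a bipartite graph on $2(q-1)$ vertices forces $G[B]=K_{q-1,q-1}$. This uses the standard fact that $e\le rs\le (r+s)^2/4$ for a bipartite graph with parts of sizes $r,s$, with equality only if $r=s$ and the graph is complete.
\item Equality $e(A)=e(B)=x$ holds directly.
\item Equality $\mu(G[A])=e(A)$ says that the edges of $G[A]$ are pairwise disjoint. So $G[A]$ is a matching of size $x$ plus isolated vertices, exactly as in Proposition~\ref{prop:forest_equality_certificate}.
\item Equality in $x\le |A|-\alpha(G[A])$ means $\alpha(G)=\alpha(G[A])$.
\end{itemize}
One degenerate case must be handled: if $q=1$ then $\mu=0$, $x=0$ and $B=\emptyset$, and the statements hold trivially (read $K_{0,0}$ as empty). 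The statement's integer $q\ge2$ appears only in the converse, so no further work is needed here.

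For the converse, assume a bipartite $G$ has an annihilating decomposition with $|B|=2(q-1)$, $G[A]$ a matching of size $x=(q-1)^2$ plus isolated vertices, and $\alpha(G)=\alpha(G[A])$. By K\"onig--Egerv\'ary applied to $G[A]$,
\[
        a(G)-\alpha(G)=|A|-\alpha(G[A])=\mu(G[A])=(q-1)^2 .
\]
Since $G$ is K\"onig--Egerv\'ary, $\mu(G)=n-\alpha(G)=|B|+|A|-\alpha(G)=2(q-1)+(q-1)^2=q^2-1$. Then
\[
        2+\mu-2\sqrt{1+\mu}=q^2+1-2q=(q-1)^2 ,
\]
which is exactly equality in the real-valued bound. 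The conditions $e(A)=e(B)$ and $G[B]=K_{q-1,q-1}$ are consistent with this but are not needed for the converse computation.
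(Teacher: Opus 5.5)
Your proposal is correct and follows the paper's proof essentially step for step. You retrace the chain $x\le |A|-\alpha(G[A])=\mu(G[A])\le e(A)\le e(B)\le |B|^2/4$ with $|B|=\mu-x$, note that equality at the ends forces every link to be tight, and read off $q=\sqrt{1+\mu}\in\mathbb{Z}$ together with the structure of $G[A]$ and $G[B]$. Your explicit step from ``$4(1+\mu)$ is a square'' to ``$1+\mu$ is a square'' and your remark on the case $q=1$ are small clarifications the paper leaves implicit. Your converse is the same computation the paper gives in Theorem~\ref{thm:bipartite_connected_classification}.
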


\begin{proof}
The proof of Theorem \ref{thm:bipartite} gives
\[
        x\leq e(A)\leq e(B)\leq \frac{|B|^2}{4},
        \qquad
        |B|=\mu-x.
\]
Equality in the final bound is equivalent to equality in
\[
        x\leq \frac{(\mu-x)^2}{4}.
\]
Thus, $x=2+\mu-2\sqrt{1+\mu}$.  Since $x$ is an integer, $q=\sqrt{1+\mu}$ is an integer.  Hence
\[
        \mu=q^2-1,
\]
\[
        x=2+(q^2-1)-2q=(q-1)^2,
\]
and
\[
        |B|=\mu-x=(q^2-1)-(q-1)^2=2(q-1).
\]
  Equality also forces
\[
        x=e(A)=e(B)=\frac{|B|^2}{4}.
\]
The last equality is the extremal equality case for bipartite graphs, so $G[B]$ is $K_{q-1,q-1}$.  Equality in Lemma \ref{lem:bipartiteA} forces $\alpha(G)=\alpha(G[A])$ and $\mu(G[A])=e(A)$; hence, the edges of $G[A]$ are pairwise disjoint.  The converse follows by reversing the same chain of equalities.
\end{proof}

\begin{theorem}[Connected bipartite real-equality template]\label{thm:bipartite_connected_classification}
Let $G$ be a connected bipartite graph.  Then $G$ attains equality in the real-valued inequality of Theorem~\ref{thm:bipartite} if and only if there is an integer $q\geq2$ and a partition $V(G)=A\dot\cup B$ with the following properties.  First, $B=B_1\dot\cup B_2$, where $|B_1|=|B_2|=q-1$, and
\[
        G[B]\cong K_{q-1,q-1}
\]
with partite classes $B_1$ and $B_2$.  Second, $G[A]$ is a matching of $(q-1)^2$ edges together with isolated vertices.  Third, $A$ is a maximum annihilating set of $G$.  Fourth,
\[
        \alpha(G)=\alpha(G[A]).
\]
Finally, the resulting graph is connected.

Equivalently, the fourth condition may be replaced by the following explicit cross-edge condition: for every set $S\subseteq B_i$, $i\in\{1,2\}$,
\[
        |S|+\alpha\bigl(G[A\setminus N_G(S)]\bigr)\leq \alpha(G[A]).
\]
For every graph satisfying these conditions one has
\[
        \mu(G)=q^2-1,
        \qquad
        a(G)-\alpha(G)=(q-1)^2.
\]
\end{theorem}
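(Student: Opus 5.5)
The plan is to read the forward direction off Proposition~\ref{prop:bipartite_equality_certificate}, to prove the converse by a direct computation that uses the K\"onig--Egerv\'ary theorem on $G[A]$ and on $G$, and to get the equivalence of the two forms of the fourth condition by splitting a maximum independent set of $G$ along the partition $A\dot\cup B$. Connectedness is a standing hypothesis carried through unchanged; it plays no role in the algebra.

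\emph{Forward direction.} Suppose the connected bipartite graph $G$ attains equality in the real-valued inequality of Theorem~\ref{thm:bipartite}, and fix an annihilating decomposition $\langle A,B\rangle$. Proposition~\ref{prop:bipartite_equality_certificate} gives the following:
\begin{itemize}
\item $q=\sqrt{1+\mu(G)}$ is an integer, with $\mu(G)=q^2-1$ and $a(G)-\alpha(G)=(q-1)^2$;
\item $|B|=2(q-1)$ and $G[B]\cong K_{q-1,q-1}$, with partite classes $B_1,B_2$;
\item $G[A]$ is a matching of $(q-1)^2$ edges plus isolated vertices;
\item $\alpha(G)=\alpha(G[A])$.
\end{itemize}
By construction $A$ is a maximum annihilating set. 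The only remaining point is $q\geq 2$. If $q=1$, then $\mu(G)=0$ and $G$ is edgeless, so equality would read $0=2+0-2$. That is true, so I will treat $q=1$ as degenerate: a connected edgeless graph is $K_1$, and there one must check whether the template is meant to include it. I expect either to note that $q\ge 2$ whenever $G$ has an edge, or to record $K_1$ as the trivial exception.

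\emph{Converse.} Assume a partition with the four listed properties exists for some $q\ge 2$. Then $a(G)=|A|$, because $A$ is a maximum annihilating set. Since $G[A]$ is bipartite, the K\"onig--Egerv\'ary theorem gives
\[
\alpha(G)=\alpha(G[A])=|A|-\mu(G[A])=|A|-(q-1)^2,
\]
so $x:=a(G)-\alpha(G)=(q-1)^2$. Since $G$ is bipartite,
\[
\mu(G)=n(G)-\alpha(G)=|A|+2(q-1)-|A|+(q-1)^2=q^2-1.
\]
Therefore
\[
2+\mu(G)-2\sqrt{1+\mu(G)}=q^2+1-2q=(q-1)^2=x,
\]
which is equality in Theorem~\ref{thm:bipartite}. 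The same computation yields the final displayed formulas for every graph satisfying the conditions. Alternatively, one can check that $e(A)=e(B)=(q-1)^2$ and invoke the converse part of Proposition~\ref{prop:bipartite_equality_certificate} directly.

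\emph{Cross-edge reformulation.} This is the step needing the most care, though it is elementary. Let $I$ be any independent set of $G$. Then $I\cap B$ is independent in $K_{q-1,q-1}$, so $S:=I\cap B$ lies entirely inside $B_1$ or entirely inside $B_2$ (it may be empty). Also $I\cap A$ is an independent set of $G[A\setminus N_G(S)]$. Conversely, for any $S\subseteq B_i$ and any independent set $J$ of $G[A\setminus N_G(S)]$, the set $S\cup J$ is independent in $G$. Hence
\[
\alpha(G)=\max_{i\in\{1,2\},\ S\subseteq B_i}\Bigl(|S|+\alpha\bigl(G[A\setminus N_G(S)]\bigr)\Bigr).
\]
The choice $S=\emptyset$ contributes exactly $\alpha(G[A])$, so $\alpha(G)\ge\alpha(G[A])$ always. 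Equality $\alpha(G)=\alpha(G[A])$ therefore holds precisely when every term in the maximum is at most $\alpha(G[A])$, which is the stated cross-edge condition. I anticipate no real obstacle beyond stating this decomposition of independent sets carefully, including the observation that an independent set of $K_{q-1,q-1}$ lies in a single side.
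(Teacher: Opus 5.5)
Your proposal is correct and follows the paper's proof essentially step for step: the forward direction is read off Proposition~\ref{prop:bipartite_equality_certificate}, the converse is the same K\"onig--Egerv\'ary computation $\mu(G)=|B|+|A|-\alpha(G[A])=q^2-1$, and the cross-edge reformulation is the same splitting of an independent set according to whether it meets $B_1$ or $B_2$. Your exact max-formula for $\alpha(G)$ is slightly more explicit than the paper's one-directional phrasing.

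Your worry about $q\geq2$ is justified, and you should resolve it rather than hedge. If $G$ has an edge, then $1+\mu(G)\geq2$ is a perfect square, so $q\geq2$. On the other hand, $K_1$ attains equality ($a=\alpha=1$, $\mu=0$) but admits no partition with $q\geq2$, because that would force $|B|=2(q-1)\geq2$. So the ``only if'' direction genuinely needs $m(G)\geq1$. The paper's proof uses this hypothesis tacitly when it says the certificate yields an integer $q\geq2$.
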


\begin{proof}
Suppose first that $G$ is connected and attains equality.  Proposition~\ref{prop:bipartite_equality_certificate} applied to an annihilating decomposition $\langle A,B\rangle$ gives an integer $q\geq2$ such that $G[B]\cong K_{q-1,q-1}$, $G[A]$ is a matching of $(q-1)^2$ edges together with isolated vertices, $A$ is maximum annihilating, and $\alpha(G)=\alpha(G[A])$.  The graph is connected by hypothesis.

It remains only to justify the stated cross-edge reformulation.  Since $G[B]$ is complete bipartite, every independent set that meets $B$ meets at most one of $B_1$ and $B_2$.  If its intersection with $B_i$ is $S$, then its remaining vertices lie in $A\setminus N_G(S)$, and hence its size is at most
\[
        |S|+\alpha\bigl(G[A\setminus N_G(S)]\bigr).
\]
Thus no independent set using vertices of $B$ is larger than $\alpha(G[A])$ precisely when the displayed inequality holds for all such $S$.  Independent sets contained in $A$ are already bounded by $\alpha(G[A])$.

Conversely, suppose that a connected bipartite graph has such a partition.  Since $A$ is a maximum annihilating set, $a(G)=|A|$.  Since $G[A]$ is a matching of $(q-1)^2$ edges together with isolated vertices,
\[
        |A|-\alpha(G[A])=(q-1)^2.
\]
The cross-edge condition gives $\alpha(G)=\alpha(G[A])$.  Therefore
\[
        a(G)-\alpha(G)=|A|-\alpha(G[A])=(q-1)^2.
\]
Because $G$ is bipartite, it is K\"onig--Egerv\'ary, and so
\[
        \mu(G)=n(G)-\alpha(G)=|B|+|A|-\alpha(G[A])=2(q-1)+(q-1)^2=q^2-1.
\]
Hence
\[
        2+\mu(G)-2q=2+(q^2-1)-2q=(q-1)^2,
\]
and equality in Theorem~\ref{thm:bipartite} follows.
\end{proof}

\begin{remark}
Theorem~\ref{thm:bipartite_connected_classification} is a structural template for the connected bipartite real-equality cases.  The remaining freedom is exactly the freedom to add bipartition-preserving cross-edges between the matching-plus-isolates side $A$ and the balanced complete bipartite side $B$, subject to the maximum-annihilating condition, the independent-set condition displayed in the theorem, and connectedness.  Thus the theorem is not merely a list of examples, but neither does it enumerate every cross-edge pattern separately.
\end{remark}

\begin{proposition}\label{prop:six_vertex}
There is a connected bipartite non-tree on six vertices attaining equality in Theorem \ref{thm:bipartite}.  Moreover, six is the smallest possible order of a connected bipartite non-tree attaining equality.
\end{proposition}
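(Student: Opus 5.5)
The plan is to exhibit the six-vertex graph from Table~\ref{tab:sharpness-guide} and check it directly, then use Proposition~\ref{prop:bipartite_equality_certificate} to obtain the lower bound on the order. We read ``attaining equality'' as equality in the \emph{real-valued} inequality of Theorem~\ref{thm:bipartite}. This reading is forced: with the floor form, $C_4$ already attains equality on four vertices, since $a(C_4)=\alpha(C_4)=2$ and $\floor{4-2\sqrt3}=0$.

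\emph{Existence.} Take $G$ on $\{0,\dots,5\}$ with edges $02,03,04,12,13,35$.
\begin{itemize}
\item It is bipartite with classes $\{0,1,5\}$ and $\{2,3,4\}$.
\item It is connected and contains the $4$-cycle $0\,2\,1\,3\,0$, so it is not a tree.
\item The degrees are $d(0)=d(3)=3$, $d(1)=d(2)=2$, $d(4)=d(5)=1$, and $m(G)=6$. The sorted sequence is $1,1,2,2,3,3$. The first four terms sum to $6\le m(G)$ and the first five to $9>6$, so $a(G)=4$.
\item The matching $\{04,12,35\}$ has size $3$, which is the maximum on six vertices, so $\mu(G)=3$.
\item By the K\"onig--Egerv\'ary theorem, $\alpha(G)=6-3=3$.
\end{itemize}
Hence $a(G)-\alpha(G)=1$. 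The bound is $2+\mu-2\sqrt{1+\mu}=5-4=1$, so equality holds. As a consistency check, this matches the template of Proposition~\ref{prop:bipartite_equality_certificate} with $q=2$: $A=\{1,2,4,5\}$ induces the single edge $12$ together with two isolated vertices, and $B=\{0,3\}$ induces $K_{1,1}$.

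\emph{Minimality.} Let $G$ be a connected bipartite non-tree attaining real equality. By Proposition~\ref{prop:bipartite_equality_certificate}, $1+\mu(G)=q^2$ for some integer $q$.
\begin{itemize}
\item If $q=1$, then $\mu(G)=0$. Then $G$ is edgeless, and being connected it is $K_1$, which is a tree. This case is excluded.
\item Hence $q\ge2$, so $\mu(G)\ge3$.
\end{itemize}
Since a matching of size $3$ covers six distinct vertices, $n(G)\ge 2\mu(G)\ge6$.

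The only delicate point is fixing the meaning of ``equality'' as the real-valued one, as explained at the start. Once that is settled, both parts are routine: the existence part is a direct degree-sequence computation, and the minimality part follows immediately from the squareness conclusion of the certificate.
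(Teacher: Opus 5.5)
Your proof is correct and follows the paper's argument. You verify the same six-vertex graph by the same degree-sequence and perfect-matching computation, and you get minimality from the fact that real equality forces $1+\mu$ to be a perfect square; the paper obtains $\mu\geq 2$ from the even cycle and rules out $\mu=2$, while you rule out $\mu=0$ and use $n\geq 2\mu\geq 6$, which is an immaterial difference. Your remark that ``equality'' must mean the real-valued inequality is well taken, since $C_4$ attains the floor form on four vertices, and the paper's proof tacitly uses the same reading.
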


\begin{figure}[H]
\centering
\begin{tikzpicture}[scale=0.95, every node/.style={circle,draw,inner sep=1.2pt,minimum size=16pt}]
\node (v0) at (0,1.4) {$0$};
\node (v1) at (1.3,1.4) {$1$};
\node (v5) at (2.6,1.4) {$5$};
\node (v2) at (0,0) {$2$};
\node (v3) at (1.3,0) {$3$};
\node (v4) at (2.6,0) {$4$};
\draw (v0)--(v2); \draw (v0)--(v3); \draw (v0)--(v4);
\draw (v1)--(v2); \draw (v1)--(v3);
\draw (v5)--(v3);
\end{tikzpicture}
\caption{The six-vertex connected bipartite equality graph, with bipartition $\{0,1,5\}\cup\{2,3,4\}$.}
\label{fig:six-vertex-bipartite}
\end{figure}

\begin{proof}
Let $G$ have vertex set $\{0,1,2,3,4,5\}$ and edge set
\[
        \{02,03,04,12,13,35\}.
\]
This graph is connected and bipartite, with bipartition
\[
        \{0,1,5\}\cup \{2,3,4\}.
\]
It is not a tree, since $0,2,1,3,0$ is a cycle.  Its degree sequence is
\[
        1,1,2,2,3,3,
\]
and $m(G)=6$.  Hence, $a(G)=4$.  The edges $04,12,35$ form a perfect matching, so $\mu(G)=3$, and since $G$ is bipartite, $\alpha(G)=n(G)-\mu(G)=3$.  Therefore,
\[
        a(G)-\alpha(G)=1.
\]
On the other hand,
\[
        2+
        \mu(G)-2\sqrt{1+
        \mu(G)}=2+3-2\sqrt4=1.
\]
Thus, equality holds.

For minimality, let $H$ be a connected bipartite non-tree attaining equality.  Then $H$ contains an even cycle, so $\mu(H)\geq 2$.  Equality forces $2+
\mu(H)-2\sqrt{1+
\mu(H)}$ to be an integer, and hence, $1+
\mu(H)$ must be a square.  If $n(H)\leq 5$, then $\mu(H)\leq 2$, and therefore, $\mu(H)=2$; but then $1+\mu(H)=3$ is not a square.  Hence, no such graph has fewer than six vertices.
\end{proof}

\begin{proposition}\label{prop:connected_bipartite_family}
There is an infinite family of connected bipartite non-trees attaining equality in Theorem \ref{thm:bipartite}.
\end{proposition}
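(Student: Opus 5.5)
We will produce the family by realizing the template of Theorem~\ref{thm:bipartite_connected_classification} for every integer $q\geq 2$. The construction is explicit, and the verification consists of checking the template's conditions one at a time.

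\emph{Construction.} Fix $q\geq2$ and put $t=q-1$. Take $B_1=\{b_1,\dots,b_t\}$ and $B_2=\{c_1,\dots,c_t\}$, and join every $b_i$ to every $c_j$, so that $G[B]\cong K_{t,t}$. For each pair $(i,j)\in[t]\times[t]$, add a new edge $x_{ij}y_{ij}$ and the two cross edges $b_ix_{ij}$ and $c_jy_{ij}$. Finally, attach one pendant leaf $\ell(v)$ to each vertex $v\in B$. Let $A$ consist of all $x_{ij},y_{ij}$ and all leaves. Then $G[A]$ is a matching of $t^2=(q-1)^2$ edges together with $2t$ isolated vertices.

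\emph{Bipartite, connected, not a tree.} Place $B_1\cup\{y_{ij}\}\cup\{\ell(c_j)\}$ on one side and $B_2\cup\{x_{ij}\}\cup\{\ell(b_i)\}$ on the other. A quick check of the edge types $b_ic_j$, $b_ix_{ij}$, $x_{ij}y_{ij}$, $y_{ij}c_j$ and the leaf edges shows this is a valid bipartition. The graph is connected, since every vertex is joined to $B$ and $G[B]$ is connected. It is not a tree because $b_ix_{ij}y_{ij}c_jb_i$ is a $4$-cycle, already when $q=2$. For $q=2$ the graph is exactly the six-vertex graph of Proposition~\ref{prop:six_vertex}.

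\emph{Maximum annihilating set.} Count edges: there are $t^2$ edges in $B$, $t^2$ matching edges, $2t^2$ cross edges and $2t$ leaf edges, so $m=4t^2+2t$. Every vertex of $A$ has degree $1$ or $2$, and the degree sum over $A$ is $2\cdot 2t^2+2t=m$, so $A$ is annihilating. Each vertex of $B$ has degree $t+t+1\geq3$. Hence the sorted degree sequence lists all of $A$ first, the partial sum after $|A|$ terms is exactly $m$, and adding any further vertex exceeds $m$. Therefore $a(G)=|A|$ and $A$ is a maximum annihilating set.

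\emph{Independence condition.} We verify the cross-edge criterion of Theorem~\ref{thm:bipartite_connected_classification}. Let $S\subseteq B_i$ be nonempty. Deleting $N_G(S)$ from $A$ removes at least the $|S|$ private leaves $\ell(v)$, $v\in S$. These leaves are isolated in $G[A]$ and therefore lie in every maximum independent set of $G[A]$. Consequently
\[
        \alpha\bigl(G[A\setminus N_G(S)]\bigr)\leq\alpha(G[A])-|S|,
\]
which is exactly the required inequality. The theorem then yields $\mu(G)=q^2-1$ and $a(G)-\alpha(G)=(q-1)^2=2+\mu-2\sqrt{1+\mu}$. This gives equality for every $q\geq2$, hence an infinite family.

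The only delicate step is the independence condition, and this is precisely why each vertex of $B$ carries a private pendant leaf. As a fallback, one could compute $\alpha$ directly: the set consisting of the $y_{ij}$ together with all leaves is independent of size $t^2+2t$, and a maximum matching of size $t^2+2t$ can be read off from the matching edges and the leaf edges. König--Egerváry then gives $\alpha=t^2+2t$ directly.
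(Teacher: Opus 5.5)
Your proof is correct, but it uses a different family and a different verification route from the paper's.

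\textbf{The paper's route.} The paper uses $H_r$: a core $K_{r,r}$ with a pendant leaf at every core vertex, plus $r^2$ pendant paths $x_1b_ib_i'$ all hung from a single core vertex $x_1$. It verifies everything directly. A perfect matching together with K\"onig--Egerv\'ary gives $\alpha=\mu=r^2+2r$. The leaves and the $b_i$ have degree sum exactly $m=3r^2+2r$, so $a=2r^2+2r$. Theorem~\ref{thm:bipartite_connected_classification} is never invoked.

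\textbf{Your route.} You spread each matching edge $x_{ij}y_{ij}$ of $G[A]$ across the two sides of the core, and you certify $\alpha(G)=\alpha(G[A])$ through the template's cross-edge criterion, using the private pendant leaves.

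\textbf{What each route buys.} Your construction makes each $b_ix_{ij}y_{ij}c_jb_i$ a $4$-cycle, so the graph is a non-tree already at $q=2$. There it is exactly the six-vertex graph of Proposition~\ref{prop:six_vertex}. Consequently your family attains real-valued equality at every matching number $\mu=q^2-1$ with $q\ge 2$. By Proposition~\ref{prop:bipartite_equality_certificate}, these are the only values at which real-valued equality can occur in a graph with an edge. The paper's family needs $r\ge 2$, because $H_1$ is a tree, so it covers only $q=r+1\ge 3$. The paper's argument, in turn, is self-contained and needs no structural template. Your fallback computation (a perfect matching from the $x_{ij}y_{ij}$ and leaf edges, then K\"onig--Egerv\'ary) is essentially the paper's argument and suffices on its own.

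\textbf{A wording point in your main route.} The fact that the leaves ``lie in every maximum independent set'' does not by itself give $\alpha\bigl(G[A\setminus N_G(S)]\bigr)\le\alpha(G[A])-|S|$. For example, both ends of $P_3$ lie in its unique maximum independent set, yet deleting them lowers $\alpha$ by only $1$. The correct reason is that the private leaves of $S$ are isolated in $G[A]$ and belong to $N_G(S)$. Hence they can be added to any independent set of $G[A\setminus N_G(S)]$.
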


\begin{proof}
Fix an integer $r\geq 2$.  Start with a complete bipartite graph $K_{r,r}$ with core bipartition $X\cup Y$.  Attach one pendant leaf to every vertex of $X\cup Y$.  Next add vertices $b_1,\ldots,b_{r^2}$; join each $b_i$ to a fixed vertex of $X$, and attach one pendant leaf $b_i'$ to $b_i$.  This gives a connected bipartite graph $H_r$.  This family is not obtained by disjoint unions of the six-vertex graph; the complete bipartite core keeps the graph connected.  Since $r\geq 2$, the core $K_{r,r}$ contains a cycle, so $H_r$ is not a tree.

\begin{figure}[H]
\centering
\begin{tikzpicture}[
        scale=1.0,
        core/.style={circle,draw,fill=white,inner sep=1.1pt,minimum size=7pt,font=\scriptsize},
        leaf/.style={circle,draw,fill=gray!12,inner sep=0.8pt,minimum size=6pt},
        addon/.style={circle,draw,fill=gray!25,inner sep=1.0pt,minimum size=7pt,font=\scriptsize},
        faint/.style={line width=0.35pt,gray!55},
        edge/.style={line width=0.45pt}
]
\node[core] (x1) at (0,1.25) {$x_1$};
\node[core] (x2) at (0,0.45) {$x_2$};
\node[draw=none,font=\small] (xd) at (0,-0.20) {$\vdots$};
\node[core] (xr) at (0,-0.95) {$x_r$};
\node[core] (y1) at (3.60,1.25) {$y_1$};
\node[core] (y2) at (3.60,0.45) {$y_2$};
\node[draw=none,font=\small] (yd) at (3.60,-0.20) {$\vdots$};
\node[core] (yr) at (3.60,-0.95) {$y_r$};
\foreach \x in {x1,x2,xr}{\foreach \y in {y1,y2,yr}{\draw[faint] (\x)--(\y);}}
\node[draw=none,font=\small] at (1.80,1.70) {core $K_{r,r}$};
\node[draw=none,font=\small] at (-0.35,-1.35) {$X$};
\node[draw=none,font=\small] at (3.95,-1.35) {$Y$};

\foreach \x/\lx/\y in {x1/lx1/1.25,x2/lx2/0.45,xr/lxr/-0.95}{
    \node[leaf] (\lx) at (-0.85,\y) {}; \draw[edge] (\lx)--(\x);
}
\foreach \y/\ly/\yy in {y1/ly1/1.25,y2/ly2/0.45,yr/lyr/-0.95}{
    \node[leaf] (\ly) at (4.45,\yy) {}; \draw[edge] (\y)--(\ly);
}

\node[addon] (b1) at (0.70,-2.25) {$b_1$};
\node[addon] (b2) at (1.45,-2.25) {$b_2$};
\node[draw=none,font=\small] at (2.20,-2.25) {$\cdots$};
\node[addon] (br) at (2.95,-2.25) {$b_{r^2}$};
\foreach \b in {b1,b2,br}{\draw[faint] (x1)--(\b);}
\node[leaf] (bp1) at (0.70,-3.05) {}; \draw[edge] (b1)--(bp1);
\node[leaf] (bp2) at (1.45,-3.05) {}; \draw[edge] (b2)--(bp2);
\node[leaf] (bpr) at (2.95,-3.05) {}; \draw[edge] (br)--(bpr);
\node[draw=none,font=\scriptsize] at (1.80,-3.55) {$r^2$ branches $x_1 b_i b_i'$};
\end{tikzpicture}
\caption{The connected bipartite equality family $H_r$.  The core is $K_{r,r}$; every core vertex has one pendant leaf, and there are $r^2$ additional branches $x_1b_i b_i'$.}
\label{fig:Hr}
\end{figure}

The graph has
\[
        n(H_r)=2r^2+4r
\]
vertices.  There is a perfect matching: match every core vertex to its pendant leaf, and match every $b_i$ to $b_i'$.  Thus
\[
        \mu(H_r)=\frac{n(H_r)}{2}=r^2+2r.
\]
Because $H_r$ is bipartite, it is K\"onig--Egerv\'ary, and so
\[
        \alpha(H_r)=n(H_r)-\mu(H_r)=r^2+2r.
\]

The number of edges is
\[
        m(H_r)=r^2+2r+r^2+r^2=3r^2+2r,
\]
where the four terms count respectively the edges of $K_{r,r}$, the pendant edges at the core vertices, the edges joining the $b_i$ to the core, and the pendant edges $b_ib_i'$.  There are $r^2+2r$ leaves of degree one and $r^2$ vertices $b_i$ of degree two.  The sum of their degrees is
\[
        (r^2+2r)+2r^2=3r^2+2r=m(H_r).
\]
All remaining vertices have degree at least $r+1\geq 3$, so
\[
        a(H_r)=2r^2+2r.
\]
Consequently
\[
        a(H_r)-\alpha(H_r)=r^2.
\]
Since
\[
        \mu(H_r)=r^2+2r=(r+1)^2-1,
\]
we also have
\[
        2+
        \mu(H_r)-2\sqrt{1+
        \mu(H_r)}
        =2+(r+1)^2-1-2(r+1)=r^2.
\]
Thus, equality holds for every $r\geq 2$.
\end{proof}

\begin{corollary}\label{cor:bip_alpha}
If $G$ is bipartite, then
\[
        a(G)\leq 2+2\alpha(G)-2\sqrt{1+
        \alpha(G)}.
\]
\end{corollary}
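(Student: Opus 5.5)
We will derive the corollary directly from Theorem~\ref{thm:bipartite} together with Lemma~\ref{lem:KE_mu_le_alpha}, in the same way as Corollary~\ref{cor:tree_alpha} was derived from Theorem~\ref{thm:tree}.  Since every bipartite graph is K\"onig--Egerv\'ary, Lemma~\ref{lem:KE_mu_le_alpha} gives $\mu(G)\leq\alpha(G)$.  Theorem~\ref{thm:bipartite} gives
\[
        a(G)\leq \alpha(G)+f(\mu(G)),\qquad f(t)=2+t-2\sqrt{1+t}.
\]
It therefore suffices to replace $\mu(G)$ by the larger quantity $\alpha(G)$ inside $f$.

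The one point needing care is that $f$ is nondecreasing on $[0,\infty)$, so that this substitution does not decrease the right-hand side.  We will check this by differentiation:
\[
        f'(t)=1-\frac{1}{\sqrt{1+t}}\geq 0 \quad\text{for } t\geq 0.
\]
Alternatively, one can write $f(t)=(\sqrt{1+t}-1)^2$, which is a square of a nonnegative nondecreasing function of $t\geq 0$.  Hence $f(\mu(G))\leq f(\alpha(G))$.

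Combining the two displays, we obtain
\[
        a(G)\leq \alpha(G)+2+\alpha(G)-2\sqrt{1+\alpha(G)}=2+2\alpha(G)-2\sqrt{1+\alpha(G)},
\]
which is the claimed bound.  The edgeless case needs no separate treatment: there $\mu(G)=0$ and $f(0)=0$, and the argument above goes through unchanged.  We expect no real obstacle; the only step that is not immediate is the monotonicity of $f$, and the square form $f(t)=(\sqrt{1+t}-1)^2$ settles it at once.
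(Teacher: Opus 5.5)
Your proposal is correct and follows essentially the same route as the paper: combine Theorem~\ref{thm:bipartite} with $\mu(G)\leq\alpha(G)$ from Lemma~\ref{lem:KE_mu_le_alpha} and use that $f(t)=2+t-2\sqrt{1+t}$ is increasing on $[0,\infty)$. Your square form $f(t)=(\sqrt{1+t}-1)^2$ is a neat way to justify the monotonicity, which the paper simply asserts.
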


\begin{proof}
The function $f(t)=2+t-2\sqrt{1+t}$ is increasing for $t\geq 0$.  By Lemma \ref{lem:KE_mu_le_alpha}, $\mu(G)\leq \alpha(G)$.  Therefore, Theorem~\ref{thm:bipartite} gives
\[
        a(G)-\alpha(G)\leq f(\mu(G))\leq f(\alpha(G))
        =2+\alpha(G)-2\sqrt{1+\alpha(G)}.
\]
Adding $\alpha(G)$ to both sides gives the claimed inequality.
\end{proof}

\section{K\"onig--Egerv\'ary graphs}\label{section:KE}

K\"onig--Egerv\'ary graphs and their equivalent matching-cover formulations have a substantial structural and algorithmic literature.  The foundational characterization and recognition sources include Deming, Gavril, and Sterboul \cite{Deming1979,Gavril1977,Sterboul1979}.  Subsequent work has developed the structure of maximum stable sets, cores and coronas, set-and-collection lemmas for families of maximum stable sets, stability under edge operations, and critical edges; see, for example, \cite{BorosGolumbicLevit2002,LevitMandrescu2002,LevitMandrescu2003,LevitMandrescu2006,LevitMandrescu2014SetCollection}.  Further work treats common maximum-matching characterizations, critical independent sets, maximum matchings, additional characterizations, deletion-preserving versions of the K\"onig--Egerv\'ary property, and $1$-K\"onig--Egerv\'ary graphs \cite{LevitMandrescu2011CommonMatchings,LevitMandrescu2012KE,LevitMandrescu2013MaxMatchings,JardenLevitMandrescu2017,LevitMandrescu2025Deletion,LevitMandrescu2026OneKE}.  The following theorem gives a stronger K\"onig--Egerv\'ary estimate than the elementary non-bipartite bound $a(G)-\alpha(G)\leq \mu(G)-2$.

\begin{theorem}\label{thm:KE_strong}
Let $G$ be a K\"onig--Egerv\'ary graph, and define
\[
        s(\mu)=\ceil{\frac{\sqrt{8\mu+1}-1}{2}}.
\]
Then
\[
        a(G)-\alpha(G)\leq \mu(G)-s(\mu(G)).
\]
\end{theorem}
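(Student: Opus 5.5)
The plan is to run the annihilating-decomposition device from the proof architecture. Here the K\"onig--Egerv\'ary identity $n(G)=\alpha(G)+\mu(G)$ converts the size of $B$ into a matching quantity. The density of $G[B]$ is then controlled by the trivial clique bound $e(B)\le\binom{|B|}{2}$, playing the role that the forest and bipartite bounds played in Theorems~\ref{thm:tree} and~\ref{thm:bipartite}.

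First, fix an annihilating decomposition $\langle A,B\rangle$ and put $x=a(G)-\alpha(G)$ and $\mu=\mu(G)$. Since $G$ is K\"onig--Egerv\'ary,
\[
        |B|=n(G)-a(G)=\alpha(G)+\mu-a(G)=\mu-x .
\]
Write $c=|B|=\mu-x\ge 0$. Next I bound $x$ by the edges of $A$ without using bipartiteness, which Lemma~\ref{lem:bipartiteA} required. We have
\[
        x=|A|-\alpha(G)\le |A|-\alpha(G[A])=\tau(G[A])\le e(A).
\]
The last step holds because choosing one endpoint of every edge of $G[A]$ gives a vertex cover of $G[A]$. Combining this with Lemma~\ref{lem:EALEB} and the clique bound on $G[B]$ gives
\[
        \mu-c = x\le e(A)\le e(B)\le \binom{c}{2}=\frac{c(c-1)}{2}.
\]
This covers the degenerate case $B=\emptyset$ automatically, since then $c=0$, both sides vanish, and $x\le 0$.

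Rearranging yields $\mu\le c+\binom c2=\frac{c(c+1)}{2}$. The final step is the arithmetic translation, which I expect to be the only delicate point. Since $t\mapsto t(t+1)/2$ is strictly increasing on $t\ge 0$, the integers $c\ge 0$ with $c(c+1)/2\ge\mu$ are exactly those with $c$ at least the smallest such integer. Solving $t^2+t-2\mu\ge0$ for real $t\ge0$ gives $t\ge\frac{\sqrt{8\mu+1}-1}{2}$, so the smallest admissible integer is precisely $s(\mu)$.

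Hence $c\ge s(\mu)$, that is, $x=\mu-c\le \mu-s(\mu)$, which is the claim. The case $\mu=0$ is consistent: $G$ is edgeless, $s(0)=0$, and $a=\alpha=n$. I would also check the ceiling carefully when $8\mu+1$ is a perfect square, i.e.\ when $\mu$ is triangular. In that case $\frac{\sqrt{8\mu+1}-1}{2}$ is itself an integer $c$ with $c(c+1)/2=\mu$, so equality is allowed and the ceiling does not overshoot.
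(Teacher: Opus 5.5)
Your proof is correct and follows the paper's argument: an annihilating decomposition, Lemma~\ref{lem:EALEB}, the clique bound $e(B)\le\binom{|B|}{2}$, and the K\"onig--Egerv\'ary identity combine to give $\mu(G)\le |B|(|B|+1)/2$, hence $|B|\ge s(\mu(G))$ and $a(G)-\alpha(G)=\mu(G)-|B|\le\mu(G)-s(\mu(G))$. The only difference is cosmetic: the paper obtains $\mu(G)\le |B|+e(A)$ by counting the edges of a maximum matching (at most $|B|$ of them meet $B$, and the rest lie in $G[A]$), whereas you get the same inequality from $|B|=\mu(G)-x$ together with $x\le\tau(G[A])\le e(A)$.
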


\begin{proof}
Let $\langle A,B\rangle$ be an annihilating decomposition of $G$, and put
\[
        r=|B|=n(G)-a(G).
\]
By Lemma \ref{lem:EALEB}, $e(A)\leq e(B)$.  Since $|B|=r$,
\[
        e(B)\leq \binom{r}{2}.
\]
Let $M$ be a maximum matching.  At most $r$ edges of $M$ are incident with vertices of $B$.  Every remaining edge of $M$ lies in $G[A]$, so the number of remaining edges is at most $e(A)$.  Therefore,
\[
        \mu(G)=|M|\leq r+e(A)\leq r+e(B)\leq r+\binom{r}{2}=\frac{r(r+1)}2.
\]
Hence
\[
        r\geq \ceil{\frac{\sqrt{8\mu(G)+1}-1}{2}}=s(\mu(G)).
\]
Since $G$ is K\"onig--Egerv\'ary,
\[
        a(G)-\alpha(G)=n(G)-r-\alpha(G)=\mu(G)-r.
\]
Combining the last two displays gives the result.
\end{proof}

\begin{theorem}[Sharpness of the K\"onig--Egerv\'ary bound]\label{thm:KE_strong_sharp}
For $\mu=1$ there is a connected K\"onig--Egerv\'ary graph attaining equality in Theorem \ref{thm:KE_strong}.  For every $\mu\geq 2$, there is a connected non-bipartite K\"onig--Egerv\'ary graph $G$ with $\mu(G)=\mu$ and
\[
        a(G)-\alpha(G)
        =
        \mu-
        \ceil{\frac{\sqrt{8\mu+1}-1}{2}}.
\]
Thus, Theorem \ref{thm:KE_strong} is best possible for every prescribed matching number.
\end{theorem}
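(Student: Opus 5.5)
The plan is to realize equality in every inequality of the proof of Theorem~\ref{thm:KE_strong}. Fix $\mu\ge 1$ and let $r=s(\mu)$, which is the least integer with $r(r+1)/2\ge\mu$. Put $t=\mu-r$. By minimality of $r$ we have $t\le \binom r2$, and $t\ge0$ because $r-1<r$ would otherwise already give $(r-1)r/2\ge\mu$.

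\textbf{Construction.} Let $B$ induce a clique $K_r$, and attach one pendant leaf to each vertex of $B$. Add $t$ disjoint edges $u_iv_i$, and join each $u_i$ to one vertex of $B$, chosen arbitrarily. Let $A$ consist of the $r$ leaves together with all $u_i,v_i$. The resulting graph $G$ is connected and has
\[
        n=2r+2t,\qquad m=\binom r2+r+2t.
\]

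\textbf{Step 1: $A$ is annihilating.} The degree sum over $A$ is $r+3t$. Hence $A$ is annihilating exactly when $t\le\binom r2$, which holds by the choice of $r$.

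\textbf{Step 2: $G$ is K\"onig--Egerv\'ary with $\mu(G)=r+t$.} The leaf edges together with the edges $u_iv_i$ form a perfect matching, so $\mu(G)=r+t$. The leaves together with all $v_i$ form an independent set of size $r+t$. Since $\alpha\le n-\mu$ always, we get $\alpha(G)=r+t$, so $G$ is K\"onig--Egerv\'ary and $\mu(G)=\mu$.

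\textbf{Step 3: $a(G)=|A|$.} This step is the one I expect to be the delicate point. I will not compare degrees by hand; instead I will reuse the proof of Theorem~\ref{thm:KE_strong}. That proof shows that for any annihilating decomposition of a K\"onig--Egerv\'ary graph, the complement has size $n-a(G)\ge s(\mu)=r$. Hence $a(G)\le n-r=|A|$. Combined with Step~1, this gives $a(G)=|A|=r+2t$. Therefore
\[
        a(G)-\alpha(G)=t=\mu-s(\mu),
\]
which is equality in Theorem~\ref{thm:KE_strong}.

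\textbf{Step 4: non-bipartiteness and small cases.} For $r\ge3$ the clique on $B$ contains a triangle, so $G$ is not bipartite. The value $r=2$ occurs exactly for $\mu\in\{2,3\}$, and these cases need separate handling.
\begin{itemize}
\item For $\mu=3$ (so $t=1$), join $u_1$ to both vertices of $B$. Then $u_1$ has degree $3$, and the degree sum over $A$ is $1+1+3+1=6=m$. So $A$ is still annihilating, the matching and independent-set arguments are unchanged, and $u_1b_1b_2$ is a triangle.
\item For $\mu=2$ (so $t=0$), take a triangle with one pendant vertex. Its degree sequence is $1,2,2,3$ with $m=4$, so $a=2$. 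Also $\alpha=\mu=2$, so the graph is K\"onig--Egerv\'ary and $a-\alpha=0=2-s(2)$.
\item For $\mu=1$, the graph $K_2$ gives $a-\alpha=0=1-s(1)$.
\end{itemize}
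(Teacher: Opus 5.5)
Your proposal is correct and uses the same extremal graphs as the paper. For $\mu\ge 4$ this is the clique $K_r$ with a pendant at each vertex plus $t$ edges $u_jv_j$ hung from the clique; for $\mu\le 3$ it is $K_2$, the triangle with one pendant, and (your modified $\mu=3$ graph) exactly the paper's triangle with a pendant leaf at every vertex. The only real difference is how you certify $a(G)=r+2t$: you take the upper bound directly from Theorem~\ref{thm:KE_strong}, so only $t\le\binom r2$ is needed, whereas the paper checks via $r+t>\binom r2$ that adding a clique vertex of degree at least $r$ would exceed $m(G)$; both arguments are valid.
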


\begin{proof}
For $\mu=1$, take $K_2$.  For $\mu=2$, take a triangle with one pendant leaf attached to one triangle vertex.  This graph has $\alpha=\mu=2$ and $a=2$, so equality holds.  For $\mu=3$, take a triangle and attach one pendant leaf to each triangle vertex.  Then $\alpha=\mu=3$, the degree sequence is $1,1,1,3,3,3$, and $m=6$, so $a=4$ and $a-\alpha=1$, which is the asserted value.

Now let $\mu\geq 4$, and put
\[
        r=\ceil{\frac{\sqrt{8\mu+1}-1}{2}},
        \qquad
        t=\mu-r.
\]
Then $r\geq 3$.  By definition, $r$ is the least integer satisfying
\[
        \mu\leq \binom{r+1}{2}.
\]
Thus
\[
        \binom r2<\mu\leq \binom{r+1}{2}.
\]
Equivalently,
\[
        t\leq \binom r2,
        \qquad
        r+t>\binom r2.
\]
Construct $G$ as follows.  Start with a clique $B=K_r$ on vertices $b_1,\ldots,b_r$.  Add pendant vertices $p_1,\ldots,p_r$, where $p_i$ is adjacent only to $b_i$.  Finally, add $t$ disjoint edges $u_jv_j$, $1\leq j\leq t$, and join every $u_j$ to $b_1$.

The graph is connected and non-bipartite.  It has $2r+2t=2\mu$ vertices and has a perfect matching consisting of the edges $b_ip_i$ and $u_jv_j$, so $\mu(G)=\mu$.  Since $\alpha(G)+\mu(G)\leq n(G)=2\mu$ and the set
\[
        \{p_1,\ldots,p_r\}\cup \{v_1,\ldots,v_t\}
\]
is independent of size $r+t=\mu$, we have $\alpha(G)=\mu$; hence, $G$ is K\"onig--Egerv\'ary.

The number of edges is
\[
        m(G)=\binom r2+r+2t.
\]
There are $r+t$ vertices of degree $1$, namely the $p_i$ and the $v_j$, and there are $t$ vertices $u_j$ of degree $2$.  The sum of the degrees of these $r+2t$ vertices is
\[
        r+t+2t=r+3t.
\]
Since $t\leq \binom r2$, this sum is at most $m(G)$.  Adding any clique vertex increases the sum by at least $r$, and this would exceed $m(G)$ because $r+t>\binom r2$.  Hence
\[
        a(G)=r+2t.
\]
Therefore,
\[
        a(G)-\alpha(G)=(r+2t)-(r+t)=t=\mu-r,
\]
which is the claimed equality.
\end{proof}

\medskip
The equality examples in Theorem~\ref{thm:KE_strong_sharp} attain the new exact bound of Theorem~\ref{thm:KE_strong}.  The older estimate $a(G)-\alpha(G)\leq \mu(G)-2$ for non-bipartite K\"onig--Egerv\'ary graphs is weaker once $\mu(G)\geq4$, because then $s(\mu(G))\geq3$.

\begin{corollary}\label{cor:KE_nonbip}
If $G$ is a non-bipartite K\"onig--Egerv\'ary graph, then
\[
        a(G)-\alpha(G)\leq \mu(G)-2.
\]
Moreover, if $\mu(G)\geq 4$, then
\[
        a(G)-\alpha(G)\leq \mu(G)-3.
\]
\end{corollary}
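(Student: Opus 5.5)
The second assertion follows directly from Theorem~\ref{thm:KE_strong}. The first assertion does not, and it needs a separate argument. The plan for the second part is to compute $s(\mu)$ at the threshold. If $\mu(G)\geq 4$, then $8\mu(G)+1\geq 33>25$, so $\sqrt{8\mu(G)+1}>5$ and $(\sqrt{8\mu(G)+1}-1)/2>2$. Hence $s(\mu(G))\geq 3$, and Theorem~\ref{thm:KE_strong} gives $a(G)-\alpha(G)\leq\mu(G)-3$. Non-bipartiteness is not needed for this part.

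For the first assertion, monotonicity of $s$ settles only the range $\mu(G)\geq 3$. There $8\mu+1\geq 25$ gives $s(\mu)\geq 2$, so $a-\alpha\leq\mu-2$. The cases $\mu(G)\leq 2$ require non-bipartiteness, because $s(1)=1$ and $s(2)=2$ only give $a-\alpha\leq\mu-1$ when $\mu=1$. I will argue directly with the decomposition from the proof of Theorem~\ref{thm:KE_strong}, writing $r=|B|$ and using $a(G)-\alpha(G)=\mu(G)-r$. Since $G$ is non-bipartite, it contains an odd cycle, hence a matching of size at least $1$ inside that cycle. A triangle gives $\mu\geq 1$, and a longer odd cycle gives $\mu\geq 2$. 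Applying Lemma~\ref{lem:KE_mu_le_alpha} via $n=\alpha+\mu\geq 2\mu$ then pins down the small cases.

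\textbf{Case $\mu(G)=1$.} All edges of $G$ then lie in a triangle or a star. A non-bipartite graph with $\mu=1$ is a triangle plus isolated vertices. This graph is not K\"onig--Egerv\'ary, since $\alpha+\mu=(n-2)+1<n$. So this case cannot occur.

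\textbf{Case $\mu(G)=2$.} The required bound is $a(G)\leq\alpha(G)$, that is, $r\geq 2$. We have $r\geq 1$ because $G$ has an edge. If $r=1$, then $a(G)=n-1$, and Lemma~\ref{lem:a_nminus1} makes $G$ a star plus isolated vertices. That graph is bipartite, a contradiction. Hence $r\geq 2$, and $a-\alpha=\mu-r\leq\mu-2$.

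This case is the main obstacle, and the point to get right is that Theorem~\ref{thm:KE_strong} alone is insufficient there. The way around it is to combine Lemma~\ref{lem:a_nminus1} with non-bipartiteness. The same argument in fact shows $r\geq 2$ in general, which gives the first bound uniformly for all $\mu$.
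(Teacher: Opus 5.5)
Your proof is correct. The second assertion is handled exactly as in the paper.

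For the first assertion, your closing uniform argument is precisely the alternative proof recorded in the Remark after the corollary: non-bipartiteness rules out $r=|B|\le 1$ via Lemma~\ref{lem:a_nminus1} (and $r=0$ would force $m=0$), so $a-\alpha=\mu-r\le\mu-2$ by the K\"onig--Egerv\'ary identity. The paper's main proof takes a shorter path. Non-bipartiteness forbids an independent set of size $n-1$, so $\mu=n-\alpha\ge 2$, and then $s(\mu)\ge 2$ can be read off in Theorem~\ref{thm:KE_strong}.

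One correction to your commentary: Theorem~\ref{thm:KE_strong} is \emph{not} insufficient at $\mu=2$. Since $s(2)=\lceil(\sqrt{17}-1)/2\rceil=2$, it already gives $a-\alpha\le 0$, and your own remark that $s(2)=2$ says as much. So the monotonicity argument settles all $\mu\ge 2$, not just $\mu\ge 3$. The only values where the theorem falls short are $\mu\le 1$, and non-bipartiteness together with the K\"onig--Egerv\'ary property excludes these, as your $\mu=1$ case shows. The ``main obstacle'' you identify is therefore illusory. Your separate $\mu=2$ case is valid but redundant.
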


\begin{proof}
Since $G$ is non-bipartite, it cannot have an independent set of size $n(G)-1$; otherwise all edges would be incident with the one vertex outside such a set, making $G$ bipartite.  Thus, $\alpha(G)\leq n(G)-2$.  Because $G$ is K\"onig--Egerv\'ary, $\mu(G)=n(G)-\alpha(G)\geq 2$, and hence, $s(\mu(G))\geq 2$.  This gives the first inequality from Theorem \ref{thm:KE_strong}.  If $\mu(G)\geq 4$, then $s(\mu(G))\geq 3$, giving the second inequality.
\end{proof}

\begin{remark}
Lemma \ref{lem:a_nminus1} gives another quick proof of the first inequality in Corollary \ref{cor:KE_nonbip}.  A non-bipartite graph cannot be a star plus isolated vertices, so $a(G)\leq n(G)-2$.  Since $G$ is K\"onig--Egerv\'ary, $n(G)=\alpha(G)+\mu(G)$, and hence, $a(G)-\alpha(G)\leq \mu(G)-2$.
\end{remark}

\begin{proposition}\label{prop:KE_sharp}
The bound $a(G)-\alpha(G)\leq \mu(G)-2$ for non-bipartite K\"onig--Egerv\'ary graphs is attained for $\mu=2$ and for $\mu=3$.  For $\mu\geq 4$, equality in this bound is impossible.
\end{proposition}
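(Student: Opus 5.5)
The statement has two parts: attainment of $a(G)-\alpha(G)=\mu(G)-2$ for $\mu=2$ and $\mu=3$, and its impossibility for $\mu\geq 4$. Both follow from results already proved, so the plan is to reuse the constructions in the proof of Theorem~\ref{thm:KE_strong_sharp} and the bound of Corollary~\ref{cor:KE_nonbip}.

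\textbf{Attainment.} I will reuse the two small graphs from the proof of Theorem~\ref{thm:KE_strong_sharp}.

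For $\mu=2$, take the triangle with one pendant leaf. It is non-bipartite and, as computed there, has $\alpha=\mu=2$, so it is K\"onig--Egerv\'ary. Its degree sequence is $1,2,2,3$ with $m=4$. Hence $a=2$, since $1+2\leq 4<1+2+2$. This gives $a-\alpha=0=\mu-2$.

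For $\mu=3$, take the triangle with a pendant leaf at each triangle vertex. It is non-bipartite and K\"onig--Egerv\'ary with $\alpha=\mu=3$. Its degree sequence is $1,1,1,3,3,3$ with $m=6$, so $a=4$. This gives $a-\alpha=1=\mu-2$.

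\textbf{Impossibility for $\mu\geq4$.} This is the second inequality of Corollary~\ref{cor:KE_nonbip}: for a non-bipartite K\"onig--Egerv\'ary graph with $\mu(G)\geq 4$ we have $a(G)-\alpha(G)\leq \mu(G)-3<\mu(G)-2$. The underlying fact is that $s(\mu)\geq 3$ once $\mu\geq 4$, because $\binom{3}{2}=3<\mu$ forces $r\geq 3$ in Theorem~\ref{thm:KE_strong}.

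No step is a real obstacle. The only point needing care is the annihilation number in the $\mu=2$ example: adding the third smallest degree must overshoot $m$, and $1+2+2=5>4$ confirms it does.
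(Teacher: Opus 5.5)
Your proof is correct and follows the paper's argument: the same triangle-plus-leaf example for $\mu=2$, and Corollary~\ref{cor:KE_nonbip} to rule out equality for $\mu\geq 4$. For $\mu=3$ the paper uses a one-parameter family, namely a triangle $xyz$ with $k$ pendant leaves attached to each of $x$ and $y$ and one leaf attached to $z$. Your graph is exactly the $k=1$ member of that family, which suffices for the statement.
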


\begin{proof}
For $\mu=2$, take a triangle with one pendant leaf attached to one triangle vertex.  Its degree sequence is $1,2,2,3$, and $m=4$, so $a=2$.  Also $\alpha=2$ and $\mu=2$, whence $a-\alpha=0=\mu-2$.

For $\mu=3$, fix $k\geq 1$ and take a triangle with vertices $x,y,z$.  Attach $k$ pendant leaves to $x$, attach $k$ pendant leaves to $y$, and attach one pendant leaf to $z$.  The graph has $2k+4$ vertices and is K\"onig--Egerv\'ary with $\alpha=2k+1$ and $\mu=3$.  Its degree sequence is
\[
        \underbrace{1,\ldots,1}_{2k+1},\;3,\;k+2,\;k+2,
\]
and $m=2k+4$.  Hence, $a=2k+2$, and so
\[
        a-\alpha=1=\mu-2.
\]

Finally, if $\mu\geq 4$, Corollary \ref{cor:KE_nonbip} gives $a-\alpha\leq \mu-3$, so equality in $a-\alpha\leq \mu-2$ is impossible.
\end{proof}

\begin{corollary}\label{cor:KE_alpha}
If $G$ is a non-bipartite K\"onig--Egerv\'ary graph, then
\[
        a(G)\leq 2\alpha(G)-2.
\]
If, in addition, $\mu(G)\geq 4$, then
\[
        a(G)\leq 2\alpha(G)-3.
\]
\end{corollary}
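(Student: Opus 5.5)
The statement is Corollary~\ref{cor:KE_alpha}. It should follow at once from Corollary~\ref{cor:KE_nonbip} combined with Lemma~\ref{lem:KE_mu_le_alpha}. The argument mirrors the one used for Corollaries~\ref{cor:tree_alpha} and~\ref{cor:bip_alpha}: bound the gap by a function of $\mu(G)$, and then replace $\mu(G)$ by the larger quantity $\alpha(G)$.

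For the first inequality, let $G$ be a non-bipartite K\"onig--Egerv\'ary graph. Corollary~\ref{cor:KE_nonbip} gives $a(G)-\alpha(G)\leq \mu(G)-2$. Lemma~\ref{lem:KE_mu_le_alpha} gives $\mu(G)\leq\alpha(G)$. Hence
\[
        a(G)-\alpha(G)\leq \alpha(G)-2,
\]
and adding $\alpha(G)$ to both sides yields $a(G)\leq 2\alpha(G)-2$.

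For the second inequality, assume in addition that $\mu(G)\geq 4$. The second part of Corollary~\ref{cor:KE_nonbip} gives $a(G)-\alpha(G)\leq\mu(G)-3$. Applying $\mu(G)\leq\alpha(G)$ once more gives $a(G)\leq 2\alpha(G)-3$.

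There is no real obstacle. The only point to check is that the substitution $\mu\leq\alpha$ goes in the right direction. It does, because each right-hand side $\mu-c$ is increasing in $\mu$, so replacing $\mu$ by the larger $\alpha$ only weakens the bound. All hypotheses needed for the cited results (K\"onig--Egerv\'ary, non-bipartite, and $\mu(G)\geq4$ for the second part) are exactly those in the statement.
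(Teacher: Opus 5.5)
Your proposal is correct and matches the paper's proof: both combine Corollary~\ref{cor:KE_nonbip} (the gap bounds $\mu(G)-2$ and, when $\mu(G)\geq 4$, $\mu(G)-3$) with Lemma~\ref{lem:KE_mu_le_alpha} ($\mu(G)\leq\alpha(G)$), then add $\alpha(G)$ to both sides. Your check that replacing $\mu$ by the larger $\alpha$ only weakens the bound is the right one.
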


\begin{proof}
By Corollary~\ref{cor:KE_nonbip}, $a(G)-\alpha(G)\leq \mu(G)-2$.  Lemma~\ref{lem:KE_mu_le_alpha} gives $\mu(G)\leq \alpha(G)$, and hence, $a(G)\leq 2\alpha(G)-2$.  If $\mu(G)\geq4$, then Corollary~\ref{cor:KE_nonbip} gives $a(G)-\alpha(G)\leq \mu(G)-3$, so the same argument yields $a(G)\leq2\alpha(G)-3$.
\end{proof}

\section{General graphs with prescribed matching number}\label{section:general}

The preceding sections treat special graph classes.  For arbitrary graphs the exact extremal function in terms of the matching number alone is larger, but it has a clean closed form.  We first keep the proof in its natural optimization form and then evaluate that optimization explicitly.

For an integer $\mu\geq 0$, define
\[
        \Phi(\mu)=
        \max_{0\leq c\leq 2\mu}
        \min\left\{
        2\mu-c,
        \floor{\frac{\binom c2+\mu-\floor{c/2}}{2}}
        \right\}.
\]
Also define
\[
        \Gamma(0)=0,
        \qquad
        \Gamma(\mu)=2\mu+1-\ceil{\sqrt{6\mu}}\quad(\mu\geq 1).
\]

\begin{lemma}\label{lem:Phi_closed}
For every integer $\mu\geq 0$,
\[
        \Phi(\mu)=\Gamma(\mu).
\]
\end{lemma}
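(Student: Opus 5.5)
The plan is to evaluate the max–min directly by locating where the two competing terms cross. Write $f(c)=2\mu-c$ and
\[
g(c)=\floor{\frac{\binom c2+\mu-\floor{c/2}}{2}}.
\]
First I dispose of $\mu=0$: then only $c=0$ is allowed and $\Phi(0)=\min\{0,0\}=0=\Gamma(0)$. From now on $\mu\geq1$.

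\textbf{Monotonicity of the two terms.} The term $f$ is strictly decreasing in $c$. The term $g$ is nondecreasing, since passing from $c$ to $c+1$ changes the numerator by
\[
c-\bigl(\floor{(c+1)/2}-\floor{c/2}\bigr)\geq c-1\geq 0
\]
for $c\geq1$, and $g(0)=g(1)$. Consequently the predicate $P(c):\ g(c)\geq f(c)$ is monotone in $c$: once true, it stays true.

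\textbf{Reduction to the crossing point.} Let $c_0$ be the least $c\in\{0,\dots,2\mu\}$ with $P(c)$. Such a $c$ exists because $P(2\mu)$ holds, as $f(2\mu)=0\leq g(2\mu)$.
\begin{itemize}
\item For $c\geq c_0$ the minimum equals $f(c)\leq f(c_0)$.
\item For $c<c_0$ the minimum equals $g(c)\leq g(c_0-1)<f(c_0-1)=2\mu-c_0+1$. Since these are integers, $g(c_0-1)\leq 2\mu-c_0$.
\end{itemize}
Hence $\Phi(\mu)=2\mu-c_0$, and it remains only to show $c_0=\ceil{\sqrt{6\mu}}-1$.

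\textbf{Identifying $c_0$.} Since $2\mu-c$ is an integer, $P(c)$ is equivalent to the floor-free inequality
\[
\binom c2+\mu-\floor{c/2}\geq 4\mu-2c.
\]
Write $c=2\floor{c/2}+\varepsilon$ with $\varepsilon\in\{0,1\}$. Then
\[
\binom c2+2c-\floor{c/2}=\frac{c^2+2c+\varepsilon}{2},
\]
so $P(c)$ becomes
\[
(c+1)^2-1+\varepsilon\geq 6\mu.
\]
\begin{itemize}
\item If $c$ is odd ($\varepsilon=1$), this is exactly $(c+1)^2\geq 6\mu$.
\item If $c$ is even ($\varepsilon=0$), it reads $(c+1)^2\geq 6\mu+1$. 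Here $c+1$ is odd while $6\mu$ is even, so $(c+1)^2\neq 6\mu$, and the condition again coincides with $(c+1)^2\geq6\mu$.
\end{itemize}
Thus in all cases $P(c)\iff c+1\geq\sqrt{6\mu}$, so $c_0=\ceil{\sqrt{6\mu}}-1$. One checks this lies in $[0,2\mu]$: it is nonnegative, and $\ceil{\sqrt{6\mu}}\leq 2\mu+1$ for $\mu\geq1$. Therefore
\[
\Phi(\mu)=2\mu+1-\ceil{\sqrt{6\mu}}=\Gamma(\mu).
\]

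The step I expect to need the most care is this parity bookkeeping. The floor in $g$ and the term $\floor{c/2}$ must combine so that the boundary case $(c+1)^2=6\mu$ behaves identically for both parities. That is exactly what the odd-square-versus-even observation supplies, and I would double-check it on small $\mu$, for instance $\mu=1,2,6$.
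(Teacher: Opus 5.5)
Your proof is correct and follows the paper's argument essentially step for step: both use the monotonicity/crossing-point reduction and the same floor-free rewriting of the crossing condition as $\binom c2+2c-\floor{c/2}\geq 3\mu$. Your parity split is an unpacked form of the paper's identity $\binom c2+2c-\floor{c/2}=\floor{(c+1)^2/2}$, and you are a bit more explicit than the paper in checking that $g$ is nondecreasing and that $c_0=\ceil{\sqrt{6\mu}}-1$ lies in $[0,2\mu]$.
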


\begin{proof}
The case $\mu=0$ is immediate, so assume $\mu\geq 1$.  Put
\[
        A(c)=2\mu-c,
        \qquad
        B(c)=\floor{\frac{\binom c2+\mu-\floor{c/2}}{2}}.
\]
The function $A(c)$ is strictly decreasing, while $B(c)$ is nondecreasing.  Since $A(c)$ is integral,
\[
        B(c)\geq A(c)
\]
if and only if
\[
        \binom c2+\mu-\floor{c/2}\geq 2(2\mu-c),
\]
or equivalently
\[
        \binom c2+2c-\floor{c/2}\geq 3\mu.
\]
For every integer $c\geq 0$,
\[
        \binom c2+2c-\floor{c/2}
        =\floor{\frac{(c+1)^2}{2}}.
\]
Thus, the least $c$ for which $B(c)\geq A(c)$ is
\[
        c_\mu=\ceil{\sqrt{6\mu}}-1.
\]
For $c<c_\mu$, the minimum is $B(c)$, and by the minimality of $c_\mu$ we have
\[
        B(c)\leq B(c_\mu-1)<A(c_\mu-1)=A(c_\mu)+1.
\]
Since $B(c)$ is integral, $B(c)\leq A(c_\mu)$.  For $c\geq c_\mu$, the minimum is at most $A(c)\leq A(c_\mu)$.  Hence
\[
        \Phi(\mu)=A(c_\mu)=2\mu+1-\ceil{\sqrt{6\mu}}=\Gamma(\mu).
\]
\end{proof}

\begin{lemma}\label{lem:tau_edges_matching}
For every graph $H$,
\[
        2\tau(H)\leq m(H)+\mu(H).
\]
Equivalently,
\[
        m(H)\geq 2\tau(H)-\mu(H).
\]
\end{lemma}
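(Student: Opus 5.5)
The plan is induction on the number of vertices of $H$, removing one vertex of maximum degree. The inequality is stable under vertex deletion whenever the deleted vertex has degree at least two. The base of the induction consists of graphs with maximum degree at most one, and there the inequality can be checked directly. It is convenient to prove the equivalent form $m(H)\geq 2\tau(H)-\mu(H)$.

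\emph{Base case.} Suppose $\Delta(H)\leq 1$; this includes the edgeless graph and the empty graph. Then $H$ is a matching of $m(H)$ edges together with isolated vertices. Hence $\mu(H)=m(H)$, and $\tau(H)=m(H)$, since one endpoint per edge is necessary and sufficient for a cover. Therefore $2\tau(H)=2m(H)=m(H)+\mu(H)$, so the inequality holds, in fact with equality.

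\emph{Inductive step.} Suppose $\Delta(H)\geq 2$, and let $v$ be a vertex with $d(v)\geq 2$. Put $H'=H-v$. I will use three elementary facts.
\begin{itemize}
\item Adding $v$ to any vertex cover of $H'$ gives a vertex cover of $H$, so $\tau(H')\geq \tau(H)-1$.
\item Every matching of $H'$ is a matching of $H$, so $\mu(H')\leq \mu(H)$.
\item Deleting $v$ removes exactly its incident edges, so $m(H')=m(H)-d(v)$.
\end{itemize}
By the induction hypothesis applied to $H'$,
\[
        m(H)-d(v)=m(H')\geq 2\tau(H')-\mu(H')\geq 2\tau(H)-2-\mu(H).
\]
Therefore $m(H)\geq 2\tau(H)-\mu(H)+\bigl(d(v)-2\bigr)\geq 2\tau(H)-\mu(H)$, because $d(v)\geq 2$. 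This completes the induction.

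I do not expect a real obstacle. The one point needing care is that the loss of $2$ from the drop in $\tau$ is exactly paid for by the $d(v)\geq 2$ edges removed with $v$. This is why the induction stops exactly at graphs of maximum degree at most one, and those are precisely the graphs handled by the direct count in the base case. For this reason I prefer this induction to a more structural argument through augmenting paths relative to a maximum matching.
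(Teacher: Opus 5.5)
Your induction is correct. Deleting a vertex lowers $\tau$ by at most one and does not raise $\mu$, so the loss of at most $2$ in $2\tau$ is paid for by the $d(v)\ge 2$ deleted edges. Every graph either has $\Delta\le 1$, where $m=\tau=\mu$, or contains a vertex of degree at least two, so the base case is valid.

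This is a genuinely different route from the paper. The paper fixes a maximum matching $M$ and picks one endpoint of each edge of $M$ uniformly at random. By maximality of $M$ and simplicity of $H$, each non-matching edge is then left uncovered with probability at most $1/2$. It patches every uncovered edge with one endpoint, which gives a vertex cover of expected size at most $\mu+(m-\mu)/2$. That argument builds the cover around a matching and uses only maximality of $M$, so it actually yields $2\tau(H)\le m(H)+|M|$ for every maximal matching $M$.

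Your argument is deterministic and needs nothing beyond monotonicity of $\tau$ and $\mu$ under vertex deletion. Unwinding it gives an explicit cover: the deleted vertices plus one endpoint of each edge of the induced matching $H_0$ left at the end. It also gives the bound $2\tau(H)\le m(H)+m(H_0)$.

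Your route also settles the equality case at no extra cost. A vertex of degree at least $3$ makes your inductive step strict, so equality forces $\Delta(H)\le 2$. Since all three quantities are additive over components, checking paths and cycles shows that equality holds exactly when every component of $H$ is $K_1$, $K_2$ or $K_3$. This is precisely the shape of the low-degree side in the paper's extremal construction for Theorem~\ref{thm:general_exact}. It also turns the condition $2\tau(G[A])=e(A)+\mu(G[A])$ in Proposition~\ref{prop:general_certificate} into an explicit structural description of $G[A]$.
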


\begin{proof}
Let $M$ be a maximum matching of $H$, with $|M|=\mu(H)$.  Since $M$ is maximal, every edge of $H$ has at least one endpoint incident with an edge of $M$.  Choose independently and uniformly one endpoint from each edge of $M$, and let $S$ be the set of chosen vertices.  The set $S$ covers all edges of $M$.  If an edge outside $M$ has one endpoint not incident with $M$, then it is uncovered by $S$ with probability $1/2$; if its endpoints lie on two distinct edges of $M$, then it is uncovered with probability $1/4$.  In all cases, the probability that a non-matching edge is uncovered is at most $1/2$.

Now add one endpoint of every uncovered non-matching edge to $S$.  The resulting set is a vertex cover.  Its expected size is at most
\[
        \mu(H)+\frac{m(H)-\mu(H)}2
        =\frac{m(H)+\mu(H)}2.
\]
Therefore, $H$ has a vertex cover of size at most $(m(H)+\mu(H))/2$, which is the desired inequality.
\end{proof}

\begin{lemma}\label{lem:e_minus_matching_complete}
If $H$ has $c$ vertices, then
\[
        m(H)-\mu(H)\leq \binom c2-\floor{c/2}.
\]
\end{lemma}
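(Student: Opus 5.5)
Since $m(H)\leq\binom c2$ always, the bound is immediate once $\mu(H)\geq\floor{c/2}$, so I will reduce the statement to a count of the edges that can be present when the matching is deficient.  Put $\nu=\mu(H)$ and $k=\floor{c/2}-\nu\geq 0$.  If $k=0$, then $m(H)-\mu(H)\leq\binom c2-\floor{c/2}$ is trivial.  So the real content is that every graph on $c$ vertices with matching number $\nu<\floor{c/2}$ misses at least $\floor{c/2}-\nu$ pairs more than the trivial count.  Equivalently, I will show
\[
        m(H)\leq \binom c2-\floor{c/2}+\nu .
\]

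For this I will use the classical Erd\H{o}s--Gallai extremal bound for matchings.  A graph on $c$ vertices with $\mu(H)=\nu$ has at most
\[
        \max\left\{\binom{2\nu+1}{2},\ \binom{\nu}{2}+\nu(c-\nu)\right\}
\]
edges.  If I prefer to stay self-contained, I can derive this from the Tutte--Berge formula.  That formula gives a set $S$ with $c-2\nu=\operatorname{odd}(H-S)-|S|$.  The number of edges is then maximized when $S$ is complete and joined to everything, and the components of $H-S$ are cliques with all but one a single vertex.  A convexity argument over the sizes of the odd components then yields the displayed maximum.

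Next I will verify that each of the two terms is at most $\binom c2-\floor{c/2}+\nu$ when $\nu\leq\floor{c/2}-1$.  For the second term, the number of missing pairs equals $\binom{c-\nu}{2}$.  I need $\binom{c-\nu}{2}\geq\floor{c/2}-\nu$, which holds because $c-\nu\geq\floor{c/2}+1\geq 2$ and hence $\binom{c-\nu}{2}\geq c-\nu-1\geq\floor{c/2}-\nu$.  For the first term, the number of missing pairs is $\binom c2-\binom{2\nu+1}2$.  Here $2\nu+1\leq c-1$, and removing even one vertex from $K_c$ deletes at least $c-1\geq\floor{c/2}-\nu$ pairs.  Both estimates are routine.

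The main obstacle is justifying the Erd\H{o}s--Gallai bound cleanly, since the paper has not stated it.  My first attempt would be a direct elementary argument instead.  Take a maximum matching $M$ and let $U$ be the set of unmatched vertices, so that $|U|=c-2\nu\geq 2$.  The set $U$ is independent, which already accounts for $\binom{|U|}2$ missing pairs.  If $|U|\geq 3$, then $\binom{|U|}{2}\geq |U|-1$, and $|U|-1\geq \floor{c/2}-\nu$ follows from $|U|=c-2\nu$.  If $|U|=2$, I need just one missing pair, and the pair $U$ itself is missing.  This direct argument avoids Erd\H{o}s--Gallai entirely, so I would present it as the proof.  The only point I must check is the inequality $c-2\nu-1\geq\floor{c/2}-\nu$, which reduces to $\lceil c/2\rceil-\nu\geq 1$ and holds whenever $\nu<\floor{c/2}$.
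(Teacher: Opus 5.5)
Your direct argument is correct, and it takes a different route from the paper's proof. The paper never looks inside $H$. It adds the missing edges one at a time until $K_c$ is reached. Each addition raises $m$ by exactly $1$ and $\mu$ by at most $1$, so $m-\mu$ is nondecreasing under edge addition and is maximized by $K_c$, where it equals $\binom c2-\floor{c/2}$.

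You instead exhibit the missing pairs explicitly. The vertices $U$ left uncovered by a maximum matching are pairwise non-adjacent by maximality, so $m(H)\le\binom c2-\binom{c-2\mu(H)}{2}$. An arithmetic check using $|U|=c-2\mu(H)\ge 2(\floor{c/2}-\mu(H))$ then finishes the proof.

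The paper's monotonicity argument is shorter and case-free. It also proves the more general fact that $m-\mu$ can only grow when edges are added on a fixed vertex set. Your argument yields a quantitatively stronger inequality, since the deficit is quadratic in $\floor{c/2}-\mu(H)$ rather than linear. That extra strength is not needed in Theorem~\ref{thm:general_exact}, because there the extremal case is the clique anyway.

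Two small simplifications are available. The Erd\H{o}s--Gallai discussion can be dropped entirely, as you suspected. The split into $|U|=2$ and $|U|\ge 3$ is also unnecessary, since $\binom u2\ge u-1$ holds for every integer $u\ge 0$.
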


\begin{proof}
Add the missing edges of $H$ one at a time until the complete graph $K_c$ is obtained.  Adding one edge increases the number of edges by $1$ and increases the matching number by at most $1$.  Thus, the quantity $m(H)-\mu(H)$ never decreases during this process.  Hence
\[
        m(H)-\mu(H)
        \leq m(K_c)-\mu(K_c)
        =\binom c2-\floor{c/2}.
\]
\end{proof}

\begin{theorem}[Exact general matching bound]\label{thm:general_exact}
For every finite simple graph $G$,
\[
        a(G)-\alpha(G)\leq \Gamma(\mu(G)).
\]
Equivalently, if $\mu(G)\geq 1$, then
\[
        a(G)-\alpha(G)
        \leq
        2\mu(G)+1-\ceil{\sqrt{6\mu(G)}}.
\]
Moreover, for every integer $\mu\geq 0$, there exists a graph $G$ with $\mu(G)=\mu$ and
\[
        a(G)-\alpha(G)=\Gamma(\mu).
\]
\end{theorem}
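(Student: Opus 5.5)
The plan is to prove the upper bound by reducing it to the optimization defining $\Phi$, and then to invoke Lemma~\ref{lem:Phi_closed}. The case $\mu(G)=0$ is trivial: $G$ is edgeless, so $a=\alpha=n$. Assume $\mu=\mu(G)\geq 1$. Fix an annihilating decomposition $\langle A,B\rangle$, put $c=|B|$ and $x=a(G)-\alpha(G)$, and derive two separate upper bounds on $x$.

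\emph{First bound.} Let $M$ be a maximum matching. The vertices not covered by $M$ form an independent set, so $\alpha(G)\geq n-2\mu$. Hence
\[
x\leq (n-c)-(n-2\mu)=2\mu-c.
\]
Since $x\geq 0$, this also gives $c\leq 2\mu$, so $c$ lies in the range of the maximum defining $\Phi$.

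\emph{Second bound.} As in the proof architecture, $x\leq |A|-\alpha(G[A])=\tau(G[A])$. Lemma~\ref{lem:tau_edges_matching} then gives $2x\leq e(A)+\mu(G[A])$. Lemma~\ref{lem:EALEB} gives $e(A)\leq e(B)$. Lemma~\ref{lem:e_minus_matching_complete} applied to $G[B]$ gives $e(B)\leq \binom c2-\floor{c/2}+\mu(G[B])$. Finally, a matching of $G[A]$ together with a matching of $G[B]$ is a matching of $G$, so $\mu(G[A])+\mu(G[B])\leq\mu$. Combining these,
\[
2x\leq \binom c2+\mu-\floor{c/2}.
\]
Since $x$ is an integer, $x\leq\floor{(\binom c2+\mu-\floor{c/2})/2}$. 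Therefore $x$ is at most the minimum of the two bounds for this particular $c$, hence at most $\Phi(\mu)$, which equals $\Gamma(\mu)$ by Lemma~\ref{lem:Phi_closed}.

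\emph{Sharpness.} I would use the "clique-budget" construction suggested by the tight cases of the chain above. Take $c=c_\mu=\ceil{\sqrt{6\mu}}-1$ and let $B$ induce $K_c$. If $c$ is odd, hang a pendant vertex on one clique vertex so that every clique vertex is matched. The set $A$ consists of $k$ disjoint triangles and $j$ disjoint edges, possibly together with isolated vertices, chosen so that three conditions hold. First, $2k+j=\Gamma(\mu)$; triangles are exactly the components where Lemma~\ref{lem:tau_edges_matching} is tight, and edges help tune the count. Second, the total matching number, namely $\ceil{c/2}+k+j$ plus the pendant contribution, equals $\mu$. Third, $e(A)=3k+j\leq\binom c2$, so that $\sum_{v\in A}d(v)=2e(A)\leq m(G)$ and $A$ is annihilating. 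Since $G$ is a disjoint union of components, $\alpha$ is additive: each triangle contributes $1$, each edge $1$, and the clique part contributes $1$ (or the pendant part contributes appropriately). This gives $|A|-\alpha(G)=2k+j$ plus a correction from the $B$ side that has to be accounted for.

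\emph{Main obstacle.} I expect the main difficulty to be this bookkeeping in the construction. One must check simultaneously three things. First, the integers $k,j\geq0$ exist, which uses $\binom c2+2c-\floor{c/2}\geq3\mu$ from the minimality of $c_\mu$. Second, no larger annihilating set exists: adding any clique vertex, of degree at least $c-1$, must overshoot $m(G)$, and this uses the reverse inequality at $c_\mu-1$. Third, the pendant and parity adjustments do not spoil the count $a-\alpha=\Gamma(\mu)$. If the pure triangles-and-clique version fails at the boundary, I would first try replacing one triangle by an edge, or shifting isolated vertices into $A$, which changes $a$ and $\alpha$ equally. I would verify small values of $\mu$ (for example $\mu=1,2$, with $K_4$ at $\mu=2$) by hand.
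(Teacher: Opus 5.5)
Your upper-bound argument is correct and is the paper's argument step for step: the first bound comes from $\tau(G)\le 2\mu$; the second comes from $x\le\tau(G[A])$, Lemmas~\ref{lem:tau_edges_matching}, \ref{lem:EALEB}, \ref{lem:e_minus_matching_complete} and $\mu(G[A])+\mu(G[B])\le\mu$; then Lemma~\ref{lem:Phi_closed} finishes. The sharpness part has a genuine gap, and it is exactly the ``correction from the $B$ side'' that you leave open. Suppose $B=K_c$ is a separate component and, as you require, no clique vertex can join the annihilating set. Then $a(G)=|A|$, while $\alpha(G)=\alpha(G[A])+\alpha(G[B])\ge\alpha(G[A])+1$. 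Hence $a(G)-\alpha(G)\le\tau(G[A])-1=2k+j-1=\Gamma(\mu)-1$.

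Concretely, for $\mu=4$ your recipe forces $c=4$, $k=2$, $j=0$, i.e.\ $G=K_4\cup 2K_3$. Here $m=12$ and the six degree-$2$ vertices use the whole budget, so $a=6$, $\alpha=3$, and $a-\alpha=3<4=\Gamma(4)$. For odd $c$ the pendant makes things worse. The clique-plus-pendant component has vertex cover number $c-1$ but matching number $(c+1)/2$, and your conditions $2k+j=\Gamma(\mu)=2\mu-c$ and $(c+1)/2+k+j=\mu$ force $j=-1$. Your fallback repairs cannot recover the lost unit either. Trading a triangle for an edge lowers $\tau(G[A])$ at the same matching number, and moving isolated vertices into $A$ changes $a$ and $\alpha$ equally.

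The missing idea is to keep $B$ from contributing to $\alpha$. With $s=\mu-\floor{c/2}$, the paper takes $H=qK_3\cup pK_2$ with $q=\Gamma(\mu)-s$ and $p=2s-\Gamma(\mu)=c-2\floor{c/2}$; for even $c$, $H$ is exactly your $A$. It then joins every vertex of $B=K_c$ to both vertices of the $K_2$ when $c$ is odd, or to all three vertices of one triangle when $c$ is even. The resulting graph is $K_{c+2}\cup qK_3$ or $K_{c+3}\cup(q-1)K_3$, and three facts follow:
\begin{itemize}
\item the joined block has the same matching number as its two pieces, so $\mu(G)=\mu$;
\item the absorbed component meets every maximum independent set of $H$, so $\alpha(G)=\alpha(H)$;
\item the cross-edges add the same amount to $m(G)$ and to the degree sum of $V(H)$, so $V(H)$ is annihilating as soon as $m(H)=2\Gamma(\mu)-s\le\binom c2$, which the proof of Lemma~\ref{lem:Phi_closed} supplies.
\end{itemize}
You never need $V(H)$ to be a maximum annihilating set, because the upper bound then forces equality. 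Your planned check that no clique vertex can be added is therefore unnecessary, and it also points the wrong way. In this extremal graph two or three clique vertices lie on the annihilating side, and that is what pays for the clique's unit of $\alpha$.
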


\begin{proof}
Let $\mu=\mu(G)$, let $\langle A,B\rangle$ be an annihilating decomposition, and put
\[
        x=a(G)-\alpha(G),
        \qquad
        c=|B|=n(G)-a(G).
\]
Since the endpoints of a maximum matching form a vertex cover, $\tau(G)=n(G)-\alpha(G)\leq 2\mu$.  Hence
\[
        x+c=n(G)-\alpha(G)\leq 2\mu,
\]
and so
\begin{equation}\label{eq:general_first_bound_9}
        x\leq 2\mu-c.
\end{equation}

Let $H=G[A]$ and $J=G[B]$.  Since $\alpha(G)\geq \alpha(H)$,
\[
        x=|A|-\alpha(G)\leq |A|-\alpha(H)=\tau(H).
\]
By Lemma \ref{lem:tau_edges_matching},
\[
        e(A)=m(H)\geq 2\tau(H)-\mu(H)\geq 2x-\mu(H).
\]
The matchings of $H$ and $J$ are vertex-disjoint, and hence
\[
        \mu(H)+\mu(J)\leq \mu.
\]
Using Lemma \ref{lem:EALEB} and Lemma \ref{lem:e_minus_matching_complete}, we get
\[
\begin{aligned}
        2x
        &\leq e(A)+\mu(H)  \\
        &\leq e(B)+\mu-\mu(J) \\
        &=\mu+\bigl(e(B)-\mu(J)\bigr) \\
        &\leq \mu+\binom c2-\floor{c/2}.
\end{aligned}
\]
Thus
\begin{equation}\label{eq:general_second_bound_9}
        x\leq
        \floor{\frac{\binom c2+\mu-\floor{c/2}}{2}}.
\end{equation}
Combining \eqref{eq:general_first_bound_9} and \eqref{eq:general_second_bound_9} gives $x\leq\Phi(\mu)$, and Lemma \ref{lem:Phi_closed} gives $x\leq\Gamma(\mu)$.

It remains to prove sharpness.  The cases $\mu=0$ and $\mu=1$ are realized by an edgeless graph and by $K_3$, respectively.  Assume $\mu\geq 2$, and set
\[
        c=\ceil{\sqrt{6\mu}}-1,
        \qquad
        X=\Gamma(\mu)=2\mu-c,
        \qquad
        t=\floor{c/2},
        \qquad
        s=\mu-t.
\]
The proof of Lemma \ref{lem:Phi_closed} gives
\[
        X\leq \floor{\frac{\binom c2+s}{2}},
\]
and hence $2X-s\leq \binom c2$.  Also $s\leq X\leq 2s$.  Put
\[
        q=X-s,
        \qquad
        p=2s-X.
\]
Then $p,q\geq 0$.  Let $H$ be the disjoint union of $q$ triangles and $p$ copies of $K_2$.  Then
\[
        \tau(H)=X,
        \qquad
        \mu(H)=s,
        \qquad
        m(H)=2X-s.
\]
Let $B$ induce a clique $K_c$.  Since $m(H)\leq \binom c2$, there are enough edges in $B$ to make the vertex set of $H$ annihilating after the joining operation below.

If $p>0$, choose one of the $K_2$ components of $H$ and join every vertex of $B$ to both of its vertices.  If $p=0$, then $q>0$; choose one triangle component of $H$ and join every vertex of $B$ to all three of its vertices.  Let $G$ be the resulting graph.  The chosen component intersects every maximum independent set of $H$, so adding vertices from $B$ cannot increase the independence number.  Hence
\[
        \alpha(G)=\alpha(H)=|V(H)|-X.
\]
The matching number is
\[
        \mu(G)=s+\floor{c/2}=\mu.
\]
Indeed, if $p>0$, the joined block is obtained from $K_c\cup K_2$ by adding all edges between $K_c$ and the chosen $K_2$; it is the clique $K_{c+2}$ and has matching number
\[
        \floor{(c+2)/2}=\floor{c/2}+1,
\]
the same as $K_c\cup K_2$.  If $p=0$, then $X=2s$.  Since $X=2\mu-c$ and $s=\mu-\floor{c/2}$, we get $c=2\floor{c/2}$, so $c$ is even.  The joined block is then $K_{c+3}$, whose matching number is
\[
        \floor{(c+3)/2}=c/2+1=\floor{c/2}+1,
\]
the same as $K_c$ together with the chosen triangle.  All other components of $H$ are disjoint from the joined block, and hence, the total matching number is $s+\floor{c/2}=\mu$.

Finally, if $R$ is the number of cross-edges between $H$ and $B$, then the degree sum over $V(H)$ is $2m(H)+R$, while
\[
        m(G)=m(H)+\binom c2+R.
\]
Since $m(H)\leq\binom c2$, the set $V(H)$ is annihilating in $G$.  Therefore,
\[
        a(G)-\alpha(G)
        \geq |V(H)|-(|V(H)|-X)=X=\Gamma(\mu).
\]
The upper bound already proved forces equality.
\end{proof}

\begin{proposition}[Location of extremal decompositions]\label{prop:general_location}
Let $G$ satisfy equality in Theorem~\ref{thm:general_exact}, and put $\mu=\mu(G)\geq3$.  Let $\langle A,B\rangle$ be an annihilating decomposition, set $c=|B|$, and put
\[
        r=\ceil{\sqrt{6\mu}}.
\]
Then
\[
        c\in\{r-2,r-1\}.
\]
Consequently
\[
        n(G)-\alpha(G)=a(G)-\alpha(G)+|B|\in\{2\mu-1,2\mu\}.
\]
Equivalently, every extremal graph for the general matching bound has matching-cover defect
\[
        2\mu(G)-\tau(G)\in\{0,1\}.
\]
If $c=r-1$, then $\tau(G)=2\mu(G)$, so the endpoints of every maximum matching form a minimum vertex cover.  If $c=r-2$, then $\tau(G)=2\mu(G)-1$ and the second inequality in the proof of Theorem~\ref{thm:general_exact} is tight for this decomposition.
\end{proposition}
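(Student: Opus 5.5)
We work with an extremal graph, so that $x=a(G)-\alpha(G)=\Gamma(\mu)=2\mu+1-r$. The two inequalities in the proof of Theorem~\ref{thm:general_exact} then pin down $c$ from both sides.

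\emph{Upper bound on $c$.} Inequality \eqref{eq:general_first_bound_9} gives $x\le 2\mu-c$. Hence $2\mu+1-r\le 2\mu-c$, that is, $c\le r-1$.

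\emph{Lower bound on $c$.} Inequality \eqref{eq:general_second_bound_9} gives $x\le B(c)$, with $B$ as in the proof of Lemma~\ref{lem:Phi_closed}. That proof shows $c_\mu=r-1$ is the least $c$ with $B(c)\ge A(c)$, and that $B(c)\le A(c_\mu)=\Gamma(\mu)$ for $c<c_\mu$. We need the strict version: $B(c)<\Gamma(\mu)$ whenever $c\le r-3$. Since $B$ is nondecreasing, it suffices to check $B(r-3)\le \Gamma(\mu)-1$, which is equivalent to
\[
\tbinom{r-3}{2}+\mu-\floor{(r-3)/2}\le 2(2\mu-r+1)-1 .
\]
Using the identity $\binom c2+2c-\floor{c/2}=\floor{(c+1)^2/2}$ at $c=r-3$, the left side minus $\mu$ equals $\floor{(r-2)^2/2}-2(r-3)$. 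The condition therefore reduces to
\[
\floor{(r-2)^2/2}\le 3\mu-1 .
\]
This holds because $(r-1)^2<6\mu$ by the definition of $r$, so $(r-2)^2/2<3\mu-(2r-3)/2$. For $\mu\ge3$ we have $r\ge5$, so the right side is comfortably below $3\mu-1$. The computation with floors, including checking the parity cases of $r$, is the main place where care is needed. This is also where the hypothesis $\mu\ge3$ is used, since small $\mu$ can have additional coincidences. With $B(c)<\Gamma(\mu)=x$ for $c\le r-3$, we conclude $c\ge r-2$.

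\emph{Consequences.} Write $n(G)-\alpha(G)=\tau(G)=x+c$. Substituting $c=r-1$ gives $\tau=2\mu$, and $c=r-2$ gives $\tau=2\mu-1$. This yields the defect $2\mu(G)-\tau(G)\in\{0,1\}$.

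If $\tau=2\mu$, the $2\mu$ endpoints of any maximum matching form a vertex cover of size $\tau$, hence a minimum one.

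If $c=r-2$, then $x\le B(r-2)$ must be tight at the value $x=2\mu+1-r$. We check that $B(r-2)\le 2\mu+1-r$ via the same identity, since $B(r-2)<A(r-2)=2\mu+2-r$ because $r-2<c_\mu$. Hence $B(r-2)=x$. Consequently the chain $2x\le \mu+\binom c2-\floor{c/2}$ is tight up to the floor, which is the asserted tightness of the second inequality.
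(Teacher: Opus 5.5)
Your structure is the paper's. You get $c\le r-1$ from the first bound, $c\ge r-2$ from monotonicity of $B$ together with $B(r-3)<\Gamma(\mu)$, and the consequences from $\tau(G)=x+c$. Your handling of the case $c=r-2$ is correct and a bit more explicit than the paper's: minimality of $c_\mu=r-1$ gives $B(r-2)<A(r-2)$, hence $B(r-2)=x$. The problem is the one computation the lower bound rests on. Put $N=\binom{r-3}{2}+\mu-\floor{(r-3)/2}$. You correctly find $N-\mu=\floor{(r-2)^2/2}-2r+6$. But then $N\le 2\Gamma(\mu)-1=4\mu-2r+1$ is equivalent to $\floor{(r-2)^2/2}\le 3\mu-5$, not to $\floor{(r-2)^2/2}\le 3\mu-1$. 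The constant $+6$ was effectively replaced by $+2$.

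With the correct target, your justification fails exactly at the boundary. Your estimate $(r-2)^2/2<3\mu-(2r-3)/2$ gives, for $r=5$, only $\floor{(r-2)^2/2}\le 3\mu-4$, which is not enough. There is also no ``comfortable'' slack. For $\mu=3$ and $r=5$ one has $N=1+3-1=3=2\Gamma(3)-1$ and $\floor{9/2}=4=3\mu-5$, so the inequality is tight.

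The repair is short:
\begin{itemize}
\item For $r\ge6$, your estimate gives $\floor{(r-2)^2/2}<3\mu-9/2$, hence $\floor{(r-2)^2/2}\le 3\mu-5$.
\item For $r=5$, which forces $\mu\in\{3,4\}$, use the floor directly: $\floor{9/2}=4\le 3\mu-5$.
\end{itemize}
This is what the paper's chain $\binom{r-3}{2}+2r-2-\floor{(r-3)/2}\le (r-1)^2/2<3\mu$ encodes. That chain is equivalent to $\floor{(r-2)^2/2}+4\le (r-1)^2/2<3\mu$, and at $r=5$ it reads $8\le 8$ thanks to the floor. With this correction your proposal is a complete proof along the paper's lines.
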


\begin{proof}
Let $x=a(G)-\alpha(G)$.  Since equality holds,
\[
        x=\Gamma(\mu)=2\mu+1-r.
\]
The first bound in the proof of Theorem~\ref{thm:general_exact} gives
\[
        x\leq 2\mu-c,
\]
so $c\leq r-1$.

We claim that $c\leq r-3$ is impossible.  Since $\mu\geq3$, we have $r\geq5$ and $(r-1)^2<6\mu$.  Let
\[
        B_\mu(c)=\floor{\frac{\binom c2+\mu-\floor{c/2}}{2}}.
\]
The function $B_\mu(c)$ is nondecreasing in $c$.  Hence, if $c\leq r-3$, then
\[
        B_\mu(c)\leq B_\mu(r-3).
\]
It is enough to show $B_\mu(r-3)<\Gamma(\mu)$.  Put
\[
        N=\binom{r-3}{2}+\mu-\floor{\frac{r-3}{2}}.
\]
Since
\[
        \binom{r-3}{2}+2r-2-\floor{\frac{r-3}{2}}
        \leq \frac{(r-1)^2}{2}
        <3\mu,
\]
we get
\[
        N<4\mu+2-2r=2\Gamma(\mu).
\]
Thus, $B_\mu(r-3)=\lfloor N/2\rfloor<\Gamma(\mu)$, and hence, $B_\mu(c)<\Gamma(\mu)$ for all $c\leq r-3$.  This contradicts the second bound in the proof of Theorem~\ref{thm:general_exact}, which is necessary for $x=\Gamma(\mu)$.  Therefore, $c\geq r-2$.

Combining $c\leq r-1$ and $c\geq r-2$ gives $c\in\{r-2,r-1\}$.  Since
\[
        \tau(G)=n(G)-\alpha(G)=a(G)-\alpha(G)+|B|=x+c,
\]
we obtain $\tau(G)=2\mu$ when $c=r-1$ and $\tau(G)=2\mu-1$ when $c=r-2$.  In the latter case the first bound gives only $x\leq\Gamma(\mu)+1$; hence, equality in the overall theorem forces the second bound to be tight.
\end{proof}

\begin{proposition}[Equality certificates for the general bound]\label{prop:general_certificate}
Let $G$ satisfy equality in Theorem \ref{thm:general_exact}, let $\mu=\mu(G)$, and let $\langle A,B\rangle$ be an annihilating decomposition.  Put $c=|B|$ and $x=a(G)-\alpha(G)$.  Then
\[
        x=\Gamma(\mu)
\]
and
\[
        \min\left\{2\mu-c,
        \floor{\frac{\binom c2+\mu-\floor{c/2}}{2}}\right\}=\Gamma(\mu).
\]
If the second entry in the minimum is tight for this decomposition, then all of the following equalities hold:
\[
        \alpha(G)=\alpha(G[A]),
        \qquad
        2\tau(G[A])=e(A)+\mu(G[A]),
\]
\[
        e(A)=e(B),
        \qquad
        \mu(G[A])+\mu(G[B])=\mu(G),
\]
and
\[
        e(B)-\mu(G[B])=\binom c2-\floor{c/2}.
\]
Together with Proposition~\ref{prop:general_location}, this reduces an isomorphism-level equality classification to the defect-zero and defect-one decompositions for which the proof inequalities are simultaneously sharp.
\end{proposition}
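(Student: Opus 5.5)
The plan is to re-run the proof of Theorem~\ref{thm:general_exact} on the given decomposition $\langle A,B\rangle$ and track which inequalities must be equalities. That proof gives two bounds for $x=a(G)-\alpha(G)$. The first is $x\leq 2\mu-c$, from \eqref{eq:general_first_bound_9}. The second is $x\leq B_\mu(c):=\floor{(\binom c2+\mu-\floor{c/2})/2}$, from \eqref{eq:general_second_bound_9}. Hence
\[
        x\leq \min\{2\mu-c,\,B_\mu(c)\}\leq \Phi(\mu)=\Gamma(\mu),
\]
where the last step is Lemma~\ref{lem:Phi_closed}. By hypothesis $x=\Gamma(\mu)$, so every inequality in this chain is an equality. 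This gives $x=\Gamma(\mu)$ and shows that the minimum equals $\Gamma(\mu)$.

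For the second part, assume $B_\mu(c)$ is tight, meaning $x=B_\mu(c)$. The proof of Theorem~\ref{thm:general_exact} supplies the chain
\[
\begin{aligned}
        2x&\leq 2\tau(H)\leq e(A)+\mu(H)\leq e(B)+\mu(H)\\
        &\leq e(B)+\mu-\mu(J)\leq \mu+\binom c2-\floor{c/2},
\end{aligned}
\]
with $H=G[A]$ and $J=G[B]$. Call the right-hand side $N$. Tightness gives $x=\floor{N/2}$, so $2x\in\{N-1,N\}$.

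This parity issue is the main obstacle. If $2x=N$, every link in the chain is an equality, and reading them off one by one gives the five claimed identities:
\begin{enumerate}
\item $\tau(H)=x$, i.e.\ $|A|-\alpha(G[A])=|A|-\alpha(G)$, so $\alpha(G)=\alpha(G[A])$;
\item the Lemma~\ref{lem:tau_edges_matching} equality $2\tau(G[A])=e(A)+\mu(G[A])$;
\item the Lemma~\ref{lem:EALEB} equality $e(A)=e(B)$;
\item the matching identity $\mu(G[A])+\mu(G[B])=\mu(G)$;
\item the Lemma~\ref{lem:e_minus_matching_complete} equality $e(B)-\mu(G[B])=\binom c2-\floor{c/2}$.
\end{enumerate}

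If instead $N$ is odd and $2x=N-1$, there is one unit of total slack spread over the links. I would first try to show that this slack cannot occur, or must sit in a harmless place, using integrality of each link. Several links have natural parity constraints. For example, $e(B)-\mu(J)$ versus its maximum relates to added edges, and $2\tau(H)$ versus $2x$ is even. So the slack of one cannot lie in the first link. A unit of slack in the other links would have to be excluded by a separate argument, possibly by comparing with the extremal value $\Gamma(\mu)$ and the location result Proposition~\ref{prop:general_location}.

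If parity cannot exclude the odd case, I would interpret ``tight'' as $2x$ equal to the unfloored quantity $N$ and state that reading explicitly. Under that reading the equalities follow directly from the chain, as in the even case.

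The closing sentence of the statement needs no separate proof. Proposition~\ref{prop:general_location} already restricts $c$ to $\{r-2,r-1\}$, that is, to defect $2\mu-\tau\in\{0,1\}$. So it suffices to note that in each case the listed equalities hold whenever the second entry is tight.
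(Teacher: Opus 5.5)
Your first assertion and your case $2x=N$, with $N=\binom c2+\mu-\floor{c/2}$, match the paper's argument exactly. You have also found the point the paper's one-line proof skips: tightness of the floored entry only gives $2x\in\{N-1,N\}$. But your preferred plan for $N$ odd, showing that the unit of slack cannot occur or is harmless, cannot succeed. Under the floored reading the proposition is false.

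Take $G=K_6$. Here $\mu=3$ and $\Gamma(3)=7-\ceil{\sqrt{18}}=2$, while $a(K_6)=3$ and $\alpha(K_6)=1$, so $G$ is extremal. Every annihilating decomposition has $G[A]\cong G[B]\cong K_3$ and $c=3$. The second entry is $\floor{5/2}=2=x$, and the first entry is $3$. Yet $\mu(G[A])+\mu(G[B])=2\neq 3=\mu(G)$.

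The paper's own sharpness graph at $\mu=13$, namely $K_{11}\cup 8K_3$, fails in a different link. It has $c=8$, $N=37$ and second entry $18=x$, but $e(A)=27<28=e(B)$.

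Your parity remark is the correct salvage. When $2x=N-1$, the link $2x\le 2\tau(G[A])$ has even slack, so $\alpha(G)=\alpha(G[A])$ still holds. Exactly one of the remaining four identities then fails, and it fails by exactly one.

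So your fallback is required, not optional. The statement holds precisely when the second inequality is tight before flooring, that is, $2x=\binom c2+\mu-\floor{c/2}$. This is what the paper's proof tacitly assumes when it says equality holds throughout the chain. With that reading stated explicitly, your proof is complete and coincides with the paper's.

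This also affects your closing paragraph. Proposition~\ref{prop:general_location} only delivers floored tightness in the defect-one case $c=r-2$. $K_6$ is exactly such a case, with $r=5$ and $\tau=5=2\mu-1$. So the listed equalities cannot be transferred to defect-one decompositions without the extra parity condition.
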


\begin{proof}
The first assertion follows immediately from the proof of Theorem \ref{thm:general_exact}.  If the second entry is tight, equality must hold in each inequality in the chain
\[
        2x\leq e(A)+\mu(G[A])\leq e(B)+\mu(G)-\mu(G[B])
        \leq \mu(G)+\binom c2-\floor{c/2}.
\]
Unwinding the equalities gives the listed conditions.
\end{proof}

\begin{theorem}[Matching-cover defect criterion for equality in the exact general bound]\label{thm:general_equality_classification}
Let $G$ be a finite simple graph and put $\mu=\mu(G)$.  If $\mu=0$, then equality in Theorem~\ref{thm:general_exact} holds exactly for edgeless graphs.  If $\mu=1$, then every graph with matching number one attains equality.  If $\mu=2$, then equality holds exactly when $a(G)=\alpha(G)+1$.

Assume now that $\mu\geq3$, and put
\[
        r=\ceil{\sqrt{6\mu}}.
\]
Then $G$ attains equality in Theorem~\ref{thm:general_exact} if and only if one of the following two mutually exclusive alternatives holds:
\[
        a(G)=n(G)-r+1
        \quad\text{and}\quad
        \tau(G)=2\mu,
\]
or
\[
        a(G)=n(G)-r+2
        \quad\text{and}\quad
        \tau(G)=2\mu-1.
\]
Equivalently, if $\langle A,B\rangle$ is an annihilating decomposition, then equality holds if and only if either
\[
        |B|=r-1
        \quad\text{and}\quad
        \tau(G)=2\mu,
\]
or
\[
        |B|=r-2
        \quad\text{and}\quad
        \tau(G)=2\mu-1.
\]
In the second alternative, the second proof inequality in Theorem~\ref{thm:general_exact} is tight, and the equalities listed in Proposition~\ref{prop:general_certificate} hold for the corresponding annihilating decomposition.
\end{theorem}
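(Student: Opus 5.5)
The proof splits into the small cases $\mu\le2$ and the main case $\mu\ge3$, which follows almost entirely from Proposition~\ref{prop:general_location} and the identity $\tau(G)=x+|B|$.

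\emph{Small cases.} We compute $\Gamma$ directly.
\begin{itemize}
\item For $\mu=0$, $\Gamma(0)=0$. The graph is edgeless, and then $a=\alpha=n$, so equality always holds. Edgeless graphs are exactly the graphs with $\mu=0$.
\item For $\mu=1$, $\Gamma(1)=3-\lceil\sqrt6\rceil=0$. Since $\alpha\le a$ and the upper bound gives $a-\alpha\le0$, equality is automatic.
\item For $\mu=2$, $\Gamma(2)=5-\lceil\sqrt{12}\rceil=1$. So equality means exactly $a(G)-\alpha(G)=1$, that is, $a(G)=\alpha(G)+1$.
\end{itemize}

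\emph{Forward direction for $\mu\ge3$.} Fix an annihilating decomposition $\langle A,B\rangle$ and write $x=a(G)-\alpha(G)$ and $c=|B|=n(G)-a(G)$. The key identity is $\tau(G)=n(G)-\alpha(G)=x+c$. If equality holds, Proposition~\ref{prop:general_location} gives $c\in\{r-2,r-1\}$ with $x=2\mu+1-r$.
\begin{itemize}
\item If $c=r-1$, then $\tau=x+c=2\mu$, and $a=n-c=n-r+1$.
\item If $c=r-2$, then $\tau=2\mu-1$ and $a=n-r+2$.
\end{itemize}
The tightness statement in the second case, and the listed equalities from Proposition~\ref{prop:general_certificate}, are already contained in those two propositions. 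The alternatives are mutually exclusive because their values of $\tau$ differ. The reformulation in terms of $|B|$ is just $|B|=n(G)-a(G)$.

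\emph{Converse.} This is pure arithmetic.
\begin{itemize}
\item If $a=n-r+1$ and $\tau=2\mu$, then $x=a-\alpha=(n-\alpha)-(n-a)=\tau-(r-1)=2\mu-r+1=\Gamma(\mu)$.
\item If $a=n-r+2$ and $\tau=2\mu-1$, then $x=2\mu-1-(r-2)=\Gamma(\mu)$ again.
\end{itemize}
So either alternative forces equality, and no further appeal to the upper bound is needed.

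The only point needing care is to check that Proposition~\ref{prop:general_location} covers all of $\mu\ge3$, which it does since it assumes exactly $\mu\ge3$. One should also confirm that the value $a(G)$, and hence $|B|$, does not depend on which maximum annihilating set is chosen. It does not, because $|B|=n(G)-a(G)$ is determined by $a(G)$ alone. I expect no genuine obstacle: the theorem is essentially a repackaging of the location proposition via $\tau=x+c$.
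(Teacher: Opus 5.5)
Your small cases, the forward direction via Proposition~\ref{prop:general_location} and $\tau(G)=x+|B|$, and the arithmetic converse follow the paper's proof almost word for word, and that part is correct. The problem is the final clause. You discharge it, as the paper does, by citing Proposition~\ref{prop:general_certificate}. That proposition infers equality at every step of the unfloored chain
\[
        2x\le e(A)+\mu(G[A])\le e(B)+\mu-\mu(G[B])\le \mu+\binom c2-\floor{c/2}=:U
\]
from tightness of the floored bound $x\le\floor{U/2}$. Floor-tightness only yields $U-2x\in\{0,1\}$. Take $c=r-2$ and $x=\Gamma(\mu)$. The identity $\binom c2+2c-\floor{c/2}=\floor{(c+1)^2/2}$ then gives $U-2x=\floor{(r-1)^2/2}+2-3\mu$. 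This equals $1$ whenever $(r-1)^2=6\mu-2$ (e.g.\ $\mu=3,11,17$), and in that case one step of the chain may be strict.

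\textbf{Counterexample.} Take the join $G=2K_2\vee K_3$.
\begin{itemize}
\item $n=7$, $m=17$, and the degrees are $4,4,4,4,6,6,6$. So $a(G)=4$, and $A=V(2K_2)$ is the unique maximum annihilating set.
\item $\alpha(G)=2$ and $\mu(G)=3$.
\item Hence $a(G)-\alpha(G)=2=\Gamma(3)$, $|B|=3=r-2$ and $\tau(G)=5=2\mu-1$, so $G$ is in the second alternative.
\item The second bound is tight, since $U=5$ and $\floor{5/2}=2=x$.
\item Yet $e(A)=2<3=e(B)$. With $P_3$ in place of $K_3$, one instead gets $e(B)-\mu(G[B])=1<2=\binom c2-\floor{c/2}$.
\end{itemize}
So the last clause is false as stated, and no argument can prove it.

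\textbf{What can be salvaged.} In the second alternative your reasoning does give floor-tightness of the second bound. It also gives $\alpha(G)=\alpha(G[A])$: a strict inequality $x<\tau(G[A])$ would cost $2>U-2x$ in the chain. The full list in Proposition~\ref{prop:general_certificate} is forced by this argument only when $U$ is even, i.e.\ when $(r-1)^2=6\mu-3$.
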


\begin{proof}
The cases $\mu=0$ and $\mu=1$ follow directly from the formula $\Gamma(0)=0$ and $\Gamma(1)=0$, together with $\alpha(G)\leq a(G)$.  For $\mu=2$, the bound gives $a(G)-\alpha(G)\leq1$, so equality is exactly the condition $a(G)=\alpha(G)+1$.

Now assume $\mu\geq3$.  Let $\langle A,B\rangle$ be an annihilating decomposition and put $c=|B|=n(G)-a(G)$.  If equality holds, Proposition~\ref{prop:general_location} gives
\[
        c\in\{r-2,r-1\}
\]
and
\[
        \tau(G)=a(G)-\alpha(G)+c\in\{2\mu-1,2\mu\}.
\]
More precisely, $c=r-1$ gives $\tau(G)=2\mu$, while $c=r-2$ gives $\tau(G)=2\mu-1$.  Since $c=n(G)-a(G)$, these are exactly the two alternatives displayed in the theorem.

Conversely, suppose first that $a(G)=n(G)-r+1$ and $\tau(G)=2\mu$.  Then
\[
        a(G)-\alpha(G)=a(G)-n(G)+\tau(G)=2\mu-r+1=\Gamma(\mu).
\]
Similarly, if $a(G)=n(G)-r+2$ and $\tau(G)=2\mu-1$, then
\[
        a(G)-\alpha(G)=a(G)-n(G)+\tau(G)=2\mu-r+1=\Gamma(\mu).
\]
Thus equality holds in Theorem~\ref{thm:general_exact}.  The final statement is precisely the last assertion of Proposition~\ref{prop:general_location} together with Proposition~\ref{prop:general_certificate}.
\end{proof}

\begin{remark}
The extremal construction in Theorem \ref{thm:general_exact} explains why the arbitrary-graph problem differs sharply from the K\"onig--Egerv\'ary problem.  Low-degree odd components, especially triangles, can contribute two units to the vertex-cover side of the gap while using only one matching edge; a dense clique supplies the edge budget needed for the low-degree part to be annihilating.
\end{remark}

\section{A TxGraffiti inequality as a machine-conjecture case study}\label{section:txgraffiti}

The Havel--Hakimi residue is one of the classical invariants arising from the Graffiti tradition.  We use the following standard comparison theorem.

\begin{theorem}[Caro--Wei; Favaron--Mah\'eo--Sacl\'e; Griggs--Kleitman]\label{thm:res_le_alpha}
For every graph $G$,
\[
        W(G):=\sum_{v\in V(G)}\frac{1}{d(v)+1}\leq \res(G)\leq \alpha(G).
\]
\end{theorem}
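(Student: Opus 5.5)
The statement combines two inequalities, and I will prove them in two separate steps.

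The right-hand inequality $\res(G)\leq\alpha(G)$ is the theorem of Favaron, Mah\'eo and Sacl\'e. I will follow the short Griggs--Kleitman route. Run the Havel--Hakimi process: at each step delete a vertex $v$ of maximum degree $\Delta$ and subtract one from the $\Delta$ largest remaining entries. The process stops at an all-zero sequence of length $\res(G)$. We cannot realize every intermediate step as a vertex deletion in $G$ itself, so the argument works with graphic realizations and needs the following monotonicity statement.

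\emph{Claim.} If $D'$ is obtained from a graphic sequence $D$ by one Havel--Hakimi step, then
\[
\max\{\alpha(H):H \text{ realizes } D\}\geq \max\{\alpha(H'):H' \text{ realizes } D'\}.
\]
To prove it, take a realization $H'$ of $D'$ with a maximum independent set $I'$. Re-add the deleted vertex $v$, joined to the vertices whose degrees were decremented. Then use edge switches ($2$-switches) to make $v$ adjacent to prescribed vertices while keeping some maximum independent set unchanged. Griggs and Kleitman handle this with an exchange argument over degree-ordered vertices. Iterating the claim gives that $\res(G)$ is at most the independence number of every realization of $D(G)$, provided one also shows every realization has $\alpha\geq\res$. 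That last point is exactly what Griggs--Kleitman and Favaron et al.\ establish.

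This exchange step is where I expect the real difficulty. I would first try the Favaron--Mah\'eo--Sacl\'e approach: show directly in $G$ that a greedy ``delete a maximum-degree vertex'' process, suitably modified, leaves at least $\res(G)$ vertices in an independent set. Given how delicate this is, I would probably cite \cite{FavaronMaheoSacle1991,GriggsKleitman1994} for this half, as the paper's framing suggests.

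The left-hand inequality $W(G)\leq\res(G)$ is comparatively routine, and I would prove it by induction along the Havel--Hakimi process. Define $W(D)=\sum_i 1/(d_i+1)$ for a degree sequence $D$; for the terminal zero sequence it equals its length, which is $\res$. So it suffices to show that $W$ does not decrease under one Havel--Hakimi step. In a step we remove the term $1/(\Delta+1)$ and replace $1/(d_i+1)$ by $1/d_i$ for the $\Delta$ largest remaining $d_i$. Each of these $d_i\leq\Delta$, so the gain at each is
\[
\frac{1}{d_i}-\frac{1}{d_i+1}=\frac{1}{d_i(d_i+1)}\geq\frac{1}{\Delta(\Delta+1)}.
\]
Summing over the $\Delta$ decremented entries gives a total gain of at least $\Delta\cdot\frac{1}{\Delta(\Delta+1)}=\frac{1}{\Delta+1}$, which is exactly the amount lost by deleting $v$. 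If $\Delta=0$ the process has already terminated. Hence $W$ is nondecreasing along the process, and $W(G)\leq\res(G)$.
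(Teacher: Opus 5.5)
The paper does not prove this theorem. It cites Caro and Wei for $W\le\alpha$, Favaron--Mah\'eo--Sacl\'e for both $W\le\res$ and $\res\le\alpha$, and Griggs--Kleitman for a short proof of the latter. It adds only a one-sentence informal remark about following the Havel--Hakimi reductions.

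Your argument for the left inequality is therefore a genuine addition, and it is correct. A Havel--Hakimi step loses $1/(\Delta+1)$ and gains $1/(d_i(d_i+1))\ge 1/(\Delta(\Delta+1))$ at each of the $\Delta$ decremented entries. The terminal all-zero sequence has $W$ equal to its length, which is $\res(G)$. You should state explicitly that the decremented entries satisfy $d_i\ge 1$, since otherwise $1/d_i$ is undefined. This holds because the successor sequence is again graphic, hence nonnegative. Compared with the citation, your route shows that $W$ is nondecreasing at every stage of the process. Combined with $\res\le\alpha$, it also yields Caro--Wei as a corollary rather than as a separate input.

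For the right inequality you end up citing, exactly as the paper does, which is fine. The sketch you give, however, would not work. Your Claim concerns the \emph{maximum} of $\alpha$ over realizations, and that direction is trivial: adding $v$ back to a realization $H'$ of $D'$ keeps $I'$ independent, with no 2-switches needed. Iterating it only shows that \emph{some} realization of the degree sequence has $\alpha\ge\res$. That realization is the one built backwards by Havel--Hakimi, in which the terminal zero vertices remain pairwise non-adjacent. The theorem is about \emph{every} realization, in particular $G$ itself. Your proviso ``every realization has $\alpha\ge\res$'' is the statement being proved, so the reasoning is circular.

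A workable skeleton inducts inside $G$. For a vertex $v$ of maximum degree, $\alpha(G)\ge\alpha(G-v)\ge\res(G-v)$. Since $\res(G)$ equals the residue of the Havel--Hakimi successor $D'$, it suffices to show $\res(G-v)\ge\res(D')$. The degree sequence of $G-v$ differs from $D'$ only in where the decrements fall: on the actual neighbours of $v$ rather than on the $\Delta$ largest remaining entries. So the real content is a purely numerical lemma: decrementing the $\Delta$ largest entries gives the smallest possible residue. Equivalently, shifting a decrement from a smaller entry to a larger one never increases the residue. That lemma is where the real difficulty lies.
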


\noindent
The lower bound $W(G)\leq\alpha(G)$ is the classical Caro--Wei theorem; see Caro \cite{Caro1979} and Wei \cite{Wei1981}.  The stronger comparison $W(G)\leq\res(G)$ and the residue bound $\res(G)\leq\alpha(G)$ are due to Favaron, Mah\'eo and Sacl\'e \cite{FavaronMaheoSacle1991}; Griggs and Kleitman later gave a short proof of the residue lower bound for independence \cite{GriggsKleitman1994}.  Informally, the Havel--Hakimi reductions can be followed inductively so that the zero entries remaining at termination certify an independent set of at least that size.  In this paper we use only the displayed standard consequences.

TxGraffiti operates with finite versioned snapshot tables of graph invariants and Boolean predicates.  It returns inequalities that are true on the snapshot, but these outputs are conjectures until proved outside the table.  In the terminology of \cite{Davila2026}, the following statement was produced after adding the derived target $\alpha(G)\Delta(G)$, the product of the independence number and maximum degree, and fitting a multivariate lower-bound template involving $a(G)$ and $\res(G)$.

\begin{conjecture}[TxGraffiti]\label{conj:txgraffiti}
If $G$ is a connected graph with $n(G)\geq 3$, then
\[
        \alpha(G)\geq \frac{a(G)+\res(G)}{\Delta(G)}.
\]
\end{conjecture}

The following table summarizes the proof status of the machine output.  TxGraffiti supplies the first line as a table-true conjecture on its snapshot; the remaining lines are the structural ingredients and sharpness checks supplied here.
\begin{center}
\small
\begin{tabular}{p{0.28\linewidth}|p{0.62\linewidth}}
\hline
TxGraffiti output & $\displaystyle \alpha(G)\geq \frac{a(G)+\res(G)}{\Delta(G)}$ \\ \hline
Hypotheses & $G$ connected and $n(G)\geq3$ \\ \hline
Main proof ingredients & $a(G)\leq(\Delta(G)-1)\alpha(G)$ and $\res(G)\leq\alpha(G)$ \\ \hline
Why $n\geq3$ is needed & $K_2$ violates the inequality \\ \hline
Why connectedness is needed & $C_3\cup K_2$ violates the inequality \\ \hline
Equality witnesses & $P_3$, $C_3$, $C_4$, and $K_4$ \\ \hline
\end{tabular}
\end{center}

The conjecture is not a formal consequence of the two basic inequalities $\alpha(G)\leq a(G)$ and $\res(G)\leq\alpha(G)$.  The missing ingredient is an upper bound on the annihilation number itself in terms of $\Delta(G)$ and $\alpha(G)$.  Gupta's recent proof supplies such a bound through the Caro--Wei sum $W(G)$, namely
\[
        a(G)\leq \frac{\Delta(G)+1}{2}W(G),
\]
and combines it with the classical bounds recalled in Theorem~\ref{thm:res_le_alpha} \cite{Gupta2026}.  The proof below follows a different route: it first proves the structural estimate $a(G)\leq(\Delta(G)-1)\alpha(G)$ by annihilating decompositions, Brooks' theorem, and the forest bound.

Gupta's estimate is also useful inside the present paper.  The proof of his Caro--Wei bound contains more information than the displayed inequality alone.  Keeping the nonnegative slack terms gives an exact refinement, which can be combined with our matching-number theorem to produce a combined bracket for the independence number.

For a graph with at least one edge, write the degree sequence as
\[
        d_1\leq d_2\leq\cdots\leq d_n,
\]
put $a=a(G)$ and $\Delta=\Delta(G)$, and define the Caro--Wei sum
\[
        W(G)=\sum_{i=1}^n\frac{1}{d_i+1}.
\]
Let
\[
        S_H(G)=\sum_{i=1}^{a}d_i
\]
be the degree sum of the annihilation head.  Define the Gupta slack
\[
\begin{aligned}
        \sigma_\Delta(G)=&\frac{\Delta(n-a)-S_H(G)}{2\Delta} \\
        &+\sum_{i=1}^{a}\frac{(\Delta-d_i)(\Delta-d_i-1)}{2\Delta(d_i+1)}
        +\sum_{i=a+1}^{n}\frac{\Delta-d_i}{2(d_i+1)} .
\end{aligned}
\]
Each summand is nonnegative.

\begin{theorem}[Refined Gupta slack identity]\label{thm:refined_gupta}
Every graph $G$ with at least one edge satisfies
\[
        a(G)+\sigma_\Delta(G)=\frac{\Delta(G)+1}{2}W(G).
\]
In particular,
\[
        a(G)\leq \frac{\Delta(G)+1}{2}W(G),
\]
with the loss from equality measured exactly by $\sigma_\Delta(G)$.
\end{theorem}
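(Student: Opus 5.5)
The identity is purely algebraic, so I plan to prove it by expanding $\frac{\Delta+1}{2}W(G)$ term by term and matching it against $a+\sigma_\Delta(G)$. The key choice is to split the Caro--Wei sum at the index $a=a(G)$. That gives a head part $i\le a$ and a tail part $i>a$, each paired with the corresponding slack sum in the definition of $\sigma_\Delta(G)$. Throughout, $\Delta\geq1$ because $G$ has an edge, so all denominators are positive.

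\emph{Head terms, $i\leq a$.} For each such $i$ I will compute
\[
\frac{\Delta+1}{2(d_i+1)}-\frac{(\Delta-d_i)(\Delta-d_i-1)}{2\Delta(d_i+1)} .
\]
Over the common denominator $2\Delta(d_i+1)$, the numerator is $\Delta(\Delta+1)-(\Delta-d_i)(\Delta-d_i-1)=2\Delta+2\Delta d_i-d_i^2-d_i$. This factors as $(d_i+1)(2\Delta-d_i)$, so each head difference equals $1-\frac{d_i}{2\Delta}$. Summing over $i\le a$ gives $a-\frac{S_H(G)}{2\Delta}$.

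\emph{Tail terms, $i>a$.} Here the difference $\frac{\Delta+1}{2(d_i+1)}-\frac{\Delta-d_i}{2(d_i+1)}$ equals $\frac12$ exactly, so the tail contributes $\frac{n-a}{2}$.

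Adding the two parts, $\frac{\Delta+1}{2}W(G)$ minus the two summation slack terms equals $a+\frac{n-a}{2}-\frac{S_H(G)}{2\Delta}$. This is precisely $a$ plus the first slack term $\frac{\Delta(n-a)-S_H(G)}{2\Delta}$, which proves the identity. The inequality $a(G)\le\frac{\Delta+1}{2}W(G)$ then follows once each summand of $\sigma_\Delta(G)$ is shown to be nonnegative.

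\emph{Nonnegativity.} The tail summands are clearly nonnegative since $d_i\le\Delta$. Each head summand contains the factor $(\Delta-d_i)(\Delta-d_i-1)$. This is a product of two consecutive integers, so it is nonnegative: it vanishes when $d_i\in\{\Delta,\Delta-1\}$ and is positive otherwise. For the first term I will use the annihilation condition $S_H(G)\le m(G)$. Then $\sum_{i>a}d_i=2m-S_H(G)\ge m\ge S_H(G)$. Also, trivially, $\sum_{i>a}d_i\le\Delta(n-a)$, and together these give $\Delta(n-a)\ge S_H(G)$.

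The only step needing care is the factorization in the head computation; everything else is bookkeeping.
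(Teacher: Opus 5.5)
Your proof is correct and is essentially the paper's argument: you split $W(G)$ at the annihilation index, your head and tail computations (differences $1-\frac{d_i}{2\Delta}$ and $\frac12$) are exactly the paper's two pointwise identities for $\frac{1}{k+1}$ multiplied through by $\frac{\Delta+1}{2}$, and you derive $S_H(G)\le\Delta(n-a)$ from the annihilation condition just as the paper does. Your explicit check that $(\Delta-d_i)(\Delta-d_i-1)\ge 0$, because it is a product of consecutive integers, proves the nonnegativity of the summands, which the paper only asserts just before the theorem.
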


\begin{proof}
Let $H=\{1,\ldots,a\}$ and $T=\{a+1,\ldots,n\}$ be the annihilation head and tail in the nondecreasing degree sequence.  Since $a$ is the annihilation number,
\[
        S_H(G)=\sum_{i\in H}d_i\leq m(G).
\]
Hence
\[
        \sum_{i\in T}d_i=2m(G)-S_H(G)\geq S_H(G),
\]
and, because each tail degree is at most $\Delta$,
\[
        S_H(G)\leq \Delta(n-a).
\]
For every integer $0\leq k\leq\Delta$, the identity
\[
        \frac{1}{k+1}
        =\frac{2}{\Delta+1}-\frac{k}{\Delta(\Delta+1)}
        +\frac{(\Delta-k)(\Delta-k-1)}{\Delta(\Delta+1)(k+1)}
\]
refines the pointwise inequality used in Gupta's proof.  For tail degrees we also have the exact identity
\[
        \frac{1}{k+1}=\frac{1}{\Delta+1}+\frac{\Delta-k}{(\Delta+1)(k+1)}.
\]
Summing the first identity over $H$ and the second over $T$ gives
\[
\begin{aligned}
        W(G)=&\frac{2a}{\Delta+1}-\frac{S_H(G)}{\Delta(\Delta+1)}
        +\sum_{i=1}^{a}\frac{(\Delta-d_i)(\Delta-d_i-1)}{\Delta(\Delta+1)(d_i+1)}\\
        &+\frac{n-a}{\Delta+1}
        +\sum_{i=a+1}^{n}\frac{\Delta-d_i}{(\Delta+1)(d_i+1)} .
\end{aligned}
\]
Multiplying by $(\Delta+1)/2$ and rearranging gives exactly
\[
        \frac{\Delta+1}{2}W(G)=a(G)+\sigma_\Delta(G).
\]
The inequality follows because $\sigma_\Delta(G)\geq0$.
\end{proof}

\begin{corollary}[Equality in Gupta's Caro--Wei annihilation bound]\label{cor:gupta_equality}
Let $G$ be a graph with at least one edge, degree sequence $d_1\leq\cdots\leq d_n$, maximum degree $\Delta$, annihilation number $a=a(G)$, and annihilation-head degree sum
\[
        S_H(G)=\sum_{i=1}^{a}d_i.
\]
Then equality holds in Gupta's inequality
\[
        a(G)=\frac{\Delta(G)+1}{2}W(G)
\]
if and only if all three of the following degree-sequence conditions hold:
\begin{enumerate}
\item[(i)] $S_H(G)=\Delta(n-a)$;
\item[(ii)] every head degree satisfies $d_i\in\{\Delta-1,\Delta\}$ for $1\leq i\leq a$;
\item[(iii)] every tail degree satisfies $d_i=\Delta$ for $a+1\leq i\leq n$.
\end{enumerate}
Equivalently, equality holds if and only if $\sigma_\Delta(G)=0$.
\end{corollary}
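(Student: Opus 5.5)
The proof should follow directly from Theorem~\ref{thm:refined_gupta}. That theorem gives the exact identity $a(G)+\sigma_\Delta(G)=\frac{\Delta+1}{2}W(G)$, so equality in Gupta's inequality holds if and only if $\sigma_\Delta(G)=0$. This settles the final ``equivalently'' clause at once. What remains is to show that $\sigma_\Delta(G)=0$ is equivalent to the conjunction of (i)--(iii).

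The first step is to check that each of the three groups of terms in $\sigma_\Delta(G)$ is nonnegative. For the first term, the proof of Theorem~\ref{thm:refined_gupta} already gives $S_H(G)\le\Delta(n-a)$. The head summands are $(\Delta-d_i)(\Delta-d_i-1)/(2\Delta(d_i+1))$. Since $d_i$ is an integer with $0\le d_i\le\Delta$, the product $(\Delta-d_i)(\Delta-d_i-1)$ is a product of two consecutive integers. It is therefore nonnegative, and it vanishes exactly when $\Delta-d_i\in\{0,1\}$. The tail summands $(\Delta-d_i)/(2(d_i+1))$ are nonnegative and vanish exactly when $d_i=\Delta$. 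Note that $\Delta\ge1$, because $G$ has at least one edge, so the denominators are positive.

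A sum of nonnegative terms is zero if and only if every term is zero. The three vanishing conditions are precisely (i), (ii) and (iii), which proves both directions of the corollary.

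The only point needing care is the head term when $d_i=\Delta-1$. Here the factor $\Delta-d_i-1$ is zero, so this degree is allowed; this is what yields the two-value set $\{\Delta-1,\Delta\}$ in (ii), in contrast with the single value $\Delta$ in (iii). Beyond that, the argument involves no real obstacle.
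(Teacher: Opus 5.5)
Your proposal is correct and follows the paper's own argument. Theorem~\ref{thm:refined_gupta} reduces equality to $\sigma_\Delta(G)=0$, and (i)--(iii) are exactly the vanishing conditions of the nonnegative first term, head summands and tail summands. Your explicit checks of nonnegativity (the consecutive-integer product for the head terms, $S_H(G)\le\Delta(n-a)$, and $\Delta\ge1$) fill in details that the paper only asserts.
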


\begin{proof}
By Theorem~\ref{thm:refined_gupta}, equality in Gupta's bound is equivalent to $\sigma_\Delta(G)=0$.  Since $\sigma_\Delta(G)$ is a sum of nonnegative terms, it vanishes precisely when each term vanishes.  The first slack term gives $S_H(G)=\Delta(n-a)$.  The head summand
\[
        \frac{(\Delta-d_i)(\Delta-d_i-1)}{2\Delta(d_i+1)}
\]
vanishes exactly when $\Delta-d_i\in\{0,1\}$, that is, $d_i\in\{\Delta,\Delta-1\}$.  The tail summand
\[
        \frac{\Delta-d_i}{2(d_i+1)}
\]
vanishes exactly when $d_i=\Delta$.  These are the three displayed conditions.
\end{proof}

\begin{corollary}[Combined computable independence bracket]\label{cor:combined_bracket}
Let $G$ be a graph with at least one edge.  Then
\[
        \max\left\{\res(G),\, W(G),\, a(G)-\Gamma(\mu(G))\right\}
        \leq \alpha(G)\leq a(G).
\]
Moreover,
\[
        a(G)=\frac{\Delta(G)+1}{2}W(G)-\sigma_\Delta(G)
        \leq \frac{\Delta(G)+1}{2}\res(G)-\sigma_\Delta(G).
\]
\end{corollary}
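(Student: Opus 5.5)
The plan is to assemble the bracket from results already in hand, treating the upper and lower halves separately and then the displayed identity.

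\emph{Upper bound.} The upper bound $\alpha(G)\leq a(G)$ is the classical observation from the introduction: every independent set is annihilating.

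\emph{Lower bound.} The three lower bounds come from two sources.
\begin{itemize}
\item Theorem~\ref{thm:res_le_alpha} gives $W(G)\leq\res(G)\leq\alpha(G)$, which covers both $\res(G)$ and $W(G)$ (the term $W(G)$ is redundant but harmless).
\item Theorem~\ref{thm:general_exact} gives $a(G)-\alpha(G)\leq\Gamma(\mu(G))$ for every finite simple graph. Rearranging yields $a(G)-\Gamma(\mu(G))\leq\alpha(G)$.
\end{itemize}
Taking the maximum of the three lower bounds proves the first display. The hypothesis that $G$ has an edge is not needed here, but $\mu(G)\geq1$ makes $\Gamma$ take its closed form $2\mu+1-\lceil\sqrt{6\mu}\rceil$.

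\emph{The second display.} Theorem~\ref{thm:refined_gupta} applies because $G$ has at least one edge, so $\Delta(G)\geq1$ and the slack $\sigma_\Delta(G)$ is well defined. It gives $a(G)+\sigma_\Delta(G)=\frac{\Delta(G)+1}{2}W(G)$, which is the stated equality after moving $\sigma_\Delta(G)$ to the other side. For the inequality, combine $W(G)\leq\res(G)$ from Theorem~\ref{thm:res_le_alpha} with the positive factor $(\Delta(G)+1)/2$. This gives $\frac{\Delta+1}{2}W(G)\leq\frac{\Delta+1}{2}\res(G)$, and subtracting $\sigma_\Delta(G)$ from both sides finishes the proof.

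There is no real obstacle. The only point to check is that Theorem~\ref{thm:res_le_alpha} genuinely supplies the intermediate comparison $W(G)\leq\res(G)$ (the Favaron--Mah\'eo--Sacl\'e result), and not merely $W(G)\leq\alpha(G)$. The bound on $a(G)$ in terms of $\res(G)$ relies on this stronger comparison.
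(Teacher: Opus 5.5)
Your proposal is correct and follows the paper's proof essentially verbatim. The lower bounds come from Theorem~\ref{thm:res_le_alpha} and a rearrangement of Theorem~\ref{thm:general_exact}, the upper bound comes from the observation that independent sets are annihilating, and the second display comes from the identity of Theorem~\ref{thm:refined_gupta} combined with $W(G)\leq\res(G)$. Your side remarks are accurate: the edge hypothesis is needed only so that $\Delta(G)\geq1$ and $\sigma_\Delta(G)$ is defined, and the $W(G)$ term in the maximum is redundant.
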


\begin{proof}
Theorem~\ref{thm:res_le_alpha} gives $W(G)\leq\res(G)\leq\alpha(G)$.  Theorem~\ref{thm:general_exact} gives $a(G)-\alpha(G)\leq\Gamma(\mu(G))$, and hence $a(G)-\Gamma(\mu(G))\leq\alpha(G)$.  Pepper's annihilation bound gives $\alpha(G)\leq a(G)$.  Theorem~\ref{thm:refined_gupta} gives the displayed identity for $a(G)$, and Theorem~\ref{thm:res_le_alpha} gives $W(G)\leq\res(G)$.
\end{proof}

\begin{corollary}[Gupta--residue annihilation-gap bound]\label{cor:gupta_residue_gap}
Let $G$ be a graph with at least one edge.  Then
\[
        a(G)-\alpha(G)
        \leq
        \min\left\{
        \Gamma(\mu(G)),\,
        a(G)-\res(G)
        \right\}.
\]
Moreover, writing $\Delta=\Delta(G)$, $a=a(G)$, $R=\res(G)$, $W=W(G)$, and $\sigma=\sigma_\Delta(G)$, one has the exact identity
\[
        a(G)-\res(G)
        =
        \frac{(\Delta-1)a(G)-2\sigma_\Delta(G)}{\Delta+1}
        -\bigl(\res(G)-W(G)\bigr).
\]
Denote this last expression by $\Theta_{\rm GR}(G)$.  Consequently,
\[
        a(G)-\alpha(G)
        \leq
        \min\{\Gamma(\mu(G)),\Theta_{\rm GR}(G)\}.
\]
In particular, any graph attaining equality in the exact matching bound of Theorem~\ref{thm:general_exact} must satisfy
\[
        (\Delta(G)+1)\Gamma(\mu(G))
        \leq
        (\Delta(G)-1)a(G)-2\sigma_\Delta(G)
        -(\Delta(G)+1)\bigl(\res(G)-W(G)\bigr).
\]
\end{corollary}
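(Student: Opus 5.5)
The plan is to combine two upper bounds on the gap and then rewrite the second one algebraically.

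\emph{First bound.} Theorem~\ref{thm:general_exact} gives $a(G)-\alpha(G)\leq\Gamma(\mu(G))$.

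\emph{Second bound.} Theorem~\ref{thm:res_le_alpha} gives $\res(G)\leq\alpha(G)$, so $a(G)-\alpha(G)\leq a(G)-\res(G)$. Taking the smaller of the two bounds yields the first display.

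\emph{The exact identity.} This is pure algebra from Theorem~\ref{thm:refined_gupta}. That theorem gives $W=\frac{2(a+\sigma)}{\Delta+1}$. We write
\[
a-R=(a-W)-(R-W).
\]
Substituting for $W$,
\[
a-W=\frac{(\Delta+1)a-2a-2\sigma}{\Delta+1}=\frac{(\Delta-1)a-2\sigma}{\Delta+1}.
\]
This is exactly the displayed expression, so $a-R=\Theta_{\rm GR}(G)$. The only hypothesis needed is that $G$ has at least one edge, so that $\Delta\geq1$ and Theorem~\ref{thm:refined_gupta} applies; then $\Delta+1>0$ and all divisions are legitimate. Since $\Theta_{\rm GR}(G)=a-R$, the "consequently" line is just a restatement of the first display.

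\emph{The final inequality.} Suppose $G$ attains equality in Theorem~\ref{thm:general_exact}. Then $\Gamma(\mu(G))=a-\alpha\leq\Theta_{\rm GR}(G)$. Multiply by $\Delta+1>0$ and expand:
\[
(\Delta+1)\Theta_{\rm GR}(G)=(\Delta-1)a-2\sigma-(\Delta+1)(R-W).
\]
This gives the stated inequality.

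The only step needing any care is verifying the algebraic rearrangement of the identity, in particular the sign of the $\sigma$ term and of the $R-W$ term. Otherwise the proof is a direct assembly of Theorems~\ref{thm:general_exact}, \ref{thm:res_le_alpha} and \ref{thm:refined_gupta}.
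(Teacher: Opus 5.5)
Your proposal is correct and matches the paper's proof. The first display comes from Theorem~\ref{thm:general_exact} together with $\res(G)\leq\alpha(G)$. The identity is obtained by substituting $W=2(a+\sigma)/(\Delta+1)$ from Theorem~\ref{thm:refined_gupta}; your split $a-R=(a-W)-(R-W)$ is just a tidier bookkeeping of that same substitution. The final inequality follows by multiplying $\Gamma(\mu(G))\leq\Theta_{\rm GR}(G)$ by $\Delta+1>0$.
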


\begin{proof}
The first inequality follows from Theorem~\ref{thm:general_exact} and Theorem~\ref{thm:res_le_alpha}: since $\alpha(G)\geq\res(G)$,
\[
        a(G)-\alpha(G)\leq a(G)-\res(G),
\]
while Theorem~\ref{thm:general_exact} gives $a(G)-\alpha(G)\leq\Gamma(\mu(G))$.
By Theorem~\ref{thm:refined_gupta},
\[
        a(G)=\frac{\Delta+1}{2}W-\sigma.
\]
Therefore
\[
\begin{aligned}
        a(G)-\res(G)
        &=\frac{\Delta+1}{2}W-\sigma-R\\
        &=\frac{(\Delta-1)a(G)-2\sigma}{\Delta+1}-(R-W),
\end{aligned}
\]
where the second equality follows by substituting $W=2(a+\sigma)/(\Delta+1)$.  This proves the displayed Gupta--residue gap bound.  If $a(G)-\alpha(G)=\Gamma(\mu(G))$, then the same bound forces the final displayed necessary condition.
\end{proof}

\begin{corollary}[Quantitative domination of the TxGraffiti lower bound]\label{cor:gupta_dominates_tx}
If $G$ has $\Delta(G)\geq3$, then
\[
        \frac{a(G)+\res(G)}{\Delta(G)}\leq \res(G)\leq\alpha(G).
\]
More precisely, write $\Delta=\Delta(G)$, $R=\res(G)$, $W=W(G)$, and $\sigma=\sigma_\Delta(G)$.  Then
\[
        R-\frac{a(G)+R}{\Delta}
        = \frac{(\Delta-3)R}{2\Delta}
        +\frac{(\Delta+1)(R-W)}{2\Delta}
        +\frac{\sigma}{\Delta} .
\]
Consequently, equality in the domination of the TxGraffiti lower bound by $\res(G)$ occurs if and only if $\Delta(G)=3$, $W(G)=\res(G)$, and $\sigma_\Delta(G)=0$.
\end{corollary}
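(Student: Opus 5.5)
The plan is to derive the identity by pure algebra from Theorem~\ref{thm:refined_gupta}, and then read off both the inequality and its equality case from the signs of the three terms. Write $a=a(G)$. The refined slack identity gives
\[
        a=\frac{\Delta+1}{2}W-\sigma .
\]
Substituting this into the left-hand side yields
\[
        R-\frac{a+R}{\Delta}=\frac{(\Delta-1)R}{\Delta}-\frac{(\Delta+1)W}{2\Delta}+\frac{\sigma}{\Delta}.
\]
Next split $\frac{(\Delta-1)R}{\Delta}$ as $\frac{(\Delta-3)R}{2\Delta}+\frac{(\Delta+1)R}{2\Delta}$. Combining the second piece with $-\frac{(\Delta+1)W}{2\Delta}$ gives $\frac{(\Delta+1)(R-W)}{2\Delta}$, which is exactly the displayed decomposition. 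This step is a routine check of coefficients and is the core of the argument.

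For the inequality, assume $\Delta\geq3$. Then $(\Delta-3)R\geq0$ because $R\geq0$. Theorem~\ref{thm:res_le_alpha} gives $R-W\geq0$, and $\sigma\geq0$ since every summand of $\sigma_\Delta(G)$ is nonnegative. Hence all three terms are nonnegative, so $(a+R)/\Delta\leq R$. The remaining inequality $R\leq\alpha(G)$ is again Theorem~\ref{thm:res_le_alpha}. The hypothesis $\Delta\geq3\geq1$ also guarantees that $G$ has an edge, so Theorem~\ref{thm:refined_gupta} applies.

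For the equality characterization, equality holds exactly when all three nonnegative terms vanish. The second term vanishes iff $W=R$, and the third iff $\sigma=0$. The first term vanishes iff $\Delta=3$ or $R=0$. The one point needing care is excluding $R=0$. The residue of a graph is at least $1$, since the Havel--Hakimi process always terminates with at least one zero; alternatively $R\geq W>0$. So $R>0$, and the first term vanishes precisely when $\Delta=3$.

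Conversely, if $\Delta=3$, $W=R$ and $\sigma=0$, then all three terms are zero and equality holds. I expect no real obstacle here beyond keeping the algebraic bookkeeping straight and noting $R>0$.
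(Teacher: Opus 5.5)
Your proposal is correct and follows the paper's proof. You substitute $a=\frac{\Delta+1}{2}W-\sigma$ from Theorem~\ref{thm:refined_gupta}, split $(\Delta-1)R$ as $\frac{(\Delta-3)R}{2}+\frac{(\Delta+1)R}{2}$, and read both the inequality and the equality case off the nonnegativity of the three terms. Your explicit check that $R\geq 1$, so that the first term vanishes only when $\Delta=3$, fills in a step the paper's proof leaves implicit.
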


\begin{proof}
By Theorem~\ref{thm:refined_gupta},
\[
        a(G)=\frac{\Delta+1}{2}W-\sigma.
\]
Therefore
\[
\begin{aligned}
R-\frac{a(G)+R}{\Delta}
&=\frac{(\Delta-1)R-a(G)}{\Delta}\\
&=\frac{(\Delta-1)R-\frac{\Delta+1}{2}W+\sigma}{\Delta}\\
&=\frac{(\Delta-3)R}{2\Delta}
 +\frac{(\Delta+1)(R-W)}{2\Delta}
 +\frac{\sigma}{\Delta}.
\end{aligned}
\]
The right-hand side is nonnegative for $\Delta\geq3$, since $R\geq W$ and $\sigma\geq0$.  This proves the sharpened inequality and the equality condition.
\end{proof}

\begin{corollary}[Equality in the TxGraffiti inequality]\label{cor:tx_equality_gupta}
Let $G$ be connected with $n(G)\geq3$.  If $\Delta(G)=2$, then equality in Theorem~\ref{thm:txgraffiti} holds if and only if $\res(G)=\alpha(G)$.  If $\Delta(G)\geq3$, then equality in Theorem~\ref{thm:txgraffiti} holds if and only if
\[
        \Delta(G)=3,\qquad W(G)=\res(G)=\alpha(G),\qquad \sigma_\Delta(G)=0.
\]
\end{corollary}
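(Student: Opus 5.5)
The plan is to split according to $\Delta(G)$ and, in each case, reduce equality $\alpha(G)\Delta(G)=a(G)+\res(G)$ to equality in inequalities the paper has already established. A connected graph with $n(G)\ge3$ has $\Delta(G)\ge2$, since $\Delta=1$ would force $G=K_2$. So only the two cases in the statement occur.

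\emph{Case $\Delta(G)=2$.} Here I will use the structural estimate $a(G)\le(\Delta(G)-1)\alpha(G)$ from the proof of Theorem~\ref{thm:txgraffiti}. With $\Delta=2$ it gives $a(G)\le\alpha(G)$, and together with $\alpha(G)\le a(G)$ this yields $a(G)=\alpha(G)$. The equality condition $2\alpha(G)=a(G)+\res(G)$ then becomes exactly $\alpha(G)=\res(G)$, and the implication reverses trivially. (Alternatively, $G$ is a path or a cycle, and one can check $a=\alpha$ directly from the degree sequences. Invoking the structural estimate is cleaner, provided it is indeed stated for all connected $G$ with $n\ge3$; I expect this to hold, since it is the main ingredient of Theorem~\ref{thm:txgraffiti}.)

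\emph{Case $\Delta(G)\ge3$.} Here I will invoke Corollary~\ref{cor:gupta_dominates_tx}, which gives the chain
\[
\frac{a(G)+\res(G)}{\Delta(G)}\le\res(G)\le\alpha(G).
\]
Equality in the TxGraffiti inequality means the two ends coincide, so both links are equalities. The second link gives $\res(G)=\alpha(G)$. For the first link, the exact identity in that corollary writes the gap $R-(a+R)/\Delta$ as a sum of three nonnegative terms: $(\Delta-3)R/(2\Delta)$, $(\Delta+1)(R-W)/(2\Delta)$ and $\sigma/\Delta$. The gap vanishes precisely when each term vanishes, so we need $R-W=0$, $\sigma_\Delta(G)=0$, and $(\Delta-3)R=0$. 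Since $R=\res(G)\ge W(G)>0$, the last condition forces $\Delta=3$. For the converse, the three conditions make the identity give $(a+R)/\Delta=R$, and $R=\alpha$ then yields equality.

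There is no real obstacle beyond bookkeeping. The one point needing care is positivity of $\res(G)$ in the step that extracts $\Delta=3$ from $(\Delta-3)R=0$; I will handle it through $\res(G)\ge W(G)>0$ from Theorem~\ref{thm:res_le_alpha}. I also need to confirm that the $\Delta=2$ case really relies on the bound $a\le(\Delta-1)\alpha$ and not on Gupta's estimate. Gupta's estimate would give only $a\le\tfrac32W$, which is too weak to force $a=\alpha$.
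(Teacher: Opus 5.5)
Your proposal is correct and follows the paper's proof. The $\Delta(G)\ge 3$ case is exactly the paper's argument via the chain and slack identity of Corollary~\ref{cor:gupta_dominates_tx}, and you usefully make explicit that $\res(G)\ge W(G)>0$ is what turns $(\Delta-3)\res(G)=0$ into $\Delta=3$. The only small variation is in the $\Delta(G)=2$ case: you obtain $a(G)=\alpha(G)$ from Lemma~\ref{lem:a_delta_alpha} together with $\alpha(G)\le a(G)$, whereas the paper checks paths and cycles directly. Both routes are valid, and the lemma does apply, since it is stated for all connected graphs of order at least three.
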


\begin{proof}
If $\Delta(G)=2$, then $G$ is a path or a cycle.  In both cases $a(G)=\alpha(G)$, and hence
\[
        \alpha(G)=\frac{a(G)+\res(G)}{2}
\]
holds if and only if $\res(G)=\alpha(G)$.

Now assume $\Delta(G)\geq3$.  By Corollary~\ref{cor:gupta_dominates_tx},
\[
        \frac{a(G)+\res(G)}{\Delta(G)}\leq \res(G)\leq \alpha(G).
\]
Thus equality in Theorem~\ref{thm:txgraffiti} holds if and only if both inequalities in this chain are equalities.  The first equality is characterized by Corollary~\ref{cor:gupta_dominates_tx}: it is equivalent to $\Delta(G)=3$, $W(G)=\res(G)$, and $\sigma_\Delta(G)=0$.  The second equality is $\res(G)=\alpha(G)$.  This gives exactly the displayed conditions.
\end{proof}

\begin{lemma}\label{lem:a_delta_alpha}
If $G$ is a connected graph with $n(G)\geq 3$, then
\[
        a(G)\leq (\Delta(G)-1)\alpha(G).
\]
\end{lemma}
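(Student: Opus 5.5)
The plan is to split on $\Delta=\Delta(G)$, since the introduction says the proof uses Brooks' theorem, the forest bound, and annihilating decompositions.

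\emph{Case $\Delta=2$.} A connected graph with $n\geq3$ and $\Delta=2$ is a path $P_n$ or a cycle $C_n$. A direct degree-sequence computation gives $a(G)=\alpha(G)$ in both cases.
\begin{itemize}
\item For $P_n$: $m=n-1$, there are two leaves, and we add $\floor{(n-3)/2}$ vertices of degree $2$. This yields $a=\ceil{n/2}=\alpha$.
\item For $C_n$: $a=\floor{n/2}=\alpha$.
\end{itemize}
Hence $a(G)\leq(\Delta-1)\alpha(G)$ holds. The base odd-cycle case $C_3$ mentioned in the boundary table sits here: $a=1=\alpha$.

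\emph{Case $\Delta\geq3$.} The natural route is through chromatic number. A maximum annihilating set $A$ induces $G[A]$, and $\alpha(G)\geq\alpha(G[A])\geq|A|/\chi(G[A])$. It therefore suffices to show $\chi(G[A])\leq\Delta-1$, or some averaged version of it.

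By Brooks' theorem, $\chi(G)\leq\Delta$ unless $G=K_{\Delta+1}$. The complete graph is checked directly: $a(K_{\Delta+1})=\floor{(\Delta+1)/2}$ and $\alpha=1$, so $a\leq\Delta-1$ needs $\floor{(\Delta+1)/2}\leq\Delta-1$, which holds for $\Delta\geq2$.

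To gain the extra unit inside $A$, I would use that $A$ has small degree sum, namely $\sum_{v\in A}d(v)\leq m$. Here are two routes I would try.
\begin{itemize}
\item \emph{Degree argument.} If some component of $G[A]$ had all degrees $\geq\Delta-1$ in $G[A]$, I would try to derive a contradiction with $e(A)\leq e(B)$ (Lemma~\ref{lem:EALEB}) together with $|B|=n-a$. The obstacle is that such components are not obviously excluded.
\item \emph{Cruder counting (preferred).} Use $a\leq n/2+$something. Since the head degrees are the smallest, $a\cdot\bar d_A\leq m\leq n\Delta/2$. Combine this with Brooks: $\alpha\geq n/\Delta$ for $G\neq K_{\Delta+1}$. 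The difficulty is that this only gives $a\leq n-1$, which is too weak.
\end{itemize}

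The better precise plan is to apply Brooks to $G[A]$ directly. Every vertex of $A$ has $G[A]$-degree at most $\Delta$. A component of $G[A]$ with maximum degree $\Delta$ that is not $K_{\Delta+1}$ is $\Delta$-colorable, which is not enough. So I need $\Delta(G[A])\leq\Delta-1$ on most of $A$.

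Vertices of $A$ that have no neighbour in $B$ and degree $\Delta$ are the problem. I would handle them by swapping: exchange such a vertex with a lower-degree vertex of $B$. Because $A$ consists of the $a$ smallest degrees, every vertex of $B$ has degree $\geq$ every degree in $A$. So if $A$ contains a vertex of degree $\Delta$, then every vertex of $B$ has degree $\Delta$. Then $\Delta|B|\geq\sum_A d\geq$ roughly, and $e(A)\leq e(B)$ strongly constrains the structure.

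Under this plan, a component of $G[A]$ in which all vertices have $G[A]$-degree $\Delta$ is a $\Delta$-regular component of $G$. Connectedness would make it all of $G$, contradicting $B\neq\emptyset$. Thus each component of $G[A]$ has some vertex of degree $<\Delta$. Then a degenerate-ordering greedy colouring, ending at that vertex, gives $\chi\leq\Delta$. This is still off by one.

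\emph{Main obstacle.} Getting $\Delta-1$ rather than $\Delta$ is the hard step. I would first try to prove $\chi(G[A])\leq\Delta-1$ by showing $\Delta(G[A])\leq\Delta-1$ after a suitable choice of $A$ among maximum annihilating sets, using the swapping argument. I would then apply Brooks in $G[A]$, treating $K_\Delta$ components and odd cycles (when $\Delta=3$) separately. For those exceptional components I would use the edge budget $e(A)\leq e(B)$ together with the forest/tree estimates (Theorem~\ref{thm:tree}) to show they cannot be too numerous relative to $\alpha$. In particular, each $K_\Delta$ component of $G[A]$ contributes only $1$ to $\alpha$ but uses $\binom{\Delta}{2}$ edges of the budget in $B$.
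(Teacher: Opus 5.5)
\textbf{There is a genuine gap in the case $\Delta\ge 3$.} Your $\Delta=2$ case and your check of $K_{\Delta+1}$ are correct. For $\Delta\ge3$, however, you reduce everything to the claim that some maximum annihilating set $A$ satisfies $\chi(G[A])\le\Delta-1$. For that claim you offer only a list of intended steps: swapping, Brooks inside $G[A]$, and counting $K_\Delta$ components against $e(A)\le e(B)$. None of these is carried out.

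The claim is also delicate, because it fails for some maximum annihilating sets. Take $K_4$ on $\{u,v,p,q\}$, subdivide $uv$ by a new vertex $w$, and join $w$ to a vertex $z$ of a triangle $xyz$. Then $\Delta=3$ and $m=11$. The sorted prefix sums are $2,4,7,10,13$, so $a=4$. The set $\{x,y,z,w\}$ has degree sum $10\le 11$, so it is a maximum annihilating set, and it induces a triangle. So the choice of $A$ is essential, and your swapping sketch gives no reason that a good choice always exists. What you actually establish, namely that no component of $G[A]$ is $\Delta$-regular and hence $\chi(G[A])\le\Delta$, yields only $a\le\Delta\alpha$.

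\textbf{The missing idea is to bound $a$ from the tail rather than the head.} This is what the paper does. Since $\sum_{v\in A}d(v)\le m$, the complement satisfies $\sum_{v\in B}d(v)\ge m$. Each tail degree is at most $\Delta$, so $m\le\Delta(n-a)$, that is, $a\le n-m/\Delta$. Your ``cruder counting'' used the head estimate $a\bar d_A\le m$, which is why it produced nothing useful.

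If $G$ is not a tree, then $m\ge n$, so $a\le(\Delta-1)n/\Delta$. Brooks' theorem applied to $G$ itself, not to $G[A]$, gives $\alpha\ge n/\Delta$. With $\Delta\ge3$ its only exception is $K_{\Delta+1}$, which you already checked. Hence $a\le(\Delta-1)\alpha$.

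If $G$ is a tree with $\Delta\ge3$, the paper uses Theorem~\ref{thm:tree} with Lemma~\ref{lem:KE_mu_le_alpha} to get $a\le(3\alpha-1)/2\le2\alpha\le(\Delta-1)\alpha$. More simply, bipartiteness gives $\alpha\ge n/2$, so $(\Delta-1)\alpha\ge n\ge a$.

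With this tail bound, your whole chromatic analysis of $G[A]$ becomes unnecessary.
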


\begin{proof}
Write $n=n(G)$, $m=m(G)$, $a=a(G)$, $\alpha=\alpha(G)$, and $\Delta=\Delta(G)$.  Since $G$ is connected and $n\geq3$, we have $\Delta\geq2$.  Let $\langle A,B\rangle$ be an annihilating decomposition of $G$.  Since $\sum_{v\in A}d(v)\leq m$, we have $\sum_{v\in B}d(v)\geq m$.  Hence
\[
        m\leq \sum_{v\in B}d(v)\leq \Delta |B|=\Delta(n-a),
\]
which gives
\begin{equation}\label{eq:a_nmDelta_case}
        a\leq n-\frac{m}{\Delta}.
\end{equation}

First suppose that $G$ is not a tree.  Then $m\geq n$.  If $G$ is neither a complete graph nor an odd cycle, Brooks' theorem gives $\chi(G)\leq \Delta$ \cite{Brooks1941}.  A coloring with at most $\Delta$ color classes has an independent color class of size at least $n/\Delta$; hence
\[
        \alpha\geq \frac{n}{\Delta}.
\]
Using \eqref{eq:a_nmDelta_case}, we obtain
\[
        a\leq n-\frac{m}{\Delta}
        \leq n-\frac{n}{\Delta}
        =\frac{(\Delta-1)n}{\Delta}
        \leq (\Delta-1)\alpha.
\]
If $G$ is an odd cycle, then $\Delta=2$ and $a=\alpha=(n-1)/2$, so the desired inequality holds.  If $G=K_{\Delta+1}$, then $\alpha=1$ and
\[
        a=\floor{\frac{\Delta+1}{2}}\leq \Delta-1
\]
for $\Delta\geq 2$, again giving $a\leq (\Delta-1)\alpha$.

Now suppose that $G$ is a tree.  If $\Delta=2$, then $G$ is a path and $a=\alpha=\ceil{n/2}$, so the result holds.  If $\Delta\geq 3$, Theorem \ref{thm:tree} and Lemma \ref{lem:KE_mu_le_alpha} give
\[
        a\leq \alpha+\frac{\mu(G)-1}{2}
        \leq \alpha+\frac{\alpha-1}{2}
        =\frac{3\alpha-1}{2}
        \leq 2\alpha
        \leq (\Delta-1)\alpha.
\]
This completes the proof.
\end{proof}

\begin{theorem}[An annihilating-decomposition proof of the TxGraffiti inequality]\label{thm:txgraffiti}
If $G$ is a connected graph with $n(G)\geq 3$, then
\[
        \alpha(G)\geq \frac{a(G)+\res(G)}{\Delta(G)}.
\]
\end{theorem}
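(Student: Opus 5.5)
The plan is to combine the two ingredients already prepared: Lemma~\ref{lem:a_delta_alpha}, which gives $a(G)\leq(\Delta(G)-1)\alpha(G)$ for connected $G$ with $n(G)\geq3$, and the residue inequality $\res(G)\leq\alpha(G)$ from Theorem~\ref{thm:res_le_alpha}. Adding them yields
\[
        a(G)+\res(G)\leq (\Delta(G)-1)\alpha(G)+\alpha(G)=\Delta(G)\alpha(G).
\]
Since $G$ is connected with $n(G)\geq3$, it contains a vertex of degree at least two. Hence $\Delta(G)\geq2>0$, and dividing by $\Delta(G)$ gives the claimed inequality.

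All the substantive work therefore lies in Lemma~\ref{lem:a_delta_alpha}, which is already proved; the theorem itself is a short consequence. The only points to check are that the hypotheses of that lemma coincide with those of the theorem, and that the division by $\Delta(G)$ is legitimate. Both follow immediately from connectedness together with $n(G)\geq3$.

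After the proof I would remark why each hypothesis is needed. For $K_2$ we have $\alpha=a=\res=\Delta=1$, so the right-hand side equals $2>1$. For the disconnected graph $C_3\cup K_2$ we have $\alpha=2$, $\Delta=2$, $m=4$, and degree sequence $1,1,2,2,2$, which gives $a=3$; the residue is $2$, so $(3+2)/2>2$. Both examples therefore violate the inequality. I would also record the equality cases $P_3$, $C_3$, $C_4$ and $K_4$ by direct computation, and check them against Corollary~\ref{cor:tx_equality_gupta}.
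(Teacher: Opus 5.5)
Your proof is correct and is essentially the paper's proof: add Lemma~\ref{lem:a_delta_alpha} to $\res(G)\leq\alpha(G)$, then divide by $\Delta(G)\geq 2$. Your checks that both hypotheses are needed ($K_2$ and $C_3\cup K_2$) and your equality witnesses also agree with Propositions~\ref{prop:tx_hypotheses_sharp} and~\ref{prop:tx_small_sharp}.
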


\begin{proof}
By Lemma \ref{lem:a_delta_alpha},
\[
        a(G)\leq (\Delta(G)-1)\alpha(G).
\]
By Theorem \ref{thm:res_le_alpha},
\[
        \res(G)\leq \alpha(G).
\]
Adding the two inequalities gives
\[
        a(G)+\res(G)
        \leq (\Delta(G)-1)\alpha(G)+\alpha(G)
        =\Delta(G)\alpha(G).
\]
Since $G$ is connected and $n(G)\geq 3$, we have $\Delta(G)\geq 2$.  Dividing by $\Delta(G)$ proves the theorem.
\end{proof}

\begin{proposition}[Necessity of the TxGraffiti hypotheses]\label{prop:tx_hypotheses_sharp}
Both hypotheses in Theorem \ref{thm:txgraffiti} are necessary.
\end{proposition}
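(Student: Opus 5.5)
The plan is to exhibit two explicit counterexamples: one connected graph of order two, and one disconnected graph of order at least three. Each violates the inequality of Theorem~\ref{thm:txgraffiti}. For each I will compute $\alpha$, $a$, $\res$ and $\Delta$ directly from the degree sequence and the Havel--Hakimi process.

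\emph{Necessity of $n(G)\geq 3$.} Take $G=K_2$, which is connected. Its degree sequence is $1,1$ with $m=1$, so $a(K_2)=1$ and $\alpha(K_2)=1$. For the residue, Havel--Hakimi applied to $(1,1)$ deletes one $1$ and reduces the other to $0$, leaving $(0)$, so $\res(K_2)=1$. Since $\Delta=1$, the right-hand side is $(1+1)/1=2>1=\alpha$, and the inequality fails.

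\emph{Necessity of connectedness.} Take $G=C_3\cup K_2$, which has order five. The degree sequence is $1,1,2,2,2$ with $m=4$. The prefix sums are $1,2,4,6$, so $a(G)=3$. Clearly $\alpha(G)=1+1=2$. For the residue, run Havel--Hakimi on $(2,2,2,1,1)$:
\begin{itemize}
\item remove a $2$ and reduce the next two entries, giving $(1,1,1,1)$;
\item remove a $1$, giving $(0,1,1)$, which sorts to $(1,1,0)$;
\item remove a $1$, giving $(0,0)$.
\end{itemize}
Hence $\res(G)=2$. With $\Delta=2$, the right-hand side is $(3+2)/2=5/2>2=\alpha(G)$, so the inequality fails.

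The only delicate step is computing the residue correctly. Since $\res$ is well defined regardless of tie-breaking, I will follow the standard sorted (nonincreasing) procedure. It is also worth sanity-checking against $\res\leq\alpha$: here $\res=\alpha$ in both examples. This is expected, because failure needs $a+\res>\Delta\alpha$. In the connected case the failure is forced by $\Delta=1$. In the disconnected case it comes from the $K_2$ component: that component contributes to $a$ and $\alpha$, but it lowers the effective degree relative to $\Delta=2$. This is the mechanism that Lemma~\ref{lem:a_delta_alpha} excludes for connected graphs.
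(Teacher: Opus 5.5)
Your proposal is correct and matches the paper's proof: both use the same two counterexamples, $K_2$ for the order hypothesis and $C_3\cup K_2$ for connectedness, with the same values $(\alpha,a,\res,\Delta)=(1,1,1,1)$ and $(2,3,2,2)$ respectively. Your explicit prefix-sum and Havel--Hakimi computations simply spell out the values that the paper states directly.
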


\begin{proof}
The order assumption cannot be removed.  For $K_2$,
\[
        \alpha(K_2)=1,
        \qquad
        a(K_2)=1,
        \qquad
        \res(K_2)=1,
        \qquad
        \Delta(K_2)=1.
\]
Thus
\[
        \frac{a(K_2)+\res(K_2)}{\Delta(K_2)}=2>1=\alpha(K_2).
\]

Connectedness cannot be removed either.  Let $G=C_3\cup K_2$.  Then
\[
        \alpha(G)=2,
        \qquad
        a(G)=3,
        \qquad
        \res(G)=2,
        \qquad
        \Delta(G)=2.
\]
Therefore,
\[
        \frac{a(G)+\res(G)}{\Delta(G)}=\frac52>2=\alpha(G).
\]
This proves both claims.
\end{proof}

\begin{proposition}[Small sharp witnesses]\label{prop:tx_small_sharp}
The inequality in Theorem \ref{thm:txgraffiti} is attained by several connected graphs, including $P_3$, $C_3$, $C_4$, and $K_4$.
\end{proposition}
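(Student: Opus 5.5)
The plan is a direct verification. For each of the four graphs I will compute the four invariants $\alpha$, $a$, $\res$ and $\Delta$ and check that $\Delta(G)\alpha(G)=a(G)+\res(G)$. All four graphs are connected and have order at least three, so Theorem~\ref{thm:txgraffiti} applies to each, and equality is exactly the statement that this identity holds.

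The annihilation number comes from the sorted-degree definition: add the smallest degrees until the partial sum would exceed $m$.
\begin{itemize}
\item For $P_3$ the degrees are $1,1,2$ and $m=2$, so $a=2$.
\item For $C_3$ the degrees are $2,2,2$ and $m=3$, so $a=1$.
\item For $C_4$ the degrees are $2,2,2,2$ and $m=4$, so $a=2$.
\item For $K_4$ the degrees are $3,3,3,3$ and $m=6$, so $a=2$.
\end{itemize}
The independence numbers are standard: $\alpha(P_3)=\alpha(C_4)=2$ and $\alpha(C_3)=\alpha(K_4)=1$. The maximum degrees are $2,2,2,3$ respectively.

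The residue is obtained by running the Havel--Hakimi reduction to termination and counting the remaining zeros.
\begin{itemize}
\item $(2,1,1)\to(0,0)$, so $\res(P_3)=2$.
\item $(2,2,2)\to(1,1)\to(0)$, so $\res(C_3)=1$.
\item $(2,2,2,2)\to(2,1,1)\to(0,0)$, so $\res(C_4)=2$.
\item $(3,3,3,3)\to(2,2,2)\to(1,1)\to(0)$, so $\res(K_4)=1$.
\end{itemize}
As a sanity check, each value satisfies $\res\le\alpha$ (Theorem~\ref{thm:res_le_alpha}), and in fact $\res=\alpha$ in every case.

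Substituting gives $(2+2)/2=2$, $(1+1)/2=1$, $(2+2)/2=2$ and $(2+1)/3=1$. These equal $\alpha$ in each case, so equality holds.

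For the three graphs with $\Delta=2$ this agrees with Corollary~\ref{cor:tx_equality_gupta}, since there $a=\alpha$ and $\res=\alpha$. For $K_4$ the same corollary requires $\Delta=3$, $W=\res=\alpha$ and $\sigma_\Delta=0$. Indeed $W(K_4)=4\cdot\tfrac14=1$. For $\sigma_\Delta$:
\begin{itemize}
\item the first term vanishes because $S_H=6=3(4-2)=\Delta(n-a)$;
\item the head terms vanish because the head degrees equal $\Delta$;
\item the tail terms vanish because the tail degrees equal $\Delta$.
\end{itemize}
This cross-check is optional.

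There is no real obstacle. The only step needing care is applying the Havel--Hakimi reduction correctly, in particular re-sorting after each deletion (as in the $C_4$ case).
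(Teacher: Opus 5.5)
Your verification is correct and matches the paper's proof: the paper also tabulates $\alpha$, $a$, $\res$ and $\Delta$ for $P_3$, $C_3$, $C_4$ and $K_4$, with exactly the values you compute, and checks $\Delta(G)\alpha(G)=a(G)+\res(G)$ row by row. Your optional cross-check of the $K_4$ case against Corollary~\ref{cor:tx_equality_gupta} is a consistent extra that the paper does not include.
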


\begin{proof}
The required values are obtained directly from the definitions and from the Havel--Hakimi reduction.  For example,
\[
\begin{array}{c|cccc}
G&\alpha(G)&a(G)&\res(G)&\Delta(G)\\ \hline
P_3&2&2&2&2\\
C_3&1&1&1&2\\
C_4&2&2&2&2\\
K_4&1&2&1&3
\end{array}
\]
In each row, $\Delta(G)\alpha(G)=a(G)+\res(G)$.
\end{proof}

\begin{remark}
This section illustrates the intended role of automated conjecturing.  TxGraffiti supplied a compact, human-readable, table-true inequality involving three independently studied invariants.  The proof above uses neither the finite table nor optimization; the table output instead identifies the correct combination of invariants, while the theorem follows from annihilating decompositions, Brooks' theorem, the forest bound, and the residue inequality.
\end{remark}

\subsection*{Comparison with Gupta's Caro--Wei proof}
Gupta's proof of the same TxGraffiti inequality is stronger at the degree-sequence level: it proves
\[
        a(G)\leq \frac{\Delta(G)+1}{2}W(G),
\]
where $W(G)=\sum_{v\in V(G)}1/(d(v)+1)$ is the Caro--Wei sum \cite{Gupta2026}.  Together with the standard bounds recalled in Theorem~\ref{thm:res_le_alpha}, this immediately yields the TxGraffiti inequality for connected graphs of maximum degree at least three; maximum degree two is then handled directly.  Corollary~\ref{cor:gupta_dominates_tx} records a useful consequence: for $\Delta\geq3$, the TxGraffiti lower bound $(a+\res)/\Delta$ is dominated by the older residue bound $\res$.

Our proof is complementary.  It does not use the Caro--Wei sum; instead, it derives the coarser but structural estimate
\[
        a(G)\leq(\Delta(G)-1)\alpha(G)
\]
from annihilating decompositions and the matching-number theory developed above.  Theorem~\ref{thm:refined_gupta} extracts the slack hidden in Gupta's argument and turns it into the exact identity
\[
        a(G)+\sigma_\Delta(G)=\frac{\Delta(G)+1}{2}W(G).
\]
Combining this identity with our exact matching theorem gives the combined bracket in Corollary~\ref{cor:combined_bracket} and the Gupta--residue annihilation-gap bound in Corollary~\ref{cor:gupta_residue_gap}, while Corollary~\ref{cor:gupta_dominates_tx} quantifies exactly how far the TxGraffiti lower bound lies below the residue bound when $\Delta(G)\geq3$.

\section{Concluding remarks}\label{section:conclusion}
We have given a sharp matching-number theory for the annihilation gap $a(G)-\alpha(G)$ in several natural graph classes.  The arbitrary-graph theorem gives a closed exact bound in terms of $\mu(G)$, while the forest, bipartite, and K\"onig--Egerv\'ary theorems explain how the extremal value changes under classical structure.  The TxGraffiti section then shows that these annihilation methods give an independent structural proof of a machine-generated inequality involving the annihilation number, the Havel--Hakimi residue, and maximum degree, complementing the recent Caro--Wei proof of Gupta.  Combining the two approaches yields the bracket in Corollary~\ref{cor:combined_bracket} and the Gupta--residue gap bound in Corollary~\ref{cor:gupta_residue_gap}, while Corollary~\ref{cor:gupta_equality} identifies equality in Gupta's Caro--Wei annihilation bound by explicit degree-sequence conditions and Corollary~\ref{cor:tx_equality_gupta} characterizes equality in the TxGraffiti inequality in terms of the same slack.  Thus the slack term $\sigma_\Delta(G)$ measures exactly the loss in Gupta's inequality and relates the computable quantities $\res(G)$, $W(G)$, $a(G)$, $\mu(G)$, and $\Delta(G)$ to $\alpha(G)$.

The equality cases of the two principal estimates are now also isolated at the level needed for the present paper.  Theorem~\ref{thm:general_equality_classification} gives a matching-cover defect criterion for equality in the exact arbitrary-graph matching bound: for $\mu\geq3$, equality is equivalent to one of two parameter alternatives with defect zero or defect one.  Theorem~\ref{thm:bipartite_connected_classification} gives a structural template for the connected bipartite real-equality case, including the precise role of the cross-edges.  A full isomorphism-level enumeration of the possible defect-zero/defect-one decompositions and of the possible bipartite cross-edge patterns is a natural sequel problem, but it is no longer needed for the sharp inequalities themselves.  Finally, forbidding cliques leads to a different fixed-matching extremal theory; that direction is pursued in a separate sequel focused on the exact $K_4$-free function, matched cores, and bounded blow-up type graphs.

The sequel also gives useful context for the scope of the present methods.  Once a maximum matching is fixed, its $K_4$-free candidates are encoded by finite matched cores and bounded blow-ups of type graphs.  A fixed-defect form of the Tutte--Berge maximum-matching formula \cite{Tutte1947,Berge1958}, together with a Gallai-defect viewpoint \cite{Gallai1959}, organizes the sequel's frontier obstructions.  These observations are not needed for the proofs above, but they explain why the unrestricted extremal construction in Theorem~\ref{thm:general_exact} is clique-driven, while forbidding $K_4$ leads to a different reservoir-and-blow-up theory.  In that sequel, the Turan--Zykov viewpoint and bounded-matching Turan input \cite{AlonFrankl2024,Mantel1907,Turan1941,Zykov1949} enter only after the matching-core structure has been isolated.

\appendix
\section{Computational reproducibility notes}\label{appendix:computations}
The computations in this appendix are not used in the proofs.  They are included to make the boundary cases reproducible.  For $1\leq \mu\leq 17$, the exact arbitrary-graph value
\[
        \Gamma(\mu)=2\mu+1-\ceil{\sqrt{6\mu}}
\]
is
\[
\begin{array}{c|ccccccccccccccccc}
\mu&1&2&3&4&5&6&7&8&9&10&11&12&13&14&15&16&17\\ \hline
\Gamma(\mu)&0&1&2&4&5&7&8&10&11&13&14&16&18&19&21&23&24
\end{array}
\]
The following minimal script computes $a(G)$ from the degree sequence and verifies the formula against any supplied finite graph list.
\begin{verbatim}
def annihilation_number(G):
    deg = sorted(dict(G.degree()).values())
    m = G.number_of_edges()
    total = ans = 0
    for d in deg:
        if total + d <= m:
            total += d
            ans += 1
        else:
            break
    return ans
\end{verbatim}

\section*{Declarations}
The authors declare that they have no conflict of interest.

\end{document}